\newtheorem{thm}{Theorem}[section]
\newtheorem{lem}[thm]{Lemma}
\newtheorem{prop}[thm]{Proposition}
\newtheorem{cor}[thm]{Corollary}
\newtheorem{deflemma}[thm]{Definition/Lemma}
\theoremstyle{definition}
\newtheorem{defn}[thm]{Definition}
\newtheorem{notn}[thm]{Notation}
\theoremstyle{remark}
\newtheorem{rmk}[thm]{Remark}
\newtheorem{ex}[thm]{Example}
\newcommand{\cat}[1]{\mathbf{#1}} 
\newcommand{\Kappa}{\mathrm{K}}
\newcommand{\Iota}{\mathrm{I}}
\newcommand{\Mu}{\mathrm{M}}
\newcommand{\eps}{\varepsilon}
\newcommand{\R}{\mathbb{R}}
\newcommand{\abs}[1]{\lvert#1\rvert}
\newcommand{\incl}{\hookrightarrow}
\DeclareMathOperator{\colim}{colim}
\begin{document}


\title{Interleaving and Gromov-Hausdorff distance}

\author{Peter Bubenik}
\address[Peter Bubenik]{Department of Mathematics, University of Florida}
\email{peter.bubenik@ufl.edu}

\author{Vin de Silva}
\address[Vin de Silva]{Department of Mathematics, Pomona College}
\email{Vin.deSilva@pomona.edu}

\author{Jonathan Scott}
\address[Jonathan Scott]{Department of Mathematics, Cleveland State University}
\email{j.a.scott3@csuohio.edu}

\maketitle


\begin{abstract}
One of the central notions to emerge from the study of persistent homology is that of \emph{interleaving distance}. It has found recent applications in symplectic and contact geometry, sheaf theory, computational geometry, and phylogenetics. Here we present a general study of this topic. We define interleaving of  functors with common codomain as solutions to an extension problem. In order to define interleaving distance in this setting we are led to categorical generalizations of Hausdorff distance, Gromov-Hausdorff distance, and the space of metric spaces. We obtain comparisons with previous notions of interleaving via the study of future equivalences. As an application we recover a definition of shift equivalences of discrete dynamical systems. 
\end{abstract}



\section{Introduction}

Persistent homology~\cite{elz:tPaS,zomorodianCarlsson:computingPH} has been a highly successful tool in applied topology~\cite{ghrist:survey,carlsson:topologyAndData}. Consequently, it has been a focus of research -- see for example, the recent book~\cite{oudot:book}.
Furthermore, the use of persistent homology and the related interleaving distance in pure and applied mathematics in increasingly sophisticated ways (see Section~\ref{sec:uses} below) has inspired the present work,
which develops a general theory of interleaving and interleaving distance.

\subsection{Persistence modules} 

The central algebraic object of study in this subject is the \emph{persistence module}, which may be viewed as a functor $M$ from $\cat{R}$, the poset $(\R,\leq)$ of real numbers with the usual order considered as a category, to the category $\cat{Vect}$, of vector spaces over a fixed field $K$ and $K$-linear maps. That is, for each real number $a$, we have a vector space $M(a)$ and for each $a \leq b$ we have a linear map $M(a\leq b)$ from $M(a)$ to $M(b)$ such that $M(a \leq a)$ is the identity map and $M(b \leq c) \circ M(a \leq b) = M(a \leq c)$. 
For example, if $f:X \to \R$ is a real-valued function on a topological space, then there is a persistence module given by $F(a) = H_i(f^{-1}(-\infty,a];K)$, where $H_i(-;K)$ is singular homology in degree $i$ with coefficients in the field $K$ and $F(a\leq b)$ is the linear map induced by inclusion.

\subsection{Interleaving and interleaving distance}
\label{sec:interleaving}

Given two such persistence modules, $M,N: \cat{R} \to \cat{Vect}$, how do we compare them? A standard answer is to specify when they are isomorphic: if for all $a\in \R$ we have comparison maps $\varphi(a): M(a) \to N(a)$ and $\psi: N(a) \to M(a)$ such that all the comparison maps and maps given by the persistence modules collectively commute. However, in an applied setting where experimental noise may affect $M$ and $N$ this is hopelessly restrictive. Instead, consider the following more flexible notion. Given $\eps \geq 0$, we say that $M$ and $N$ are \emph{$\eps$-interleaved} if for each $a\in \R$ there exist comparison maps  $\varphi(a): M(a) \to N(a + \eps)$ and $\psi: N(a) \to M(a + \eps)$ such that all the comparison maps and maps given by the persistence modules collectively commute. The infimum of all epsilon for which there exists such an interleaving is called the \emph{interleaving distance}.
This distance was first defined and studied in~\cite{ccsggo:interleaving}.

\subsection{Uses of interleaving distance}
\label{sec:uses}

One can define generalized persistence modules with indexing categories other than $\cat{R}$ and define interleaving and interleaving distance in this setting~\cite{bdss:1}.
Here we give a brief outline of some recent uses of interleaving distance. Note that there is some overlap between the subjects below.

\subsubsection*{Symplectic geometry}

In~\cite{Polterovich:2016}, the authors apply persistence modules to filtered Floer homology and relate interleaving distance to Hofer's metric.
With these tools they
find robust obstructions to representing a Hamiltonian diffeomorphism as a full $k$-th power and to including it into a one-parameter subgroup.
Also using persistence, in~\cite{Zhang:2016} it is shown the
Hofer distance from a time-dependent Hamiltonian diffeomorphism to the set of $k$-th power Hamiltonian diffeomorphisms can be arbitrarily large in the product structure of any closed symplectic manifold and a closed oriented surface of genus at least four when $k$ is sufficiently large.
In~\cite{Usher:2016}, the authors define persistent homology and interleaving for Novikov's Morse theory for closed one-forms and in Floer theory on not-necessarily monotone symplectic manifolds.
Most recently, in~\cite{Polterovich:2017}, the authors define persistence modules with operators and an associated interleaving distance.
We remark that these persistence modules with operators may be viewed as generalized persistence modules.

\subsubsection*{Contact geometry}

In~\cite{Alves:2017}, the authors apply interleaving distance to wrapped Floer homology to study its algebraic growth. They use these results to construct a large class of contact manifolds on which all Reeb flows have chaotic dynamics.

\subsubsection*{Sheaf and cosheaf theory}

Sheaves and cosheaves are important examples of generalized persistence modules, with indexing category given by the open sets in some topological space. If this space has a metric, then one can study their interleaving distance.
This has been carefully studied in~\cite[Section 15]{curry:thesis},\cite{Curry:2015},\cite{Curry:2016}, and is also briefly mentioned in~\cite{bdss:1}.

\subsubsection*{Microlocal sheaf theory}

Let $K$ denote a fixed field. In~\cite{Kashiwara:2017}, the authors consider the abelian category of sheaves of $K$-vector spaces on a real vector space and its bounded derived category, from the viewpoint of persistent homology. They define a convolution distance which is an interleaving distance.
Compare with~\cite{curry:thesis}.
Following on this work, in~\cite{Asano:2017}, the authors define an interleaving distance on Tamarkin's category, which is a quotient on the  bounded derived category of sheaves of $K$-vector spaces on $M \times \mathbb{R}$, where $M$ is a connected manifold. They prove that the distance between an object and its Hamiltonian deformation is at most the Hofer norm of the Hamiltonian function.

\subsubsection*{Persistent homology}

Interleaving distance was carefully studied in the context of multiparameter multiparameter persistence modules~\cite{Lesnick:2015}. Follow up work includes \cite{Bauer:2015}, \cite{Botnan:2016} and a recent study of a homotopy-invariant analogue~\cite{Blumberg:2017}.
A categorical perspective of interleaving was taken in~\cite{bubenikScott:1}, which was then considerably generalized in~\cite{bdss:1}. See~\cite{deSilva:2017} for a recent extension of this work.

\subsubsection*{Computational geometry}

Reeb graphs, Reeb spaces, merge trees, the mapper and the multiscale mapper are computational tools for simplifying topological spaces. For example, given a continuous map $f: X \to Y$, the Reeb space is the quotient of $X$ obtained by identifying points in path components of fibers of $f$.
Given a map $f: X \to Y$ and an open cover of $Y$, the mapper is the nerve of the path components of the pullback of the cover.
Interleaving has been used to measure distances between these spaces \cite{Morozov:2013,deSilva:2016,Dey:2016,Carriere:2017,Dey:2017,Bauer:2018,bdss:1}
and to show that the mapper converges to the Reeb space~\cite{Munch:2015}.

\subsubsection*{Phylogenetics}

Phylogenetic trees are edge-weighted trees that represent the evolutionary relationships between biological species.
It has been recently shown that the cophonetic metric on phylogenetic trees~\cite{Cardona2013} can be viewed as an interleaving distance~\cite{Munch:2018}.

\subsection{A new Gromov-Hausdorff approach to interleaving}
\label{sec:new-approach}

To further develop the theory of interleaving and to facilitate its use in a greater variety of settings we present the following more general and conceptually simpler view. 

Given persistence modules $M,N: \cat{R} \to \cat{Vect}$,  an $\eps$-interleaving (as defined above) is exactly a solution to the following extension problem.
\begin{equation*} 
	\begin{tikzcd}[row sep=tiny]
		& \cat{I_{\eps}} \arrow[dd,dashed]	\\
		\cat{R} \arrow[ur,hook,"i_1"] \arrow[dr,"M"'] & & \cat{R} \arrow[ul,hook',"i_2"'] \arrow[dl,"N"] \\
		& \cat{Vect}
	\end{tikzcd}
\end{equation*}
Here $\cat{I_{\eps}}$ is the category given by the poset $(\R \amalg \R,\leq_{\eps})$ where if $a$ and $b$ are in the same copy of $\R$ then $a \leq_{\eps} b$ if and only if $a \leq b$ and if $a$ and $b$ are in different copies of $\R$ then $a \leq_{\eps} b$ if and only if $\eps \leq b-a$.
The functors $i_1$ and $i_2$ are given by inclusion of the respective posets.

In fact, this extension problem has a solution if and only if there is an extension to the category $\cat{J_{\eps}}$ given by the larger poset $([0,\eps] \times \R, \leq)$ where the inclusions are given by the two components of the boundary and $(x,a) \leq (y,b)$ if and only if $\abs{x-y} \leq b-a$. That an $\eps$-interleaving can be extended to $\cat{J_{\eps}}$ is known as \emph{interpolation}, and was first proved in~\cite{ccsggo:interleaving} and generalized in~\cite{bdsn}.

We will say that $M$ and $N$ are $\cat{I_{\eps}}$-interleaved and $\cat{J_{\eps}}$-interleaved and that the embeddings of $i_1$ and $i_2$ in both cases have \emph{weight} $\eps$. Thus the interleaving distance between $M$ and $N$ is at most $\eps$.

Now for the general picture. Let $\cat{C}$ be a category, and let $F:\cat{P} \to \cat{C}$ and $G:\cat{Q} \to \cat{C}$ be functors, where $\cat{P}$ and $\cat{Q}$ are small categories. We say that $F$ and $G$ are \emph{$\cat{I}$-interleaved} if there exists a small category $\cat{I}$, embeddings $i: \cat{P} \incl \cat{I}$ and $j: \cat{Q} \incl \cat{I}$, and a functor $H:\cat{I} \to \cat{C}$ making the diagram commute.
\[
	\begin{tikzcd}[row sep=tiny]
		& \cat{I} \arrow[dd,"H"]	\\
		\cat{P} \arrow[ur,hook,"i"] \arrow[dr,"F"'] & & \cat{Q} \arrow[ul,hook',"j"'] \arrow[dl,"G"] \\
		& \cat{C}
	\end{tikzcd}
\]

In order to define interleaving distance, we need to associate a number to such an interleaving. 
We will require $\cat{P}$, $\cat{Q}$ and $\cat{I}$ to be categorical versions of metric spaces (Definition~\ref{defn:weighted-category}). Then this number will be a categorical version of the Hausdorff distance between the images of $\cat{P}$ and $\cat{Q}$ in $\cat{I}$ (Definition~\ref{def:weight-embedding}). 
Finally, the interleaving distance between $F$ and $G$ (Definition~\ref{def:interleaving-distance}) will use a categorical version of Gromov-Hausdorff distance between $\cat{P}$ and $\cat{Q}$ (Definition~\ref{def:gh-distance}).

\subsection{Categorification of metric spaces}

In order to quantify interleavings, we will work with categories in which each of the morphisms has a number, called its \emph{weight}. Furthermore, we will require these weights to be compatible with composition in the category, providing a triangle inequality. We describe such \emph{weighted categories} in detail in Section~\ref{sec:wcat}.
As special cases, we have metric spaces, and the more general, Lawvere metric spaces (see Definition~\ref{defn:lawvere}).
For example, in $\cat{R}$ the weight of $a \leq b$ is $b-a$, and in $\cat{I_{\eps}}$ the weight of $(a,i) \leq_{\eps} (b,j)$ for $i,j \in \{0,1\}$ is $b-a$.

\subsection{Categorification of Hausdorff distance and Gromov-Hausdorff distance} \label{sec:cat-gh-intro}

The standard definition of Hausdorff distance in metric spaces (Definition~\ref{def:hausdorff-metric}) extends verbatim to Lawvere metric spaces (Definition~\ref{def:hausdorff-lawvere}).
For two objects in a weighted category, we can take the infimum of the weights of the morphisms between them to obtain a Lawvere metric space.
We define Hausdorff distance in a weighted category to be the Hausdorff distance in the corresponding Lawvere metric space (Definition~\ref{def:weight-embedding}).
For example, the Hausdorff distance between the two copies of $\cat{R}$ in $\cat{I_{\eps}}$ is $\eps$.

So for pairwise embeddings of weighted categories, $\cat{P} \hookrightarrow \cat{I} \hookleftarrow \cat{Q}$, we have the Hausdorff distance between the images of $\cat{P}$ and $\cat{Q}$. Taking the infimum over all such pairwise embeddings, we arrive at the  Gromov-Hausdorff distance between $\cat{P}$ and $\cat{Q}$ (Definition~\ref{def:gh-distance}).
The interleaving distance between functors $F:\cat{P} \to \cat{C}$ and $G:\cat{Q} \to \cat{C}$ where $\cat{P}$ and $\cat{Q}$ are weighted categories is given by the infimum over pairwise embeddings for which there exists an extension (Definition~\ref{def:interleaving-distance}).
For example, if persistence modules $M$ and $N$ are $\cat{I_{\eps}}$-interleaved, then their interleaving distance is at most $\eps$.

\subsection{Categorification of the space of metric spaces}

To show that interleaving distance satisfies the triangle inequality (Theorem~\ref{thm:interleaving-lawvere}), we define and study a bicategory of pairwise embeddings (Section~\ref{sec:embedding-pair}) and a bicategory of pairwise weighted embeddings which we call the \emph{Gromov-Hausdorff bicategory} (Section~\ref{sec:gromov}). As a consequence of this structural theory, we are able to show  that the Gromov-Hausdorff distance between weighted categories and the interleaving distance between weighted functors with fixed codomain
satisfy the triangle inequality -- in fact, both this class of weighted categories, and this class of functors are Lawvere metric spaces (Theorems \ref{thm:gh-lawvere} and \ref{thm:interleaving-lawvere}).
Furthermore, interleaving and interleaving distance are \emph{stable} (Theorems \ref{thm:I-stability} and \ref{thm:interleaving-stability}).

\subsection{Comparison with previous notions of interleaving}

To compare these notions with the interleavings presented in~\cite{bdss:1}, we consider the future equivalences studied by Marco Grandis~\cite{grandis:2005,grandis:2007} and show that they too can be assembled into a bicategory (Section~\ref{sec:fut}). 
Following our earlier development, this can be extended to weighted categories, giving another distance, the \emph{future equivalence distance}, between weighted categories and between functors indexed by weighted categories with fixed codomains (Section~\ref{sec:wfut}). These distances also satisfy the triangle inequality and the corresponding classes of weighted categories and classes of functors are Lawvere metric spaces (Theorems \ref{thm:fut-lawvere} and \ref{thm:fut-functor-lawvere}). Finally we show that these two approaches are compatible (Section~\ref{sec:fut-to-emb}). That is, from a future equivalence we may construct an equivalent interleaving (Proposition~\ref{prop:fut-emb}). We obtain a full and faithful functor from the category of future equivalences between categories $\cat{P}$ and $\cat{Q}$ to the category of pairwise embeddings of $\cat{P}$ and $\cat{Q}$ (Theorem~\ref{thm:phi-full-faithful}).
Furthermore, these assemble to provide a functor of bicategories from the bicategory of future equivalences to the bicategory of pairwise embeddings (Section~\ref{sec:functor-bicategories}).
Using this theory we see that the future equivalence distance provides an upper bound for the Gromov-Hausdorff distance and the interleaving distance (Theorems \ref{thm:gh-vs-fut} and \ref{thm:interleaving-vs-fut}).

\subsection{Connections with previous work in persistent homology}
\label{sec:conn-with-ph}

In this section we show how our view of interleaving can be used to help understand some constructions in the persistent homology literature.

\subsubsection*{Interleaving and interleaving distance}

Let $M,N: \cat{R} \to \cat{Vect}$ be two persistence modules and let $\eps \geq 0$.
As observed in Section~\ref{sec:interleaving}, $M$ and $N$ are $\eps$-interleaved if and only if they are $\cat{I_{\eps}}$-interleaved.
The interleaving distance of $M$ and $N$, denoted $d(M,N)$, is defined by setting $d(M,N) \leq \eps$ if and only if $M$ and $N$ are $\eps'$-interleaved for all $\eps' > \eps$.
Using our categorical notion of interleaving, we are able to provide a more succinct definition.

Consider the category $\cat{I_{\eps^+}}$ given by the poset $(\R \amalg \R,\leq_{\eps^+})$, where if $a$ and $b$ are in the same copy of $\R$ then $a\leq_{\eps^+} b$ if and only if $a \leq b$ and if $a$ and $b$ are in different copies of $\R$ then $a \leq_{\eps^+} b$ if and only if $\eps < b-a$.

\begin{lem} \label{lemma:interleaving}
  $d(M,N) \leq \eps$ if and only if $M$ and $N$ are $\cat{I_{\eps^+}}$-interleaved. That is, we have the following commutative diagram.
\begin{equation} \label{cd:interleaving-dist}
	\begin{tikzcd}[row sep=tiny]
		& \cat{I_{\eps^+}} \arrow[dd,dashed]	\\
		\cat{R} \arrow[ur,hook,"i_1"] \arrow[dr,"M"'] & & \cat{R} \arrow[ul,hook',"i_2"'] \arrow[dl,"N"] \\
		& \cat{Vect}
	\end{tikzcd}
\end{equation}
\end{lem}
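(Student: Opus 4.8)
The plan is to prove the two implications of the biconditional separately: $(\Leftarrow)$ is routine and $(\Rightarrow)$ carries the content. Throughout I use that, by definition, $d(M,N)\le\eps$ means that $M$ and $N$ are $\eps'$-interleaved, equivalently $\cat{I_{\eps'}}$-interleaved, for every $\eps'>\eps$.

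For $(\Leftarrow)$, suppose a functor $H\colon\cat{I_{\eps^+}}\to\cat{Vect}$ realizes an $\cat{I_{\eps^+}}$-interleaving, so $Hi_1=M$ and $Hi_2=N$. For each $\eps'>\eps$ the identity on the underlying set $\R\amalg\R$ is order preserving as a map $\cat{I_{\eps'}}\to\cat{I_{\eps^+}}$: within a single copy of $\R$ the two relations coincide, and across copies $\eps'\le b-a$ implies $\eps<b-a$ because $\eps'>\eps$. This functor commutes with $i_1$ and $i_2$, so postcomposing it with $H$ yields an $\cat{I_{\eps'}}$-interleaving of $M$ and $N$. Hence $M$ and $N$ are $\eps'$-interleaved for every $\eps'>\eps$, i.e.\ $d(M,N)\le\eps$.

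For $(\Rightarrow)$, fix a strictly decreasing sequence $\eps_1>\eps_2>\cdots$ with $\eps_n\to\eps$ and every $\eps_n>\eps$. Since the across-copies relation $\le_{\eps'}$ weakens as $\eps'$ decreases, there is a tower of subcategory inclusions $\cat{I_{\eps_1}}\subseteq\cat{I_{\eps_2}}\subseteq\cdots$, each the identity on objects and each compatible with $i_1$ and $i_2$; and since $b-a>\eps$ holds precisely when $\eps_n\le b-a$ for some $n$, we get $\cat{I_{\eps^+}}=\bigcup_n\cat{I_{\eps_n}}=\colim_n\cat{I_{\eps_n}}$. Any two composable morphisms of $\cat{I_{\eps^+}}$ already lie in a common $\cat{I_{\eps_n}}$, so specifying a functor $H\colon\cat{I_{\eps^+}}\to\cat{Vect}$ with $Hi_1=M$ and $Hi_2=N$ is the same as specifying a compatible tower of $\cat{I_{\eps_n}}$-interleavings $H_n$, meaning interleavings with $H_{n+1}|_{\cat{I_{\eps_n}}}=H_n$. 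Since $d(M,N)\le\eps$, each $\cat{I_{\eps_n}}$ admits at least one interleaving of $M$ and $N$, so the remaining task is to choose these coherently; given a coherent tower, gluing it to $H$ and checking $Hi_1=M$, $Hi_2=N$ is formal and produces the commutative diagram~\eqref{cd:interleaving-dist}.

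The coherent tower is to be built by induction, and this is where I expect the genuine difficulty. Every cross-morphism $a\to b$ of $\cat{I_{\eps_{n+1}}}$ factors through the minimal one $a\to a+\eps_{n+1}$, so $H_{n+1}$ is pinned down on cross-morphisms by its components $\varphi_{n+1}(a)\colon M(a)\to N(a+\eps_{n+1})$ and $\psi_{n+1}(a)\colon N(a)\to M(a+\eps_{n+1})$; compatibility with $H_n$ then forces these to lift the corresponding components of $H_n$ along the structure maps $N(a+\eps_{n+1}\le a+\eps_n)$ and $M(a+\eps_{n+1}\le a+\eps_n)$, and in addition naturality and the two triangle identities of an interleaving must hold. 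The obstacle is that the mere existence of \emph{some} $\cat{I_{\eps_{n+1}}}$-interleaving does not let us extend the particular $H_n$ already constructed --- a short interval-module computation shows the restriction maps between sets of interleavings are not surjective in general --- so $H_n$ cannot be chosen arbitrarily. I would handle this by passing to the inverse system whose terms are the sets of $\cat{I_{\eps_n}}$-interleavings of $M$ and $N$ and whose maps are restriction (these maps are well defined, by a direct check using naturality and the triangle identities), and showing its inverse limit is nonempty --- for instance via a Mittag-Leffler argument on the eventual images of the restriction maps, or by invoking the interpolation theorem to obtain the room needed to perform the lifts. Any thread of that inverse limit provides a coherent tower, which would complete the proof.
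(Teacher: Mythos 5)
Your $(\Leftarrow)$ direction is correct and is essentially the paper's: for each $\eps'>\eps$ the category $\cat{I_{\eps'}}$ sits inside $\cat{I_{\eps^+}}$, and restricting the extension along this inclusion gives an $\eps'$-interleaving. For $(\Rightarrow)$ you also follow the paper's route --- write $\cat{I_{\eps^+}}=\colim_{\eps'>\eps}\cat{I_{\eps'}}$ and map out of the colimit --- and you correctly identify that the crux is producing a \emph{compatible} cocone: a functor on the union is exactly a coherent tower of interleavings, and the existence of some interleaving at each level $\eps_n$ does not by itself supply one, since (as your interval-module remark rightly indicates) the restriction maps between the sets of interleavings need not be surjective. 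The problem is that you stop exactly there. ``Show the inverse limit of interleaving sets is nonempty, via Mittag--Leffler or via interpolation'' is a plan, not a proof, and neither suggestion is routine: the set of $\eps_n$-interleavings is an affine subset of a typically infinite-dimensional product of Hom-spaces, a countable decreasing intersection of nonempty affine subspaces can be empty, and you verify no stabilization of the eventual images; meanwhile the interpolation theorem, applied to a single $\eps_n$-interleaving, yields compatible $\delta$-interleavings only for $\delta\ge\eps_n$ --- the easy upward direction --- not the downward coherence as $\delta\downarrow\eps$ that you actually need. So the forward implication is not established by your argument.

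For what it is worth, the paper's own proof is no more detailed at this point: it exhibits the restriction $\cat{I_{\eps''}}\hookrightarrow\cat{I_{\eps'}}$ and then asserts that the colimit description yields the diagram, without saying how the interleavings at the various levels are to be chosen coherently. You have therefore isolated precisely the step that carries the content of the lemma, which is to your credit; but isolating it is not resolving it, and as written your proof of the forward direction is incomplete.
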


\begin{proof}
  ($\Leftarrow$) Let $\eps' > \eps$. Then $I_{\eps'}$ is a subcategory of $I_{\eps_+}$ and 
we have the following commutative diagram
\begin{equation*} 
	\begin{tikzcd}[row sep=tiny]
		& \cat{I_{\eps'}} \arrow[dd,hook] \\ \\	
		& \cat{I_{\eps^+}} \arrow[dd]	\\
	\cat{R} \arrow[ur,hook] \ar[uuur,hook] \arrow[dr,"M"'] & & \cat{R} \arrow[ul,hook'] \arrow[uuul,hook'] \arrow[dl,"N"] \\
		& \cat{Vect}
	\end{tikzcd}
\end{equation*}

  ($\Rightarrow$) Let $\eps'' > \eps' > \eps$. Then we have the following commutative diagram.
\begin{equation*} 
	\begin{tikzcd}[row sep=tiny]
		& \cat{I_{\eps''}} \arrow[dd,hook] \\ \\	
		& \cat{I_{\eps'}} \arrow[dd]	\\
	\cat{R} \arrow[ur,hook] \ar[uuur,hook] \arrow[dr,"M"'] & & \cat{R} \arrow[ul,hook'] \arrow[uuul,hook'] \arrow[dl,"N"] \\
		& \cat{Vect} 
	\end{tikzcd} 
\end{equation*}
Observing that $\cat{I_{\eps^+}} = \colim_{\eps'>\eps} \cat{I_{\eps'}}$ we obtain the desired commutative diagram~\eqref{cd:interleaving-dist}. 
\end{proof}

We remark that the Hausdorff distance between the two copies of $\cat{R}$ in $\cat{I_{\eps^+}}$ is $\eps$.

\subsubsection*{Interleaving and weak interleaving}

In~\cite{ccsggo:interleaving}, the authors define a discrete variant of interleaving they call weak $\eps$-interleaving. This notion can be nicely described using our categorical language, which furthermore clarifies its connection to the usual interleaving defined above.

For fixed $\alpha \in \R$, let $\cat{I_{\alpha,\eps}}$ be the category given by the poset $(\R \amalg \R, \leq_{\alpha,\eps})$ where if $a$ and $b$ are in the same copy of $\R$ then $a \leq_{\alpha,\eps} b$ if and only if $a \leq b$. The remaining inequalities are generated by the following: for all $n \in \mathbb{Z}$, $(\alpha+2n\eps,1) \leq_{\alpha,\eps} (\alpha + (2n+1)\eps,0) \leq_{\alpha,\eps} (\alpha+(2n+2)\eps,1)$.
Say that persistence modules $M$, $N$ are \emph{weak $\eps$-interleaved} if they are $\cat{I_{\alpha,\eps}}$-interleaved for some $\alpha$.

\begin{lem}
  If $M$ and $N$ are weak $\eps$-interleaved then they are $3\eps$-interleaved.
\end{lem}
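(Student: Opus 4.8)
The plan is to realize $\cat{I_{3\eps}}$ as a wide subcategory of $\cat{I_{\alpha,\eps}}$ (same objects, possibly fewer morphisms) that contains both boundary copies of $\cat{R}$, and then to restrict the given interleaving functor $H\colon \cat{I_{\alpha,\eps}} \to \cat{Vect}$ along this inclusion. The restriction is a functor $\cat{I_{3\eps}} \to \cat{Vect}$ that still restricts to $M$ and $N$ on the two copies of $\cat{R}$, hence is a $\cat{I_{3\eps}}$-interleaving, which as noted above is the same thing as a $3\eps$-interleaving. Since $\cat{I_{3\eps}}$ and $\cat{I_{\alpha,\eps}}$ are posets on the common object set $\R \amalg \R$, and closure under composition and identities is automatic for posets, the entire claim reduces to the single order inclusion $\leq_{3\eps} \,\subseteq\, \leq_{\alpha,\eps}$.

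To establish that inclusion, observe first that on each copy of $\R$ both orders coincide with the usual order, so only cross pairs require attention: given $(a,1) \leq_{3\eps} (b,0)$, i.e. $b - a \geq 3\eps$, one must produce $(a,1) \leq_{\alpha,\eps} (b,0)$, and symmetrically with the copies swapped. It is enough to show $(a,1) \leq_{\alpha,\eps} (a+3\eps,0)$ and $(a,0) \leq_{\alpha,\eps} (a+3\eps,1)$ for every $a \in \R$, since composing with the internal relation from $a+3\eps$ to $b$ then covers the general case. For the first of these, let $n$ be the least integer with $a \leq \alpha + 2n\eps$, so that also $\alpha + 2n\eps < a + 2\eps$; then $(a,1) \leq_{\alpha,\eps} (\alpha + 2n\eps,1) \leq_{\alpha,\eps} (\alpha + (2n+1)\eps,0) \leq_{\alpha,\eps} (a + 3\eps,0)$ is a chain of an internal step, a generating step, and an internal step (the last valid because $\alpha + (2n+1)\eps < a + 3\eps$), giving the claim by transitivity. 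The second relation is the mirror image, obtained from the least $n$ with $a \leq \alpha + (2n+1)\eps$ and the generating step $(\alpha + (2n+1)\eps,0) \leq_{\alpha,\eps} (\alpha + (2n+2)\eps,1)$ in the middle.

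The factor $3$ has a transparent source: a generating arrow crosses between the two copies with displacement exactly $\eps$, but from an arbitrary point of a copy one may have to travel up to $2\eps$ within that copy before reaching the source of such an arrow --- equivalently, every interval of length at least $2\eps$ meets the progression $\{\alpha + 2n\eps : n \in \mathbb{Z}\}$, and likewise its $\eps$-shift. This elementary estimate, together with keeping track of which copy of $\cat{R}$ carries $M$, which carries $N$, and how the offset $\alpha$ enters, is the only non-formal part of the argument, so I would regard it as the main obstacle only in that limited sense. Once the order inclusion is in hand, nothing further needs to be verified: the commuting triangles and the zigzag identities of the resulting $3\eps$-interleaving hold automatically, since $\cat{I_{3\eps}}$ is a thin category and the interleaving is nothing but the restriction of the functor $H$.
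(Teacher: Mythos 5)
Your proof is correct and follows essentially the same route as the paper, which simply observes that $\cat{I_{3\eps}}$ is a subcategory of $\cat{I_{\alpha,\eps}}$ and restricts the interleaving functor along the inclusion. The only difference is that you actually verify the order inclusion $\leq_{3\eps}\,\subseteq\,\leq_{\alpha,\eps}$ (correctly), whereas the paper leaves that check to the reader.
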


\begin{proof}
  Observe that for all $\alpha \in \R$, $\cat{I_{3\eps}}$ is a subcategory of $\cat{I_{\alpha,\eps}}$ and that we have the following commutative diagram.
\begin{equation*} 
	\begin{tikzcd}[row sep=tiny]
		& \cat{I_{3\eps}} \arrow[dd,hook] \\ \\	
		& \cat{I_{\alpha,\eps}} \arrow[dd]	\\
	\cat{R} \arrow[ur,hook] \ar[uuur,hook] \arrow[dr,"M"'] & & \cat{R} \arrow[ul,hook'] \arrow[uuul,hook'] \arrow[dl,"N"] \\
		& \cat{Vect}
	\end{tikzcd} \qedhere
\end{equation*}
\end{proof}

\subsubsection*{Old and new definitions of interleaving distance}

The standard definition of interleaving definition, as at the beginning of Section~\ref{sec:interleaving} and in Lemma~\ref{lemma:interleaving}, does not exactly match our more general definition outlined at the end of Section~\ref{sec:interleaving} and the end of Section~\ref{sec:cat-gh-intro}.

For example, consider the persistence modules $M_c$ over the field $K$ where $c \in \R$, that are given by $M(a) = 0$ if $a<c$ and $M(a) = K$ if $a \geq c$ and linear maps equal to the identity map whenever possible. 
Then $M_0$ and $M_1$ are $\cat{I_1}$-interleaved, but are not $\cat{I_{\eps}}$-interleaved for $\eps < 1$. So the (old) interleaving distance is between $M_0$ and $M_1$ is $1$.

However, using our more general notion, we obtain an interleaving distance of $0$. Indeed, we have the following commutative diagram.
\begin{equation*} 
	\begin{tikzcd}[row sep=tiny]
		& \cat{I_0} \arrow[dd,dashed]	\\
		\cat{R} \arrow[ur,hook,"j_1"] \arrow[dr,"M_0"'] & & \cat{R} \arrow[ul,hook',"j_2"'] \arrow[dl,"M_1"] \\
		& \cat{Vect}
	\end{tikzcd}
\end{equation*}
Here $j_0(a) = (a,0)$ as before, but $j_1(a) = (a-1,1)$.
Then the Hausdorff distance between the two copies of $\cat{R}$ in $\cat{I_0}$ is $0$ and the interleaving distance between $M_0$ and $M_1$ is $0$.

Notice that the new definition is translation invariant. This may be appropriate in applications where there is no canonical choice of $0$ for the measurements.

\subsubsection*{Observable morphisms}

In~\cite{ccbds}, the authors define observable morphisms of persistence modules. Here we observe that these may also be described using our framework. 

Let $\cat{O}$ denote the category given by the poset $(R \amalg R,\leq_o)$, where if $a$ and $b$ are in the same copy of $\R$ then $a \leq_o b$ if and only if $a \leq b$, $(a,0) \leq_o (b,1)$ if and only if $a < b$ and we never have that $(a,1) \leq_o (b,0)$.

Then an observable morphism from a persistence module $M$ to a persistence module $N$ is the following commutative diagram.
\begin{equation*} 
	\begin{tikzcd}[row sep=tiny]
		& \cat{O} \arrow[dd,dashed]	\\
		\cat{R} \arrow[ur,hook,"i_1"] \arrow[dr,"M"'] & & \cat{R} \arrow[ul,hook',"i_2"'] \arrow[dl,"N"] \\
		& \cat{Vect}
	\end{tikzcd}
\end{equation*}

\subsection{Applications}

As an application, we show that a notion of shift equivalences of discrete dynamical systems~\cite{Williams:1973,Wagoner:1999,bush:thesis} fits nicely in our theory (Section~\ref{sec:dynamical}).

For another potential application consider
sheaves, presheaves, cosheaves, or precosheaves, $F$ and $G$, on
different metric spaces, $X$ and $Y$, but with common codomain $\cat{C}$.
Then our theory provides an interleaving distance $d(F,G)$.

\subsubsection*{Future work}

One of the most important uses of Gromov-Hausdorff distance is to study the convergence of sequences of metric spaces. We would like to suggest that the theory presented here may allow one to study the convergence of weighted categories, and the convergence of functors on weighted categories with fixed codomain.
 
\subsubsection*{Outline of the paper}

In Section~\ref{sec:background}, we define terms and constructions that we will use throughout.
In Section~\ref{sec:hausdorff}, we give a progression of Hausdorff distances, on metric spaces, Lawvere metric spaces, and weighted categories.
In Section~\ref{sec:embedding-pair}, we consider pairwise embeddings of categories and interleavings of functors.
In Section~\ref{sec:gromov}, we study a categorical version of the space of metric spaces.
In Sections \ref{sec:fut}, \ref{sec:fut-to-emb} and \ref{sec:wfut} we study future equivalences, their connections to embeddings, and weighted future equivalences.
In Section~\ref{sec:applications} we consider some applications.
Finally in Appendix~\ref{sec:enriched-theory} we view some of our constructions from the vantage of enriched category theory, and in Appendix~\ref{sec:hausdorff-interleaving} we show that Hausdorff distance is an example of interleaving distance.

\section{Background}
\label{sec:background}

In this section we discuss 
\emph{Lawvere metric spaces}, a categorical version of metric spaces, 
\emph{weighted categories}, a categorical version of length spaces, and how to obtain the former from the latter.
We also define embeddings of categories, and recall the notion
\emph{bicategory}, a second-order version of category, which turns out to be the proper context for the categorical version of the space of metric spaces.
Finally, we define cospans and pairwise embeddings of categories and of weighted categories.

\subsection{Lawvere metric spaces}
\label{sec:lawvere}

Lawvere~\cite{lawvere:1973} defined the following generalization of metric spaces.

\begin{defn}\label{defn:lawvere}
  A \emph{Lawvere metric space} consists of a class of objects $X$ together with a function $d:X\times X \to [0,\infty]$ such that
  \begin{enumerate}
  \item for all $x \in X$, $d(x,x)=0$, and
  \item for all $x,y,z \in X$, $d(x,z) \leq d(x,y)+d(y,z)$.
  \end{enumerate}
\end{defn}
Note that we have relaxed the usual definition of a metric in four ways: $d$ is not required to be finite, symmetric or to separate points, and $X$ is not required to be a set. 
A Lawvere metric space $(X,d)$ is said to be \emph{symmetric} if for all $x,y \in X$, $d(x,y) = d(y,x)$.
We say that a Lawvere metric space $(X,d)$ is \emph{small} if $X$ is a set.
The last two relaxations are less essential: we can restrict to all the points within a finite distance of some fixed point to obtain the finiteness condition, and we can identify points that are distance 0 apart to obtain the separation condition.

A \emph{morphism of Lawvere metric spaces} $F:(X,d) \to (Y,d')$ is a map $F:X \to Y$ satisfying $d'(f(x),f(y)) \leq d(x,y)$ for all $x,y \in X$.
That is, it is a non-expansive, or 1-Lipschitz map.
We denote by $\cat{Lawv}$ the category of (small) Lawvere spaces.

\begin{rmk}
  Lawvere metric spaces may equivalently be defined to be categories enriched in the strict monoidal poset $(([0,\infty],\geq),+,0)$. Details are given in the appendix.
\end{rmk}

\subsection{Weighted categories}
\label{sec:wcat}

Originally called \emph{normed categories} by Lawvere~\cite{lawvere:1973}, our weighted categories are what Grandis calls \emph{additive} in ~\cite[Section 3]{grandis:2007}.

\begin{defn}\label{defn:weighted-category}
A \emph{weighted category} is a category $\cat{C}$ 
in which every morphism $f$ has a \emph{weight} $w(f) \in [0,\infty]$,
such that
\begin{enumerate}
	\item $w(1_{a})=0$ for any identity morphism $1_{a}$
	\item $w(fg) \leq w(f) + w(g)$.
\end{enumerate} 
\end{defn}

A morphism of weighted categories is a functor $F: \cat{C} \rightarrow \cat{D}$ with 
\[
	w_{\cat{D}}(F(c\rightarrow c')) \leq w_{\cat{C}}(c \rightarrow c').
\]
We say that such a functor $F$ is \emph{nonexpansive}.
We denote the category of weighted small categories and nonexpansive morphisms by $\cat{wCat}$.

\begin{ex}
  Let $(X,d)$ be a metric space. Let $\cat{C}$ be the category whose objects are the points in $X$ and such that each morphism set $\cat{C}(x,y)$ contains a single element whose weight is given by $d(x,y)$. Then $\cat{C}$ is a weighted category. This construction generalizes to Lawvere metric spaces (Definition~\ref{defn:lawvere}).
\end{ex}

\begin{ex}
  Let $X$ be a topological space with a class of admissible paths $A$ and a length structure $L:A \to [0,\infty]$~\cite{bbi:book}. Let $\cat{C}$ be the category whose objects are points in $X$ and whose morphisms are given by paths in $A$ with weight function given by the length structure. Then $\cat{C}$ is a weighted category.
\end{ex}

\begin{rmk}\label{rmk:wset}
A weighted category can be considered to be a category enriched in ``weighted sets''. Details are given in the appendix.  
\end{rmk}

\subsection{From weighted categories to Lawvere metric spaces}
\label{sec:wcat-to-lawvere}

Let $(\cat{C},w)$ be a weighted category. Let $C$ denote the class of objects in $\cat{C}$. The \emph{induced metric} given by
\begin{equation*}
  d(x,y) = \inf_{f:x\to y} w(f).
\end{equation*}
makes $(C,d)$ into a Lawvere metric space.

Let $F:(\cat{P},w) \to (\cat{Q},w')$ be a morphism of weighted categories, and let $(P,d)$ and $(Q,d')$ be the corresponding Lawvere metric spaces. 
Then for all $a,b \in P$,
\begin{equation*}
  d'(F(a),F(b)) = \inf_{g:F(a)\to F(b)}w'(g) \leq \inf_{f:a\to b}w'(F(f)) \leq \inf_{f:a \to b}w(f) = d(a,b).
\end{equation*}
Thus $F$ induces a morphism of Lawvere metric spaces $F:(P,d) \to (Q,d')$.

\begin{rmk}
The above construction, $(\cat{C}, w) \rightarrow (C,d)$, defines a functor, $\cat{wCat} \rightarrow \cat{Lawv}$.
The verifications are straightforward.
For an enriched-category theoretic interpretation of this functor, see  Appendix~\ref{sec:comparison}.
\end{rmk}

\subsection{Embeddings}
\label{sec:embedding}

Let $F : \cat{A} \rightarrow \cat{C}$ be a functor.  
We will say that $F$ is an \emph{embedding} if it is full, faithful, and injective on objects.
We denote embeddings by hooked arrows,  $F : \cat{A} \hookrightarrow \cat{C}$.
An \emph{embedding of weighted categories} is a weight-preserving functor that is an embedding of the underlying categories.
An \emph{embedding of Lawvere metric spaces} is an isometric injective map.

\begin{lem} \label{lem:embedding}
  Let $F: (\cat{P},w) \hookrightarrow (\cat{Q},w')$ be an embedding of weighted
  categories.  Then the induced map of Lawvere metric spaces
  $F: (P,d) \to (Q,d')$ is also an embedding.
\end{lem}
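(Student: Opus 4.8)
The plan is to unwind the definition of the induced metric on both sides and use fullness and faithfulness of $F$ to show that the infimum defining $d'(F(a),F(b))$ is attained over exactly the same set of weights as the infimum defining $d(a,b)$. Recall that $d(a,b) = \inf_{f:a\to b} w(f)$ and $d'(F(a),F(b)) = \inf_{g:F(a)\to F(b)} w'(g)$. Since $F$ is weight-preserving, for every $f:a\to b$ we have $w'(F(f)) = w(f)$, so the image of the map $f \mapsto F(f)$ contributes the same multiset of weights to the second infimum as the first infimum collects. Fullness guarantees that every $g:F(a)\to F(b)$ is of the form $F(f)$ for some $f:a\to b$, so there are no ``extra'' morphisms in $\cat{Q}$ lowering the infimum; faithfulness is not strictly needed for the equality of infima but is part of the package ensuring $F$ is an embedding. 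Hence $d'(F(a),F(b)) = d(a,b)$ for all $a,b\in P$, i.e. $F$ is isometric.

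The remaining content is to check injectivity of the underlying map on points. This is immediate: $F$ is injective on objects by hypothesis (it is an embedding of categories), and the object class of the Lawvere metric space $(P,d)$ is precisely the object class $P$ of $\cat{P}$, with $F$ acting on points the same way it acts on objects. So the induced map $P\to Q$ is injective. Combined with the isometry property established above, this shows the induced map is an embedding of Lawvere metric spaces in the sense of Section~\ref{sec:embedding}, namely an isometric injective map.

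I would organize the write-up in two short steps: first the isometry computation (a one-line chain of equalities of infima, citing fullness to identify the indexing sets and weight-preservation to match the weights), then the one-sentence observation about injectivity on objects. There is essentially no obstacle here; the only thing to be slightly careful about is that fullness is what makes the inequality ``$d'(F(a),F(b)) \le d(a,b)$'' (already noted in Section~\ref{sec:wcat-to-lawvere} for general morphisms) into an equality, and that weight-preservation (not merely nonexpansiveness) is what gives the reverse inequality $d'(F(a),F(b)) \ge d(a,b)$ term by term. It is worth stating this explicitly so the reader sees where each of the three embedding conditions is (or is not) used.
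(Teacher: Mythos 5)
Your proposal is correct and follows the same route as the paper's proof: the chain of equalities $d'(F(a),F(b)) = \inf_{g:F(a)\to F(b)}w'(g) = \inf_{f:a\to b}w'(F(f)) = \inf_{f:a\to b}w(f) = d(a,b)$, with fullness identifying the indexing sets and weight-preservation matching the weights, plus injectivity on objects for injectivity of the induced map. Your added remark that faithfulness is not actually used is accurate and a nice observation, though the paper does not make it explicit.
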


\begin{proof}
  Since the functor $F:\cat{P} \to \cat{Q}$ is injective on objects, $F:(P,d) \to (Q,d')$ is injective.
  Since $F$ is full and weight-preserving, we have for all $a,b \in P$,
\begin{equation*}
  d'(F(a),F(b)) = \inf_{g:F(a)\to F(b)}w'(g) = \inf_{f:a\to b}w'(F(f)) = \inf_{f:a \to b}w(f) = d(a,b).
\end{equation*}
So the induced map is an isometry.
\end{proof}

\subsection{Bicategories}
\label{sec:bicat}

A \emph{strict 2-category} is a category in which every hom-set is a small category. More precisely, it is a category enriched in $\cat{Cat}$, the category of small categories. 
A \emph{bicategory} is also a category in which every hom-set is a small category, but in which the associativity and unity laws of enriched categories only hold up to coherent isomorphism. 
For details, see~\cite{benabou:1967}.

\subsection{Cospans}
\label{sec:cospan}

Let $P$ and $Q$ be objects in the category $\cat{C}$.
The category $\cat{Cospan}(P,Q)$ consists of all diagrams $P \rightarrow X \leftarrow Q$, with commuting diagrams
\[
	\begin{tikzcd}[row sep=tiny]
		& X \arrow[dd]	\\
		P \arrow[ur] \arrow[dr] & & Q \arrow[ul] \arrow[dl] \\
		& Y
	\end{tikzcd}
\]
as morphisms.
If $\cat{C}$ has push-outs, we may form the bicategory $\cat{Cospan}$, with horizontal composition defined by the following diagram.
\[
	\begin{tikzcd}[row sep=tiny]
		& & X \amalg_{Q} Y \\
		& X \arrow[ur] & & Y \arrow[ul] \\
		P \arrow[ur] & & Q \arrow[ur] \arrow[ul] & & R \arrow[ul]
	\end{tikzcd}
\]

\subsection{Pairwise embeddings of categories and weighted categories}
\label{sec:Emb-wEmb}

For small categories $P$ and $Q$, define
the \emph{category of pairwise embeddings of $P$ and $Q$}, 
$\cat{Emb}(P,Q)$, to be
the full subcategory of $\cat{Cospan}(P,Q)$ (Section~\ref{sec:cospan}) consisting of diagrams $P \xrightarrow{F} I \xleftarrow{G} Q$ in which $F$ and $G$ are embeddings (Section~\ref{sec:embedding}). 


If $P$ and $Q$ are weighted categories (Section~\ref{sec:wcat}), let $\cat{wEmb}(P,Q)$ be the category of all \emph{weighted pairwise embeddings of $P$ and $Q$}, in which the objects are diagrams $P \hookrightarrow I \hookleftarrow Q$ of weighted embeddings (Section~\ref{sec:embedding}).
The morphisms are as in $\cat{Emb}(P,Q)$, only weighted.

\section{Categorification of Hausdorff distance}
\label{sec:hausdorff}

In this section we recall the definition of the \emph{Hausdorff distance} and discuss its generalization to Lawvere metric spaces.  
We show that the collection of subsets of a Lawvere metric space, equipped with the Hausdorff distance, is itself a Lawvere metric space.
Finally, we define Hausdorff distance for weighted categories by reducing to Lawvere metric spaces.

\subsection{Hausdorff distance in metric spaces}
\label{sec:hausdorff-metric}

In this section we give two equivalent definition of Hausdorff distance for subsets of a metric space. In Appendix~\ref{sec:hausdorff-interleaving} we give a third equivalent definition using the notion of interleaving distance from~\cite{bdss:1}.

Let $(M,d)$ be a metric space. Let $A$ and $B$ be subsets of $M$. 
\begin{defn} \label{def:hausdorff-metric}
  The \emph{Hausdorff distance} between $A$ and $B$ is given by 
  \begin{equation*}
    d_H(A,B) = \max \left( \sup_{a\in A} \inf_{b\in B} d(a,b), \sup_{b\in B} \inf_{a\in A} d(a,b) \right).
  \end{equation*}
\end{defn}

\begin{defn}
  For a subset $A \subset M$ and $r \geq 0$, let the \emph{$r$-offset} of $A$ be given by
  \begin{equation*}
    A^r = \{m \in M \ | \ \inf_{a\in A} d(a,m) \leq r\}.
  \end{equation*}
\end{defn}
We have the following equivalent definition of Hausdorff distance.

\begin{lem} \label{lem:hausdorff-metric} 
  Let $A,B \subset M$. Then
  \begin{equation*}
    d_H(A,B) = \inf \{ r \geq 0 \ | \ A \subset B^r \text{ and } B \subset A^r \}.
  \end{equation*}
\end{lem}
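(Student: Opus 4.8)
The plan is to prove the two sets $d_H(A,B)$ and $r^* := \inf\{r \geq 0 \mid A \subset B^r \text{ and } B \subset A^r\}$ are equal by showing each is $\leq$ the other; in fact I will show directly that the set $\{r \geq 0 \mid A \subset B^r \text{ and } B \subset A^r\}$ is precisely $[d_H(A,B), \infty)$, possibly without its left endpoint, so that its infimum is $d_H(A,B)$. First I would unwind definitions: $A \subset B^r$ means that for every $a \in A$, $\inf_{b \in B} d(a,b) \leq r$, which is equivalent to $\sup_{a \in A} \inf_{b \in B} d(a,b) \leq r$. Symmetrically, $B \subset A^r$ is equivalent to $\sup_{b \in B} \inf_{a \in A} d(a,b) \leq r$. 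Hence the conjunction ``$A \subset B^r$ and $B \subset A^r$'' holds if and only if $\max\left(\sup_{a\in A}\inf_{b\in B} d(a,b),\ \sup_{b\in B}\inf_{a\in A} d(a,b)\right) \leq r$, i.e. if and only if $d_H(A,B) \leq r$.

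Once that equivalence is established, the conclusion is immediate: the set over which we take the infimum is $\{r \geq 0 \mid d_H(A,B) \leq r\} = [d_H(A,B), \infty) \cap [0,\infty)$, whose infimum is $\max(d_H(A,B), 0) = d_H(A,B)$, since the Hausdorff distance is nonnegative. (If $d_H(A,B) = \infty$ the set is empty and both sides equal $\infty$ under the usual convention $\inf \emptyset = \infty$; if $A$ or $B$ is empty one either excludes that case or notes both sides are $\infty$ or $0$ consistently — I would add a parenthetical remark rather than belabor it.)

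The one point requiring a little care — and the step I expect to be the main (minor) obstacle — is the passage from the pointwise statement ``for all $a \in A$, $\inf_{b\in B} d(a,b) \leq r$'' to the supremum statement ``$\sup_{a \in A} \inf_{b\in B} d(a,b) \leq r$''. This is the standard fact that $\sup_i x_i \leq r \iff x_i \leq r$ for all $i$, valid in $[0,\infty]$; the subtlety is only that the individual infima $\inf_{b\in B} d(a,b)$ need not be attained, so one must argue at the level of inequalities of extended reals rather than by choosing witnesses. I would state this as a one-line observation. No triangle inequality is needed here — this lemma is purely formal bookkeeping about the two definitions of $d_H$ — so the proof should be short.

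\begin{proof}
Fix $r \geq 0$. By definition of the $r$-offset, $A \subset B^r$ holds if and only if $\inf_{b \in B} d(a,b) \leq r$ for every $a \in A$, which in turn holds if and only if $\sup_{a\in A} \inf_{b\in B} d(a,b) \leq r$. Symmetrically, $B \subset A^r$ holds if and only if $\sup_{b\in B} \inf_{a\in A} d(a,b) \leq r$. Therefore both inclusions hold simultaneously if and only if
\begin{equation*}
  \max\left( \sup_{a\in A}\inf_{b\in B} d(a,b),\ \sup_{b\in B}\inf_{a\in A} d(a,b) \right) \leq r,
\end{equation*}
that is, if and only if $d_H(A,B) \leq r$. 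Consequently
\begin{equation*}
  \{ r \geq 0 \mid A \subset B^r \text{ and } B \subset A^r \} = \{ r \geq 0 \mid d_H(A,B) \leq r \},
\end{equation*}
and since $d_H(A,B) \geq 0$, the infimum of this set is $d_H(A,B)$.
\end{proof}
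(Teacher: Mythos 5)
Your proof is correct and follows essentially the same route as the paper's: both reduce the statement to the observation that $A \subset B^r$ and $B \subset A^r$ hold if and only if $d_H(A,B) \leq r$. Your presentation as a single set equality is arguably cleaner than the paper's two-directional $\eps$-argument, but the mathematical content is identical.
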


\begin{proof}
  ($\leq$) 
Assume $A \subset B^r$ and $B \subset A^r$. Let $a \in A$. For $\eps > 0$ there exists $b \in B$ such that $d(a,b) \leq r + \eps$. 
So $\inf_{b\in B} d(a,b) \leq r$.
Similarly, for all $b \in B$, $\inf_{a\in A} d(a,b) \leq r$.
Thus $d_H(A,B) \leq r$.

  ($\geq$) 
Assume $A \not\subset B^r$. Then there exists $a \in A$ and $\eps > 0$ such that $\inf_{b\in B} d(a,b) \geq r + \eps$. Thus $d_H(A,B) \geq r+\eps$.
Similarly, this is implied by $B \not\subset A^r$.
\end{proof}

\subsection{Hausdorff distance in Lawvere metric spaces}
\label{sec:hausdorff-lawvere}

In this section, we generalize Hausdorff distance from metric spaces to Lawvere metric spaces;  the resulting distance is not necessarily symmetric.

Let $(M,d)$ be a Lawvere metric space. Let $A$ and $B$ be subsets of $M$. 
\begin{defn} \label{def:hausdorff-lawvere}
  The \emph{Hausdorff distance} from $A$ to $B$ is given by 
  \begin{equation*}
    d_H(A,B) = \max \left( \sup_{a\in A} \inf_{b\in B} d(a,b), \sup_{b\in B} \inf_{a\in A} d(a,b) \right).
  \end{equation*}
\end{defn}

Note that this is exactly the same as the definition for metric spaces. To state the second version of this definition requires more care since there are non-symmetrical offsets.

\begin{defn}
  For a subset $A \subset M$ and $r \geq 0$, let the \emph{future $r$-offset} of $A$ be given by
  \begin{equation*}
    A^r = \{m \in M \ | \ \inf_{a\in A} d(a,m) \leq r\}.
  \end{equation*}
The \emph{past $r$-offset} of $A$ is given by
  \begin{equation*}
    {}^r\!A = \{m \in M \ | \ \inf_{a\in A} d(m,a) \leq r\}.
  \end{equation*}
\end{defn}

We will need the following observations.

\begin{lem} \label{lem:offset}
  \begin{enumerate}
  \item $(A^r)^s \subset A^{r+s}$
    and ${}^s(^r\!A) \subset {}^{r+s}\!A$
  \item If $A \subset B$ then $A^r \subset B^r$ and ${}^r\!A \subset {}^r\!B$.
  \end{enumerate}
\end{lem}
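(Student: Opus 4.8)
The plan is to prove the two parts of Lemma~\ref{lem:offset} directly from the definitions of the future and past offsets, using only the triangle inequality in the Lawvere metric space $(M,d)$. Both statements are elementary, so the main task is just to be careful about which variable the infimum is taken over and in which argument of $d$ it sits.

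For part (1), I would argue as follows. Suppose $m \in (A^r)^s$. By definition there is a point of $A^r$ close to $m$; more precisely, for every $\eps > 0$ there exists $p \in A^r$ with $d(p,m) \leq s + \eps$, and then since $p \in A^r$ there exists $a \in A$ with $d(a,p) \leq r + \eps$. The triangle inequality gives $d(a,m) \leq d(a,p) + d(p,m) \leq r + s + 2\eps$, so $\inf_{a \in A} d(a,m) \leq r + s + 2\eps$ for all $\eps > 0$, hence $\inf_{a \in A} d(a,m) \leq r + s$, i.e. $m \in A^{r+s}$. (Alternatively one can phrase this without $\eps$'s by noting $\inf_{a \in A} d(a,m) \leq \inf_{p \in A^r}\left(\inf_{a \in A} d(a,p) + d(p,m)\right) \leq \inf_{p \in A^r}(r + d(p,m)) = r + \inf_{p\in A^r} d(p,m) \leq r + s$.) The statement ${}^s({}^r\!A) \subset {}^{r+s}\!A$ is the mirror image: if $m \in {}^s({}^r\!A)$ then for each $\eps>0$ there is $p \in {}^r\!A$ with $d(m,p)\leq s+\eps$ and $a \in A$ with $d(p,a)\leq r+\eps$, so $d(m,a)\leq d(m,p)+d(p,a)\leq r+s+2\eps$, giving $m \in {}^{r+s}\!A$.

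For part (2), suppose $A \subset B$. If $m \in A^r$ then $\inf_{a \in A} d(a,m) \leq r$; since $A \subset B$, the infimum over the larger set $B$ can only be smaller or equal, so $\inf_{b \in B} d(b,m) \leq \inf_{a \in A} d(a,m) \leq r$, hence $m \in B^r$. The argument for the past offsets is identical, using $\inf_{b\in B} d(m,b) \leq \inf_{a \in A} d(m,a)$.

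There is no real obstacle here — everything follows from the triangle inequality and monotonicity of infima over nested sets. The only point requiring a little attention is handling the infima cleanly: since an infimum need not be attained, one either works with an arbitrary $\eps > 0$ as above, or chains the inequalities through the infima as in the parenthetical remark; I would use the latter, more compact, formulation in the final write-up. One should also note that the argument never uses symmetry of $d$, so it applies verbatim in the Lawvere setting, and the asymmetry is exactly what forces us to state separate conclusions for $A^r$ and ${}^r\!A$.
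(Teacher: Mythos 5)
Your proof is correct and follows essentially the same route as the paper's: for part (1) pick an approximate witness in $A^r$ and then in $A$ and apply the triangle inequality (the paper uses $\eps/2$ twice where you use $\eps$ twice, which is immaterial), and for part (2) use that the infimum over the larger set $B$ is no bigger than that over $A$. No gaps.
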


\begin{proof}
  \begin{enumerate}
  \item Let $x \in (A^r)^s$. For $\eps > 0$, there exists $y \in A^r$ such that $d(y,x) \leq s+\eps/2$ and there exists $a \in A$ such that $d(a,y) \leq r+\eps/2$. By the triangle inequality, $d(a,x) \leq r+s+\eps$. Thus $x \in A^{r+s}$. The second containment follows similarly.
  \item Assume $A \subset B$. Let $x \in A^r$. For $\eps>0$, there exists $a \in A \subset B$ such that $d(a,x) \leq r+\eps$. Thus $x\in B^r$. The second containment follows similarly. \qedhere
  \end{enumerate}
\end{proof}

We now have the following equivalent definition of Hausdorff distance.

\begin{lem} \label{lem:hausdorff-lawvere}
  Let $A,B \subset M$. Then
  \begin{equation*}
    d_H(A,B) = \inf \{ r \geq 0 \ | \ A \subset {}^r\!B \text{ and } B \subset A^r \}.
  \end{equation*}
\end{lem}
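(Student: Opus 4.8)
The plan is to mirror the proof of Lemma~\ref{lem:hausdorff-metric}, being careful about which offset (future or past) appears on which side. Recall that $d_H(A,B)$ is defined as the max of $\sup_{a\in A}\inf_{b\in B} d(a,b)$ and $\sup_{b\in B}\inf_{a\in A} d(a,b)$. The membership condition $B \subset A^r$ says that every $b \in B$ has $\inf_{a\in A} d(a,b) \leq r$, which controls the first term $\sup_{b\in B}\inf_{a\in A}d(a,b)$. Meanwhile $A \subset {}^r\!B$ says that every $a \in A$ has $\inf_{b\in B} d(a,b) \leq r$, which controls the second term $\sup_{a\in A}\inf_{b\in B}d(a,b)$. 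So the correspondence between the two terms in the max and the two containments is the thing to keep straight; the asymmetry of $d$ is exactly what forces the appearance of ${}^r\!B$ rather than $B^r$ in one of the two conditions.

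For the $(\leq)$ direction, I would assume $A \subset {}^r\!B$ and $B \subset A^r$. Fix $a \in A$ and $\eps > 0$; by $A \subset {}^r\!B$ there is $b \in B$ with $d(a,b) \leq r + \eps$ (using that membership in ${}^r\!B$ only gives the infimum is $\leq r$, so we can find a witness within $\eps$). Hence $\inf_{b \in B} d(a,b) \leq r + \eps$ for all $\eps$, so $\inf_{b\in B} d(a,b) \leq r$, and taking the sup over $a \in A$ bounds the second term by $r$. Symmetrically, from $B \subset A^r$ we get $\sup_{b\in B}\inf_{a\in A} d(a,b) \leq r$. Therefore $d_H(A,B) \leq r$, and since this holds for every admissible $r$, it holds for the infimum.

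For the $(\geq)$ direction I would argue contrapositively, as in the model proof. Suppose $r$ is such that the right-hand set condition fails, i.e. $A \not\subset {}^r\!B$ or $B \not\subset A^r$. If $A \not\subset {}^r\!B$, there is $a \in A$ with $\inf_{b\in B} d(a,b) > r$; since the set of admissible radii is $\{r \geq 0 : A \subset {}^r\!B, B \subset A^r\}$ and (by Lemma~\ref{lem:offset}(2) plus monotonicity of offsets in $r$) this set is upward-closed, showing $d_H(A,B) \geq r'$ for all $r'$ below its infimum suffices; concretely, pick $\eps>0$ with $\inf_{b\in B} d(a,b) \geq r + \eps$, so $d_H(A,B) \geq r + \eps > r$. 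The case $B \not\subset A^r$ is symmetric. Combining both directions gives equality.

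**The only genuine subtlety** is the direction of the offsets and the handling of $[0,\infty]$-valued, non-symmetric $d$: one must verify that ``$a \in {}^r\!B$'' unwinds to ``$\inf_{b \in B} d(a,b) \leq r$'' and not the reversed inequality, and that infima can be $+\infty$ (in which case both sides of the asserted identity are $+\infty$, so the statement is vacuously fine). I do not anticipate needing Lemma~\ref{lem:offset}(1) here — that lemma is for composing offsets and will be used later — though Lemma~\ref{lem:offset}(2) is a convenient way to see the admissible-radius set is an up-set, which makes the ``infimum'' formulation clean. No smallness or symmetry hypotheses on $M$ are needed.
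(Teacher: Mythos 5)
Your proof is correct and is essentially the paper's own argument: the paper's proof of Lemma~\ref{lem:hausdorff-lawvere} consists precisely of the remark that one repeats the proof of Lemma~\ref{lem:hausdorff-metric} with $B^r$ replaced by ${}^r\!B$, which is what you carry out, with the right matching of the two containments to the two terms of the $\max$. (The appeal to Lemma~\ref{lem:offset}(2) for upward-closedness is unnecessary --- monotonicity of the offsets in $r$ is immediate from the definition --- but this does not affect the argument.)
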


\begin{proof}
  The proof is the exactly the same as the proof of Lemma~\ref{lem:hausdorff-metric} after replacing $B^r$ with ${}^r\!B$.
\end{proof}

\begin{thm} \label{thm:hausdorff}
  The class of subspaces of a Lawvere metric space together with the above Hausdorff distance is a Lawvere metric space.
\end{thm}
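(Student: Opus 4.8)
The plan is to verify the two axioms of a Lawvere metric space (Definition~\ref{defn:lawvere}) for the pair consisting of the class of subspaces of a given Lawvere metric space $(M,d)$, equipped with $d_H$. The first axiom, that $d_H(A,A) = 0$ for every subset $A$, is immediate: for each $a \in A$ we may take $b = a$ in the infimum, so $\inf_{b \in A} d(a,b) = 0$, and both suprema in Definition~\ref{def:hausdorff-lawvere} vanish. The substance of the theorem is the triangle inequality $d_H(A,C) \leq d_H(A,B) + d_H(B,C)$ for subsets $A,B,C \subseteq M$, and I would prove this using the offset characterization of Lemma~\ref{lem:hausdorff-lawvere} together with the offset manipulations of Lemma~\ref{lem:offset}.

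Here is the key step in detail. Fix $r > d_H(A,B)$ and $s > d_H(B,C)$; it suffices to show $d_H(A,C) \leq r+s$, and then let $r \downarrow d_H(A,B)$ and $s \downarrow d_H(B,C)$. By Lemma~\ref{lem:hausdorff-lawvere} we have $A \subseteq {}^r\!B$, $B \subseteq A^r$, $B \subseteq {}^s\!C$, and $C \subseteq B^s$. From $A \subseteq {}^r\!B$ and $B \subseteq {}^s\!C$, monotonicity of past offsets (Lemma~\ref{lem:offset}(2)) gives ${}^r\!B \subseteq {}^r({}^s\!C)$, and then the subadditivity of past offsets (Lemma~\ref{lem:offset}(1)) gives ${}^r({}^s\!C) \subseteq {}^{r+s}\!C$; chaining these, $A \subseteq {}^{r+s}\!C$. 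Symmetrically, from $C \subseteq B^s$ and $B \subseteq A^r$ we get $C \subseteq (A^r)^s \subseteq A^{r+s}$ using parts (2) and (1) of Lemma~\ref{lem:offset} for future offsets. Applying Lemma~\ref{lem:hausdorff-lawvere} in the other direction, these two containments yield $d_H(A,C) \leq r+s$, which is what we wanted.

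One should be slightly careful about the inequalities being strict versus non-strict when invoking Lemma~\ref{lem:hausdorff-lawvere}: that lemma is stated as an infimum, so from $d_H(A,B) < r$ one concludes the containments $A \subseteq {}^r\!B$ and $B \subseteq A^r$ (the infimum of the set of valid radii is strictly less than $r$, hence $r$ itself is a valid radius, or can be reached by a radius below it together with Lemma~\ref{lem:offset}(2)). This is routine but worth a sentence. The limiting argument at the end is also standard: since the inequality $d_H(A,C) \leq r+s$ holds for all $r > d_H(A,B)$ and all $s > d_H(B,C)$, taking infima over such $r$ and $s$ gives $d_H(A,C) \leq d_H(A,B) + d_H(B,C)$, with the convention that $\infty$ is allowed as a value throughout, so no finiteness hypothesis is needed.

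The main obstacle is essentially bookkeeping rather than mathematical depth: one must keep straight which offsets are "future" ($A^r$) and which are "past" (${}^r\!A$) at each stage, since Lemma~\ref{lem:hausdorff-lawvere} pairs $A \subseteq {}^r\!B$ with $B \subseteq A^r$ asymmetrically, and the asymmetry propagates differently through the two halves of the triangle-inequality argument. Because Lemma~\ref{lem:offset} has already packaged the two non-trivial facts (subadditivity and monotonicity of offsets) in both the future and past versions, the proof is short once the containments are threaded correctly; I expect the whole argument to occupy only a few lines.
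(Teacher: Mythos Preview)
Your proposal is correct and follows essentially the same approach as the paper: verify $d_H(A,A)=0$ trivially, then for the triangle inequality pick $r>d_H(A,B)$ and $s>d_H(B,C)$, use Lemma~\ref{lem:hausdorff-lawvere} to obtain the four offset containments, and chain them via Lemma~\ref{lem:offset} to get $A\subseteq {}^{r+s}C$ and $C\subseteq A^{r+s}$. The paper's proof is just the terse two-line version of exactly this argument, without your added commentary on strict inequalities and the limiting step.
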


\begin{proof}
  Let $A \subset M$. Then $d_H(A,A)=0$. 

  Let $A,B,C \subset M$. Let $r>d_H(A,B)$ and $s>d_H(B,C)$. Then $A \subset {}^r\!B$, $B\subset A^r$ and $C \subset {}^sC$, $C \subset B^s$. Thus, by Lemma~\ref{lem:offset}, $A \subset {}^r\!B \subset {}^r(^sC) \subset {}^{r+s}C$ and $C\subset B^s \subset (A^r)^s \subset A^{r+s}$. Therefore $d_H(A,C) \leq r+s$. 
\end{proof}

Symmetrizing this distance we obtain the following.

\begin{defn}
  The \emph{symmetric Hausdorff distance} is given by
  \begin{equation*}
    \tilde{d}_H(A,B) = \max(d_H(A,B),d_H(B,A)).
  \end{equation*}
\end{defn}

\begin{thm}
  The class of subspaces of a Lawvere metric space together with the symmetric Hausdorff distance is a symmetric Lawvere metric space.
\end{thm}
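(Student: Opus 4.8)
The plan is to deduce this from the already-established non-symmetric result (Theorem~\ref{thm:hausdorff}) together with the elementary observation that taking a symmetric maximum of a Lawvere metric with its ``opposite'' produces a symmetric Lawvere metric. Concretely, write $\mathcal{S}(M)$ for the class of subsets of the Lawvere metric space $(M,d)$, so that by Theorem~\ref{thm:hausdorff} the pair $(\mathcal{S}(M), d_H)$ is a Lawvere metric space, and note that $\tilde{d}_H(A,B) = \max(d_H(A,B), d_H(B,A))$.

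First I would verify the two axioms of Definition~\ref{defn:lawvere}. Reflexivity is immediate: $\tilde{d}_H(A,A) = \max(d_H(A,A), d_H(A,A)) = 0$ since $d_H(A,A)=0$ by Theorem~\ref{thm:hausdorff}. For the triangle inequality, let $A,B,C \subset M$. Applying the triangle inequality for $d_H$ (valid by Theorem~\ref{thm:hausdorff}) in both directions gives
\begin{align*}
  d_H(A,C) &\leq d_H(A,B) + d_H(B,C) \leq \tilde{d}_H(A,B) + \tilde{d}_H(B,C), \\
  d_H(C,A) &\leq d_H(C,B) + d_H(B,A) \leq \tilde{d}_H(B,C) + \tilde{d}_H(A,B).
\end{align*}
Taking the maximum of the two left-hand sides yields $\tilde{d}_H(A,C) \leq \tilde{d}_H(A,B) + \tilde{d}_H(B,C)$, which is the desired inequality. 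Symmetry is built in by construction: $\tilde{d}_H(A,B) = \max(d_H(A,B),d_H(B,A)) = \tilde{d}_H(B,A)$, so the resulting Lawvere metric space is symmetric in the sense defined after Definition~\ref{defn:lawvere}.

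There is essentially no obstacle here; the work has already been done in Theorem~\ref{thm:hausdorff}, and this statement is the routine ``symmetrization'' corollary. The only thing worth being slightly careful about is that the argument uses $d_H$ in \emph{both} orders simultaneously, which is exactly why the symmetrized version is still a Lawvere metric space — the non-symmetric triangle inequality of Theorem~\ref{thm:hausdorff} holds for every ordered triple, so both of the displayed inequalities above are available. One could alternatively phrase this as an instance of a general lemma (if $(X,d)$ is a Lawvere metric space then so is $(X, \max(d(x,y),d(y,x)))$, and the latter is symmetric), but inlining the two-line argument as above is cleaner given that Theorem~\ref{thm:hausdorff} is stated immediately beforehand.
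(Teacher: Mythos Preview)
Your proof is correct and is essentially identical to the paper's: both deduce reflexivity and the triangle inequality from Theorem~\ref{thm:hausdorff} applied in each order, bound by the symmetrized maxima, and note symmetry is immediate from the definition. The only cosmetic difference is that the paper writes the triangle-inequality step as a single chain using $\max(a+b,\,c+d)\leq \max(a,c)+\max(b,d)$, whereas you display the two directions separately before taking the maximum.
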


\begin{proof}
  For $A \subset M$, $\tilde{d}_H(A,A) = 0$. For $A,B,C \subset M$ we have $\tilde{d}_H(A,C) \leq \max( d_H(A,B) + d_H(B,C), d_H(C,B) + d_H(B,A)) \leq \max(d_H(A,B),d_H(B,A)) + \max(d_H(B,C),d_H(C,B)) = \tilde{d}_H(A,B) + \tilde{d}_H(B,C)$. By definition, $\tilde{d}_H$ is symmetric.
\end{proof}

\begin{rmk}
  $d_H(A,B) \leq \tilde{d}_H(A,B)$.
\end{rmk}




\subsection{Hausdorff distance in weighted categories}
\label{sec:hausdorff-wcat}

We continue the progression, generalizing Hausdorff distance from Lawvere metric spaces to weighted categories.

\begin{defn} \label{def:weight-embedding}
Let $P \hookrightarrow I \hookleftarrow Q$ 
be a pairwise embedding of weighted categories $P$ and $Q$ (Section~\ref{sec:Emb-wEmb}).
By Lemma~\ref{lem:embedding}, there is a corresponding pairwise embedding of Lawvere metric spaces.
Let $d_H^I(P,Q)$ and $\tilde{d}_H^I(P,Q)$
denote the Hausdorff distance and symmetric Hausdorff distance, respectively,
in $I$ between the isometric images of $P$ and $Q$ 
(Section~\ref{sec:hausdorff-lawvere}).
\end{defn}


\section{Embedding pairs and interleaving of functors} \label{sec:embedding-pair}

In this section we define categories of pairwise embeddings and a bicategory of embeddings. We use this to define interleavings of functors and prove statements which will imply the triangle inequality and stability for interleaving distance once we switch to the weighted setting in Section~\ref{sec:gromov}.

\subsection{The bicategory of embedding pairs}

Recall that for small categories $P$ and $Q$, $\cat{Emb}(P,Q)$ is
the full subcategory of $\cat{Cospan}(P,Q)$ whose arrows are embeddings (Section~\ref{sec:Emb-wEmb}).
We would like to define a bicategory, $\cat{Emb}$,  with all small categories as $0$-cells, and hom categories $\cat{Emb}(P,Q)$, as a sub-bicategory of $\cat{Cospan}$.
In $\cat{Cospan}$, horizontal composition is achieved via the push-out in $\cat{Cat}$.
Explicitly, the composition
\[
	\cat{Cospan}(Q,R) \times \cat{Cospan}(P,Q) \xrightarrow{\circ} \cat{Cospan}(P,R)
\]
is represented in the diagram
\[
	\begin{tikzcd}[row sep=tiny]
		& & J \circ I \\
		& I \arrow[ur,"i"] & & J \arrow[ul,"j"'] \\
		P \arrow[ur, hook, "F"] & & Q \arrow[ul, hook',"G"'] \arrow[ur, hook, "H"] & & R. \arrow[ul, hook', "K"']
	\end{tikzcd}
\]
Since $G$ and $H$ are injective on objects, we have
\[
	(J \circ I)_{0} \cong (J \setminus Q) \amalg Q \amalg (I \setminus Q).
\]
By~\cite{trnkova:1965}, $i$ and $j$ are embeddings.
In particular,
\[
	(J \circ I)(x,y) \cong
		\left\{
			\begin{array}{ll}
				I(x,y) & \text{if }x,y\in I \\
				J(x,y) & \text{if }x,y \in J
			\end{array}
		\right.
\]
with no ambiguity if $x,y \in Q$.

\begin{prop}\label{prop:pushout-reln}
Suppose $x \in I \setminus Q$ and $y \in J \setminus Q$. 
Then 
\[
	(J \circ I) ( x, y) = \prod_{q \in Q} J(q,y) \times I(x,q) /\sim
\]
where $\sim$ is the smallest equivalence relation in which $(q \xrightarrow{g} y, x \xrightarrow{f} q) \sim (q' \xrightarrow{g'} y, x \xrightarrow{f'} q')$ if there exists some $h : q \rightarrow q'$ such that the diagram
\[
	\begin{tikzcd}
		x \arrow[r,"f"] \arrow[dr,"f'"']
			& q   \arrow[d,"h"] \arrow[dr,"g"]	\\
			& q' \arrow[r,"g'"'] & y
	\end{tikzcd}
\]
commutes.
\end{prop}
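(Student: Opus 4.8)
The plan is to unwind the pushout in $\cat{Cat}$ explicitly on the level of hom-sets for the specific pair of objects $x \in I \setminus Q$, $y \in J \setminus Q$, and to recognize the answer as the claimed quotient of $\coprod_{q \in Q} J(q,y) \times I(x,q)$. The key structural fact, already quoted from~\cite{trnkova:1965}, is that $i: I \incl J \circ I$ and $j: J \incl J \circ I$ are embeddings and that $(J \circ I)_0 \cong (J \setminus Q) \amalg Q \amalg (I \setminus Q)$; so we know exactly what the objects are, and that hom-sets wholly inside $I$ or wholly inside $J$ are computed in $I$ or $J$ respectively. The only hom-sets not yet described are the ``mixed'' ones, $(J \circ I)(x,y)$ with $x \in I \setminus Q$ and $y \in J \setminus Q$ (and the symmetric case $x \in J \setminus Q$, $y \in I \setminus Q$, which is empty or handled symmetrically).

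First I would recall the general description of morphisms in a pushout of categories $I \amalg_Q J$: a morphism is represented by a finite alternating zig-zag-free composable word of morphisms from $I$ and from $J$, subject to the relations that (a) consecutive morphisms from the same category may be composed, (b) identities may be inserted or deleted, and (c) a morphism of $Q$ may be regarded as lying in either $I$ or $J$. A morphism $x \to y$ with $x \in I\setminus Q$ and $y \in J \setminus Q$ must therefore be represented by a word that starts in $I$ and ends in $J$. The crucial claim to establish is that every such word can be reduced to a word of length exactly two, namely $x \xrightarrow{f} q \xrightarrow{g} y$ with $q \in Q$, $f \in I(x,q)$, $g \in J(q,y)$. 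This follows because any longer word $x \to \cdots \to y$ contains at least one ``transition'' from $I$ to $J$; at the first such transition the intermediate object lies in $Q$, so we may split the word there as (a word $x \to q$ wholly in $I$, hence a single $I$-morphism by composing) followed by (a word $q \to y$ wholly in $J$, hence a single $J$-morphism). The only subtlety is a word that transitions $I \to J \to I \to \cdots$; but since $x \in I\setminus Q$ and $y \in J\setminus Q$, and every transition object lies in $Q$, one checks that after composing within each maximal $I$-block and $J$-block, an alternating word of the form $I, J, I, J, \dots$ ending in $J$ and starting in $I$ cannot be shortened below length two, and a word with three or more alternations has all its ``internal'' source/target objects in $Q$, so a $J$-block sandwiched by $I$-morphisms would force an object of $I \setminus Q$ to equal an object of $Q$ (because the $I$-morphisms' intermediate endpoints are in $Q$) — contradiction unless the $I$-blocks are trivial, collapsing the word. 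This reduction argument, showing length-two suffices, is the main obstacle; it is essentially the ``only one transition'' phenomenon and must be argued carefully against the combinatorics of free composites.

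Having reduced to length-two representatives, I would then identify the equivalence relation. Two representatives $x \xrightarrow{f} q \xrightarrow{g} y$ and $x \xrightarrow{f'} q' \xrightarrow{g'} y$ become equal in the pushout precisely when they are connected by the pushout relations, i.e. by finitely many moves of the form: insert an identity, or replace $(\,\cdots \to q \xrightarrow{g} y)$ and $(x \xrightarrow{f} q \to \cdots)$ using a $Q$-morphism $h: q \to q'$ to re-bracket $g = g' \circ h$ and $f' = h \circ f$. A single such move is exactly the generating relation displayed in the statement: there exists $h: q \to q'$ in $Q$ with $h \circ f = f'$ and $g' \circ h = g$. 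So the relation generated by these elementary moves is exactly $\sim$ as defined, and $(J\circ I)(x,y) = \coprod_{q\in Q} J(q,y)\times I(x,q)/\sim$. Finally I would check well-definedness of composition under $\sim$ (routine: both pre- and post-composition with morphisms respect the re-bracketing move) to confirm this really is the hom-set in the pushout category, and remark that the symmetric mixed case $(J\circ I)(x,y)$ with $x\in J\setminus Q$, $y\in I\setminus Q$ is empty since no word starting in $J$ and ending in $I$ exists when the cospans point the stated way — or rather, is handled by the same argument mutatis mutandis. I expect the verification that length-two words suffice to be the only nontrivial step; everything else is bookkeeping with the universal property of the pushout of categories.
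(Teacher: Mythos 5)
The paper does not actually supply a proof of this proposition --- it is asserted, with the surrounding discussion leaning on~\cite{trnkova:1965} for the fact that $i$ and $j$ are embeddings --- so your word-reduction strategy is a reasonable way to fill that in, and the second half of your argument (identifying the elementary pushout moves with the generating relation $(g'h,f)\sim(g',hf)$, and checking that composition respects $\sim$) is essentially correct.

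However, the step you yourself flag as the main obstacle --- that every alternating word from $x$ to $y$ reduces to one of length two --- is argued incorrectly. You claim that a word with three or more alternations leads to a contradiction (``would force an object of $I\setminus Q$ to equal an object of $Q$''). There is no such contradiction: a formal composite such as $x\xrightarrow{f_1}q_1\xrightarrow{g_1}q_2\xrightarrow{f_2}q_3\xrightarrow{g_2}y$ with $f_1,f_2$ in $I$, $g_1,g_2$ in $J$, and $q_1,q_2,q_3\in Q$ is a perfectly legitimate word in the free composite, and the task is to show it \emph{reduces}, not that it cannot occur. The mechanism that makes it reduce is the \emph{fullness} of the embeddings $Q\hookrightarrow I$ and $Q\hookrightarrow J$: every internal block of the word has both endpoints in $Q$ (since all transition objects lie in $Q$), so a $J$-morphism $g_1\in J(q_1,q_2)$ equals $H(h)$ for some $h\in Q(q_1,q_2)$ by fullness, hence $j(g_1)=j(H(h))=i(G(h))$ in the pushout, and the block can be re-typed as an $I$-morphism and absorbed into the adjacent $I$-blocks, shortening the word. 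Iterating leaves at most one transition, i.e.\ a representative $x\xrightarrow{f}q\xrightarrow{g}y$. Without fullness the proposition is false in general, so this is exactly the point where the hypothesis must enter; your proof never invokes it. A secondary slip: the reverse mixed hom-sets $(J\circ I)(y',x')$ for $y'\in J\setminus Q$, $x'\in I\setminus Q$ are not empty --- they admit the symmetric description $\coprod_{q}I(q,x')\times J(y',q)/\sim$, which the paper uses later when defining $w_{J\circ I}$ on $(I\circ J)(y,x)$.
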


\begin{rmk}
Another way to express the equivalence relation in Proposition~\ref{prop:pushout-reln} is to say that $(gh,f) \sim (g,hf)$ for all $f : I \rightarrow Q$, $g : q \rightarrow y$, and $h : q \rightarrow q'$.
\end{rmk}

\begin{notn}
We denote the equivalence class of $(g,f)$ by $[g,f]$.
\end{notn}

\begin{lem}\label{lem:comp-well-def}
With the notation above, if $g : q \rightarrow y$ and $f : x \rightarrow q$, then $g \circ f = [g,f]$ is well defined in $J \circ I$.
\end{lem}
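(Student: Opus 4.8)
The goal is to show that the putative composite of two $1$-cells in the bicategory $\cat{Emb}$, built via pushout in $\cat{Cat}$, is a genuine category — in particular that the composition of morphisms $g : q \to y$ in $J$ and $f : x \to q$ in $I$, recorded as the class $[g,f]$ in the hom-set $(J\circ I)(x,y)$ described by Proposition~\ref{prop:pushout-reln}, is well defined. The plan is to show that the assignment $(g,f)\mapsto [g,f]$ is constant on fibers of the maps used to build the pushout, i.e.\ that it does not depend on the choice of factorization of a morphism of $I$ or $J$ through an object of $Q$, and that it is insensitive to the identifications already present in $I$ and $J$.

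First I would recall precisely what data $g$ and $f$ carry: $f$ is a morphism of $I$ whose codomain $q$ lies in $Q \subset I$, and $g$ is a morphism of $J$ whose domain is the same object $q\in Q\subset J$. The only ambiguity is that $f$ and $g$ are \emph{given} morphisms (not merely composites), so on the face of it $[g,f]$ is literally the equivalence class of the pair $(g,f)$ in $\prod_{q\in Q} J(q,y)\times I(x,q)/\sim$, and there is nothing to check. The content of the lemma is therefore the compatibility one needs for this to deserve the name "composition": namely that if we replace $f$ by a morphism $f'$ obtained by precomposing an earlier factorization, or $g$ by $g\circ(\text{something})$, the class is unchanged. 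Concretely, I would verify: (i) if $f = h\circ f_0$ with $f_0 : x\to q_0$, $h : q_0\to q$, and $q_0,q\in Q$, then $(g, h\circ f_0)\sim (g\circ h, f_0)$ — which is exactly the generating relation of $\sim$ read off the commuting triangle with the horizontal map $h$; and symmetrically (ii) if $g = g_0\circ h$ with $h: q\to q_1$, then $(g_0\circ h, f)\sim(g_0, h\circ f)$. Each of these is an instance of the relation in Proposition~\ref{prop:pushout-reln} (or the reformulation $(gh,f)\sim(g,hf)$ in the following Remark), applied with the evident choice of connecting morphism $h$.

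Next I would assemble these into the statement that $(g,f)\mapsto [g,f]$ descends to a well-defined map along the coequalizer/pushout identifications, using the universal property of the pushout $(J\circ I)(x,y)$ as the quotient of $\coprod_{q} J(q,y)\times I(x,q)$ by $\sim$: since $\sim$ is defined to be the \emph{smallest} equivalence relation containing the generating moves above, any two representatives of the "same" composite are $\sim$-related by a finite zig-zag of such moves, hence define the same class. I would also note associativity and unitality of this operation follow immediately from the corresponding properties in $I$ and $J$ together with the compatibility relations, though strictly the lemma only asks for well-definedness; I would keep the proof to that.

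The main obstacle — and it is a mild one — is bookkeeping: one must be careful that when a factorization passes through \emph{two} intermediate objects $q, q'$ of $Q$ connected by some $h: q\to q'$, the two resulting pairs $(g', f')$ and $(g,f)$ are related by the single generating move with connecting morphism $h$, rather than requiring a longer chain; this is precisely the diagram drawn in Proposition~\ref{prop:pushout-reln}, so once that picture is invoked the verification is immediate. I would therefore structure the proof as: (1) unwind definitions so the claim becomes "$[g,f]$ is independent of the chosen factorizations"; (2) reduce independence to the single generating relation via the triangle diagram; (3) conclude by minimality of $\sim$. No deep input is needed beyond Proposition~\ref{prop:pushout-reln} and the embedding statement from~\cite{trnkova:1965} already cited.
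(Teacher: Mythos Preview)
You have misidentified what the lemma is asserting, and as a result your proposed verification is circular.

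The category $J\circ I$ is the \emph{abstract} pushout in $\cat{Cat}$; it already comes equipped with a composition law, and with canonical embeddings $i:I\hookrightarrow J\circ I$ and $j:J\hookrightarrow J\circ I$. Proposition~\ref{prop:pushout-reln} identifies the hom-set $(J\circ I)(x,y)$ with the quotient $\coprod_q J(q,y)\times I(x,q)/\!\sim$. The lemma then says that, under this identification, the \emph{actual composite} $j(g)\circ i(f)$ taken in the pushout category is the class $[g,f]$. The thing to check is that the assignment $(g,f)\mapsto j(g)\circ i(f)$ is constant on $\sim$-classes. The paper does this directly: given a generating relation witnessed by $h:q\to q'$ with $g=g'H(h)$ and $f'=G(h)f$, one computes in $J\circ I$
\[
j(g)\,i(f)=j(g')\,jH(h)\,i(f)=j(g')\,iG(h)\,i(f)=j(g')\,i(f'),
\]
using functoriality of $i,j$ and the pushout identity $jH=iG$.

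Your steps (i) and (ii), by contrast, never leave the quotient set: you verify that $(g,h\circ f_0)\sim(g\circ h,f_0)$ and the symmetric statement, but these are literally instances of the \emph{definition} of $\sim$. Showing that $\sim$-related pairs are $\sim$-related proves nothing. The missing ingredient is to work inside the pushout category with the embeddings $i,j$ and to use the commuting square $jH=iG$; without that, you never touch the composition of $J\circ I$ at all. Your closing remark about extending from the generating relation to all of $\sim$ by a zig-zag is correct and is indeed a small point the paper leaves implicit, but it is a finishing touch to the right argument, not a substitute for it.
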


\begin{proof}
To be careful, we use the embeddings $i : I \rightarrow J \circ I \leftarrow J: j$. 
Suppose $(f,g)\sim(f',g')$.  
Then there is some $h : q \rightarrow q'$ such that $g = g'H(h)$ in $J$ and $f' = G(h) f$ in $I$.
Therefore
\begin{eqnarray*}
	j(g)i(f) & = & j( g'H(h))i(f)	\\
		& = & j(g')jH(h)i(f)	\\
		& = & j(g')iG(h)i(f)	\\
		& = & j(g')i(G(h)f)	\\
		& = & j(g')i(f'). 
\end{eqnarray*}
\end{proof}

Since composites of embeddings are embeddings, it follows that the horizontal composition in $\cat{Cospan}$ in fact lies in $\cat{Emb}$.
Similarly, $\cat{Emb}$ is closed under the unitor and associator morphisms in $\cat{Cospan}$, and so forms a sub-bicategory.

For future reference, we note that the unitor $\cat{Emb}_{0} \rightarrow \cat{Emb}_{1}$ is given by $P \mapsto ( P \xrightarrow{1_{P}} P \xleftarrow{1_{P}} P)$.

\subsection{Interleavings of functors}
\label{sec:I-interleaving}

Let $\cat{C}$ be a category.
Let $F : P \rightarrow \cat{C}$, $G : Q \rightarrow \cat{C}$ be functors with common codomain.
Let $(P \hookrightarrow I \hookleftarrow Q) \in \cat{Emb}(P,Q)$.
Abusing notation, we say that $F$ and $G$ are $I$-\emph{interleaved} if there exists a functor $H : I \rightarrow \cat{C}$ such that the diagram
\[
	\begin{tikzcd}
		& I \arrow[dd, "H"]  \\
		P \arrow[dr, "F"'] \arrow[ur, hook]
		& &
		Q \arrow[ul, hook'] \arrow[dl, "G"] \\
		& \cat{C}
	\end{tikzcd}
\]
commutes.

The following result follows from the push-out property and will eventually yield the triangle inequality.

\begin{prop}
	Let $P \hookrightarrow I \hookleftarrow Q$ and $Q \hookrightarrow J \hookleftarrow R$ be embeddings.
	Let $F: P \rightarrow \cat{C}$, $G: Q \rightarrow \cat{C}$, $H : R \rightarrow \cat{C}$ be functors with common codomain.
	If $F$ and $G$ are $I$-interleaved and $G$ and $H$ are $J$-interleaved, then $F$ and $H$ are $J \circ I$-interleaved.
\end{prop}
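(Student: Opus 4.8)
The plan is to construct the required extension $K:J\circ I\to\cat{C}$ directly from the universal property of the push-out, exploiting that $J\circ I$ is, by the construction of horizontal composition in $\cat{Emb}$, precisely the push-out in $\cat{Cat}$ of $I\hookleftarrow Q\hookrightarrow J$ along the two given embeddings of $Q$. So the work is entirely formal once the hypotheses are unpacked.

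First I would fix notation: write $a:P\hookrightarrow I$, $b:Q\hookrightarrow I$, $c:Q\hookrightarrow J$, $e:R\hookrightarrow J$ for the four embeddings, and $i:I\to J\circ I$, $j:J\to J\circ I$ for the push-out legs, so that $i\circ b=j\circ c$. The hypothesis that $F$ and $G$ are $I$-interleaved supplies a functor $\widehat{F}:I\to\cat{C}$ with $\widehat{F}\circ a=F$ and $\widehat{F}\circ b=G$; likewise $G$ and $H$ being $J$-interleaved supplies $\widehat{H}:J\to\cat{C}$ with $\widehat{H}\circ c=G$ and $\widehat{H}\circ e=H$. The key observation is that $\widehat{F}\circ b=G=\widehat{H}\circ c$, i.e.\ the two witnesses restrict to the \emph{same} functor on the common copy of $Q$ — this is exactly the commutativity of the two given interleaving triangles. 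Hence $(\widehat{F},\widehat{H})$ is a cocone under the span $I\hookleftarrow Q\hookrightarrow J$ with vertex $\cat{C}$, and the universal property of the push-out yields a (unique) functor $K:J\circ I\to\cat{C}$ with $K\circ i=\widehat{F}$ and $K\circ j=\widehat{H}$.

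It then remains to check that $K$ exhibits the $(J\circ I)$-interleaving of $F$ and $H$. Recall that the object $J\circ I\in\cat{Emb}(P,R)$ is the cospan $P\xrightarrow{\,i\circ a\,}J\circ I\xleftarrow{\,j\circ e\,}R$, whose legs are embeddings because composites of embeddings are embeddings (established when $\cat{Emb}$ was shown closed under horizontal composition). Then
\[
  K\circ(i\circ a)=(K\circ i)\circ a=\widehat{F}\circ a=F,\qquad
  K\circ(j\circ e)=(K\circ j)\circ e=\widehat{H}\circ e=H,
\]
so the diagram exhibiting the $(J\circ I)$-interleaving of $F$ and $H$ commutes. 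I do not expect a genuine obstacle here: the argument is a formal consequence of the push-out property, and the only two points deserving attention are that the witnesses $\widehat{F}$ and $\widehat{H}$ agree on $Q$ (forced by commutativity of the hypothesized diagrams) and that the composite cospan legs are embeddings (already known). One could also remark that uniqueness of $K$ is not needed for the statement, only its existence.
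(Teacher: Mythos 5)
Your proof is correct and is exactly the argument the paper intends: the paper states the proposition with only the remark that it ``follows from the push-out property,'' and your write-up simply supplies the details of that argument (the two witnesses agree on $Q$ since both restrict to $G$, hence form a cocone on the span $I \hookleftarrow Q \hookrightarrow J$, and the universal property of the push-out $J\circ I$ produces the required extension). No gaps; the two points you flag---agreement on $Q$ and that the composite cospan legs are embeddings---are indeed the only things to check.
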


\begin{thm}[Stability] \label{thm:I-stability}
	If $F$ and $G$ are $I$-interleaved functors and $H : \cat{C} \rightarrow \cat{A}$ is a functor, then $HF$ and $HG$ are $I$-interleaved.
\end{thm}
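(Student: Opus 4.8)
The claim is that if $F\colon P\to\cat{C}$ and $G\colon Q\to\cat{C}$ are $I$-interleaved, and $H\colon\cat{C}\to\cat{A}$ is any functor, then $HF$ and $HG$ are $I$-interleaved. This is essentially a triviality once the definition of $I$-interleaving is unpacked, so the proof will be short; the only "work" is bookkeeping with the commuting cospan diagram.

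The plan is as follows. First I would recall that, by the definition of $I$-interleaving (Section~\ref{sec:I-interleaving}), there is a functor $K\colon I\to\cat{C}$ making the diagram with legs $i\colon P\incl I$, $j\colon Q\incl I$, $F$, $G$ commute, i.e.\ $Ki = F$ and $Kj = G$. Note that the embedding pair $(P\incl I\hookleftarrow Q)$ is the \emph{same} cospan in both the hypothesis and the conclusion — only the functors to the codomain change — so there is nothing to construct on the indexing side.

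Next I would exhibit the witnessing functor for $HF$ and $HG$: take $HK\colon I\to\cat{A}$. Since functor composition is associative, $(HK)i = H(Ki) = HF$ and $(HK)j = H(Kj) = HG$, so the diagram
\[
	\begin{tikzcd}
		& I \arrow[dd, "HK"]  \\
		P \arrow[dr, "HF"'] \arrow[ur, hook, "i"]
		& &
		Q \arrow[ul, hook', "j"'] \arrow[dl, "HG"] \\
		& \cat{A}
	\end{tikzcd}
\]
commutes. Hence $HF$ and $HG$ are $I$-interleaved, which is what we wanted.

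There is no real obstacle here: the statement is a formal consequence of the fact that post-composition with a fixed functor $H$ preserves commutativity of diagrams in $\cat{C}$, combined with the observation that the cospan $P\incl I\hookleftarrow Q$ is untouched. If one wanted, one could phrase this even more abstractly by saying that $H_\ast = H\circ(-)$ induces a map $\cat{C}^I\to\cat{A}^I$ compatible with restriction along $i$ and $j$, but the elementary diagram-chase above suffices.
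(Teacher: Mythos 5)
Your proof is correct and is exactly the evident argument the paper intends (the paper in fact omits the proof entirely, treating it as immediate): post-compose the witnessing functor $K\colon I\to\cat{C}$ with $H$ and use associativity of functor composition. Nothing further is needed.
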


\section{Categorification of the space of metric spaces} \label{sec:gromov}

We would like to quantify the $I$-interleavings introduced in Section~\ref{sec:I-interleaving}.
To that end, we introduce a weighted version of the bicategory $\cat{Emb}$.
We follow the same progression as in the previous section, layering in the added input of the weights.
That is, we define categories of weighted embedding pairs, and a bicategory of weighted embeddings. 
Furthermore, we use these to define a Gromov-Hausdorff distance of weighted categories, and an interleaving distance between functors, which we prove is stable.

\subsection{The category of weighted embedding pairs}

Recall that for weighted categories $P$ and $Q$ we have a category $\cat{wEmb}(P,Q)$ of weighted pairwise embeddings of $P$ and $Q$ (Section~\ref{sec:Emb-wEmb}).
The next proposition says, in particular, that the forgetful functor $\cat{wCat} \rightarrow \cat{Cat}$ creates push-outs.

\begin{prop}\label{prop:IoJ-in-wCat}
There is a natural weighting on $J \circ I$ that makes the natural functors $I, J \rightarrow J \circ I$ into embeddings.
\end{prop}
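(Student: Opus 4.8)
The plan is to define the weight of a morphism in $J \circ I$ to be the infimum of the sums of weights along any factorization of that morphism through the pushout, and then to check that (a) this is well-defined, (b) it satisfies the weighted-category axioms, and (c) with this weighting the canonical functors $i : I \to J \circ I$ and $j : J \to J \circ I$ are weight-preserving embeddings. Concretely, recall from the description preceding Proposition~\ref{prop:pushout-reln} that the only genuinely new hom-sets of $J \circ I$ are those $(J\circ I)(x,y)$ with $x \in I \setminus Q$ and $y \in J \setminus Q$, where a morphism is an equivalence class $[g,f]$ with $f : x \to q$ in $I$ and $g : q \to y$ in $J$ for some $q \in Q$. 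For such a class I would set $w([g,f]) = \inf \{\, w_I(f') + w_J(g') : [g',f'] = [g,f] \,\}$; on the hom-sets coming from $I$ alone or $J$ alone I would keep the original weights $w_I$ and $w_J$ (these agree on the common part $Q$ since both embeddings are weighted). It is worth noting that by the remark after Proposition~\ref{prop:pushout-reln} the generating moves $(gh,f)\sim(g,hf)$ do not change $w_I(f')+w_J(g')$ when $h$ is an isomorphism but can only decrease or leave unchanged the relevant sums in general, so the infimum is the honest invariant to take.

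First I would verify that identities have weight $0$: an identity on an object of $I$ (resp.\ $J$) is an identity in $I$ (resp.\ $J$), so this is immediate from axiom (1) for $I$ and $J$, and there are no "mixed" identities. Next, the triangle inequality $w(\beta\alpha) \le w(\beta) + w(\alpha)$: I would split into cases according to where the source, middle, and target objects live. If all three are in $I$, or all three in $J$, it is axiom (2) for $I$ or $J$. The interesting cases are when the middle object lies in $Q$ while the endpoints straddle the two sides — here a composite of the form $[g,f]$ arises literally as $j(g) \circ i(f)$ (Lemma~\ref{lem:comp-well-def}), so $w([g,f]) \le w_I(f) + w_J(g)$ by definition of the infimum; and more generally one composes two morphisms each already presented as an infimum, chooses near-optimal factorizations, splices them, and uses the triangle inequalities in $I$ and $J$ together with the fact that the spliced factorization is one of the candidates in the infimum defining the weight of the composite. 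Taking $\eps \to 0$ gives the inequality.

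Finally, that $i$ and $j$ are embeddings of weighted categories: they are already embeddings of the underlying categories by Trnková's theorem (cited in the excerpt), so it remains to check they are weight-preserving. For a morphism of $I$ not landing in a mixed hom-set this is by construction. The one point needing an argument is that for $f : x \to y$ with $x,y \in I$ the weight is not accidentally lowered by some exotic factorization through $J$ — but any factorization of $i(f)$ in $J\circ I$ must, by the explicit description of composites and of hom-sets, reduce to a factorization inside $I$ (a zig-zag through $J\setminus Q$ cannot return to $I\setminus Q$ since there are no morphisms $J \setminus Q \to I\setminus Q$), so $w(i(f)) = w_I(f)$, and symmetrically for $j$. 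The "natural" in the statement I would read as functoriality of this construction in the cospan, i.e.\ compatibility with the unitors and associators of $\cat{Emb}$; this follows formally once the weighting is shown to be determined by the universal (pushout) property together with preservation along $i,j$.

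\textbf{Main obstacle.} The delicate step is establishing that the infimum-weighting is genuinely well-defined and simultaneously makes both $i$ and $j$ isometric onto their images — in other words, controlling which factorizations can occur. Everything hinges on the combinatorial fact, visible from the description of $(J\circ I)(x,y)$ and the pushout of objects $(J\circ I)_0 \cong (J\setminus Q) \amalg Q \amalg (I\setminus Q)$, that morphisms only ever flow from the $I$-side through $Q$ to the $J$-side and never back; I expect the bulk of the real work to be a careful case analysis of composites formalizing exactly this, after which the weighted-category axioms and the preservation statements fall out.
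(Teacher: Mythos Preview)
Your definition of the weighting coincides with the paper's, and your overall case-analysis strategy for sub-additivity is the same. However, there is a genuine gap, and it comes from a false structural claim. You assert that ``a zig-zag through $J\setminus Q$ cannot return to $I\setminus Q$ since there are no morphisms $J \setminus Q \to I\setminus Q$.'' This is wrong: the pushout $J\circ I$ has hom-sets $(J\circ I)(y,x)$ for $y\in J\setminus Q$, $x\in I\setminus Q$, described symmetrically to Proposition~\ref{prop:pushout-reln} as classes $[k,h]$ with $h:y\to q'$ in $J$ and $k:q'\to x$ in $I$. The paper defines their weights ``in a symmetric fashion.''

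Because of this, your sub-additivity check misses the crucial ``out and back'' case: $[g,f]:x\to y$ with $x\in I\setminus Q$, $y\in J\setminus Q$, composed with $[k,h]:y\to z$ with $z\in I\setminus Q$. The composite $[k,h]\circ[g,f]$ lies in $I(x,z)$, whose weight is \emph{defined} to be $w_I$, not as an infimum over factorizations; so your splicing argument (``the spliced factorization is one of the candidates in the infimum defining the weight of the composite'') does not apply. The paper's argument here is the real content of the proof: the middle segment $hg:q\to q'$ is a morphism in $J$ between objects of $Q$, and since $Q\hookrightarrow J$ is \emph{full} and weight-preserving, $hg$ lies in $Q$ with $w_Q(hg)=w_J(hg)\le w_J(h)+w_J(g)$; hence the composite is $k(hg)f$ in $I$ with $w_I(k(hg)f)\le w_I(k)+w_J(h)+w_J(g)+w_I(f)$, and taking infima over representatives gives sub-additivity. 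Conversely, your worry about $i$ failing to be weight-preserving is misplaced: since you (and the paper) set $w_{J\circ I}=w_I$ on morphisms of $I$ by definition, $i$ is isometric automatically; the substantive thing to check is precisely the sub-additivity case you skipped.
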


\begin{proof}
If $x,y \in I$, then $(J \circ I) (x,y) = I(x,y)$, so we let $w_{J \circ I} = w_{I} : (J \circ I) (x,y) \rightarrow [0,\infty]$.
Similarly, if $x,y \in J$, then $w_{J \circ I} = w_{J} : (J \circ I) (x,y) \rightarrow [0,\infty]$.
This is well defined in $Q$ since the functors $Q \hookrightarrow I,J$ are weight preserving.
Suppose $x \in I \setminus Q$ and $y \in J \setminus Q$.
Let $[g,f] \in (J \circ I)(x,y)$, where $f : x \rightarrow q$ and $g : q \rightarrow y$ for some $q \in Q$.
We set
\[
	w_{J \circ I} ([g,f]) = \inf_{(g',f') \sim (g,f)}
		w_{J}(g') + w_{I}(f').
\]
We define $w_{J \circ I} : (I\circ J) (y,x) \rightarrow [0,\infty]$ in a symmetric fashion.

Clearly, the weight of all identity arrows is zero, since they inherit their weights from either $I$ or $J$.

To see that $w_{J \circ I}$ satisfies sub-additivity, we need to check three cases; there are another three cases that are symmetric, and for composition of morphisms both in $I$ or both in $J$, sub-additivity follows by inheritance. 

First, suppose that $[g,f] : x \rightarrow y$, $h : y \rightarrow z$, where $x \in I$ and $y,z \in J$.
Clearly $h[g,f] = [hg,f]$.  
Note that if $(g'',f'') \sim (g,f)$, then $(hg'',f'') \sim (hg,f)$.  
It follows that 
\begin{eqnarray*}
	w_{J \circ I} ( h[g,f] )
		& = & w_{J \circ I} ( [hg,f] ) \\
		& = & \inf_{(g',f') \sim (hg,f)}
			w_{J}(g') + w_{I}(f') \\
		& \leq & \inf_{(g'',f'') \sim (g,f)}
			w_{J}(hg'') + w_{I}(f'') \\
		& \leq & \inf_{(g'',f'') \sim (g,f)}
			w_{J}(h) + w_{J}(g'') + w_{I}(f'') \\
		& = & w_{J}(h) + \inf_{(g'',f'') \sim (g,f)}
			w_{J}(g'') + w_{I}(f'') \\
		& = & w_{J \circ I}(h) + w_{J \circ I}([g,f]).
\end{eqnarray*}
If $k : w \rightarrow x$, then the argument that $w_{J \circ I} ([g,f]k) \leq w_{J \circ I}([g,f]) + w_{J \circ I}(k)$ is follows the same lines.

Suppose $[g,f]: x \rightarrow y$ and $[k,h]: y \rightarrow z$ for $x,z \in I$, $y \in J$.
Then $[k,h]\circ[g,f] = k(hg)f$ (composition in $I$) for any representatives $(q' \xrightarrow{k} z, y \xrightarrow{h} q') \in [k,h]$ and $(g,f) \in [g,f]$.
\[
	\begin{tikzcd}[row sep=tiny]
		x \arrow[dr,"f"] \\
		& q \arrow[dr,"g"] \arrow[dd, dashed, "hg"'] \\
		& & y \arrow[dl,"h"] \\
		& q' \arrow[dl,"k"] \\
		z
	\end{tikzcd}
\]
We are grouping $hg$ to indicate that the composite is a morphism in $I$.
Since $Q \hookrightarrow I \hookrightarrow J \circ I$ and $Q \hookrightarrow J$, 
\begin{equation*}
	w_{J \circ I} (hg)  =  w_{I} (hg)	
						 =  w_{Q}(hg)
						 =  w_{J}(hg)
						 \leq  w_{J}(h) + w_{J}(g).
\end{equation*}
Therefore
\[
	w_{J \circ I}([k,h]\circ[g,f])	 \leq w_{I}(k) + w_{J}(h) + w_{J}(g) + w_{I}(f)
\]
for any choice of representatives over the equivalence classes.
It follows that 
\begin{eqnarray*}
	w_{J \circ I}([k,h]\circ[g,f])
		& \leq & \inf w_{I}(k) + w_{J}(h) + w_{J}(g) + w_{I}(f)	\\
		& = & w_{J \circ I}([k,h]) + w_{J \circ I}([g,f])
\end{eqnarray*}
where the infimum is taken over all $(k',h') \sim (k,h)$ and $(f',g')\sim(f,g)$.
\end{proof}

\begin{prop}
Equipped with the weighting of Proposition~\ref{prop:IoJ-in-wCat}, $J \circ I$ is the push-out of $I \hookleftarrow Q \hookrightarrow J$ in $\cat{wCat}$.
\end{prop}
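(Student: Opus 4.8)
The plan is to show that the weighted category $J \circ I$ constructed in Proposition~\ref{prop:IoJ-in-wCat}, together with the two embeddings $i : I \hookrightarrow J \circ I$ and $j : J \hookrightarrow J \circ I$, satisfies the universal property of the push-out in $\cat{wCat}$. Since the forgetful functor $\cat{wCat} \to \cat{Cat}$ sends $J \circ I$ to the underlying push-out in $\cat{Cat}$, the cocone $(i,j)$ is already known to be universal among \emph{all} functors out of $I$ and $J$ agreeing on $Q$. So the only thing left to verify is that the unique mediating functor into any competing weighted cocone is \emph{nonexpansive}. Concretely, suppose we are given a weighted category $L$ and nonexpansive functors $\alpha : I \to L$, $\beta : J \to L$ that agree on $Q$ (i.e.\ $\alpha|_Q = \beta|_Q$ as weighted functors). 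In $\cat{Cat}$ there is a unique functor $\gamma : J \circ I \to L$ with $\gamma i = \alpha$ and $\gamma j = \beta$; I must show $w_L(\gamma(\varphi)) \leq w_{J\circ I}(\varphi)$ for every morphism $\varphi$ of $J \circ I$.

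First I would dispose of the easy cases. If $\varphi$ is a morphism with both endpoints in $I$ (equivalently, $\varphi = i(f)$ for $f$ in $I$), then $\gamma(\varphi) = \alpha(f)$ and $w_{J\circ I}(\varphi) = w_I(f)$ by the definition of the weighting, so nonexpansiveness of $\alpha$ gives the bound; symmetrically for morphisms inside $J$ via $\beta$. The substantive case is a morphism $\varphi = [g,f] : x \to y$ with $x \in I \setminus Q$ and $y \in J \setminus Q$, where $f : x \to q$ lives in $I$, $g : q \to y$ lives in $J$, and $q \in Q$. Here $\gamma([g,f]) = \beta(g) \circ \alpha(f)$ in $L$ (this is exactly the content of Lemma~\ref{lem:comp-well-def} transported along $\gamma$, and it is independent of the chosen representative). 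Then, for \emph{any} representative $(g',f') \sim (g,f)$, sub-additivity of $w_L$ and nonexpansiveness of $\alpha,\beta$ give
\begin{equation*}
	w_L(\gamma([g,f])) = w_L(\beta(g') \circ \alpha(f')) \leq w_L(\beta(g')) + w_L(\alpha(f')) \leq w_J(g') + w_I(f').
\end{equation*}
Taking the infimum over all such representatives yields $w_L(\gamma([g,f])) \leq w_{J\circ I}([g,f])$, which is the required inequality. The morphisms $[g,f] : y \to x$ going the other way are handled by the symmetric argument.

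I expect no serious obstacle here: the whole point is that the weighting on $J \circ I$ was \emph{defined} as an infimum of sums of weights over representatives, which is precisely the largest weighting compatible with $i$ and $j$ being nonexpansive, so it is automatically the one that makes the mediating map nonexpansive. The only mild subtlety to be careful about is the well-definedness of $\gamma$ on equivalence classes and the identity $\gamma([g,f]) = \beta(g)\alpha(f)$ — but this is inherited directly from the $\cat{Cat}$-level push-out and from Lemma~\ref{lem:comp-well-def}, so it requires no new work. One should also note in passing that the construction of Proposition~\ref{prop:IoJ-in-wCat} already shows $(i,j)$ is a cocone in $\cat{wCat}$ (the embeddings are weight-preserving, hence nonexpansive), so together with the mediating-map argument above the universal property is complete.
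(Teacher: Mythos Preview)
Your proposal is correct and follows essentially the same approach as the paper: reduce to showing that the $\cat{Cat}$-level mediating functor is nonexpansive, dispose of morphisms lying entirely in $I$ or $J$ using that $i,j$ are weight-preserving and $\alpha,\beta$ nonexpansive, and for a cross-over morphism $[g,f]$ bound $w_L(\beta(g')\alpha(f'))$ by $w_J(g')+w_I(f')$ for every representative and pass to the infimum. The paper's proof is the same argument with slightly different notation.
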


\begin{proof}
We have shown that $J \circ I \in \cat{wCat}$.
The underlying unweighted category is the pushout in $\cat{Cat}$.
Consider the following diagram in $\cat{Cat}$,
\[
	\begin{tikzcd}
		Q \arrow[r, hook,"G"] \arrow[d, hook, "H"'] & I \arrow[d, hook, "\lambda"] \arrow[ddr,bend left=30,"\alpha"] \\
		J \arrow[r,hook,"\gamma"] \arrow[drr,bend right=30,"\beta"'] & J \circ I \arrow[dr,dashed,"\exists ! \varphi"] \\
		& & C
	\end{tikzcd}
\]
in which $C$ is a weighted category and $\alpha$ and $\beta$ are nonexpansive.
The unique functor $\varphi:J \circ I \rightarrow C$ of unweighted categories exists by the universal property of the push-out, and it simply remains to verify that $\varphi$ is nonexpansive.

There are essentially three cases to check.
If $f=\lambda(f')$, then $\varphi(f) = \varphi(\lambda(f'))=\alpha (f')$.
Since $\lambda$ is weight-preserving and $\alpha$ is nonexpansive, we have
\[
	w_{C}(\varphi(f)) = w_{C}(\alpha(f')) \leq w_{I}(f') = w_{I\circ J}(f).
\]
Similarly, if $g$ is in the image of $\gamma$, then $w_{C}(\varphi(g)) \leq w_{I\circ J}(g)$.

The remaining case is $[g,f]:x\rightarrow y$ with $x \in I \setminus Q$ and $y \in J \setminus Q$ (or vice versa).
That is, we must show that $w_{C}(\varphi([g,f])) \leq w_{J \circ I}([g,f])$. 
By Lemma~\ref{lem:comp-well-def}, $w_{C}([g,f]) = w_{C}(\beta(g)\alpha(f)) \leq w_{C}(\beta(g)) + w_{C}(\alpha(f)) \leq w_{J}(g) + w_{I}(f)$.
By the universal property of the infimum, $w_{C}([g,f]) \leq w_{J \circ I}([g,f])$, as desired.
\end{proof}

\subsection{The Gromov-Hausdorff bicategory}
\label{sec:bicatGH}

In this section we define the \emph{Gromov-Hausdorff bicategory}, $\cat{wEmb}$.
Let $\cat{wEmb}$ have all small weighted categories for $0$-cells, and hom-categories $\cat{wEmb}(P,Q)$.
Then $\cat{wEmb}$ is a bicategory.

Recall that for a pairwise embedding of weighted categories $P \hookrightarrow I \hookleftarrow Q$, we have a Hausdorff distance $d_H^I(P,Q)$ and a symmetric Hausdorff distance $\tilde{d}_H^I(P,Q)$ (Definition~\ref{def:weight-embedding}).

\begin{prop} \label{prop:gh-is-wcat}
The functions $w_{GH}:\cat{wEmb}(P,Q) \rightarrow [0,\infty]$ defined by $w_{GH}(P \hookrightarrow I \hookleftarrow Q) = d_H^I(P,Q)$ determine a weighted (large, locally small) category structure on the $1$-skeleton of $\cat{wEmb}$.
\end{prop}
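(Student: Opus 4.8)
The plan is to verify the two axioms of a weighted category (Definition~\ref{defn:weighted-category}) for the function $w_{GH}$ on the $1$-skeleton of $\cat{wEmb}$. The $0$-cells are small weighted categories and the $1$-cells from $P$ to $Q$ are the weighted pairwise embeddings $P \hookrightarrow I \hookleftarrow Q$, with identity $1$-cell on $P$ given by the unitor $P \xrightarrow{1_P} P \xleftarrow{1_P} P$ and composition given by the push-out $J \circ I$ (which lies in $\cat{wEmb}$ by Proposition~\ref{prop:IoJ-in-wCat}). Since the $1$-skeleton is a category (it is the underlying category of the bicategory $\cat{wEmb}$), the only thing to check is that the proposed weights behave correctly with respect to identities and composition.

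First I would check the identity axiom: $w_{GH}$ of the unitor $P \xrightarrow{1_P} P \xleftarrow{1_P} P$ is $d_H^P(P,P)$, the Hausdorff distance in $P$ between the image of $P$ and itself, which is $0$ by the first clause of Theorem~\ref{thm:hausdorff} (applied to the induced Lawvere metric space of $P$). Next I would check subadditivity: given $P \hookrightarrow I \hookleftarrow Q$ and $Q \hookrightarrow J \hookleftarrow R$, I must show
\[
  d_H^{J \circ I}(P,R) \leq d_H^{I}(P,Q) + d_H^{J}(Q,R).
\]
Write $P$, $Q$, $R$ also for the isometric images inside $J \circ I$ (legitimate since $I \hookrightarrow J \circ I \hookleftarrow J$ are weighted embeddings, hence isometric embeddings of the induced Lawvere metric spaces by Lemma~\ref{lem:embedding}). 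By Lemma~\ref{lem:hausdorff-lawvere} it suffices, for each $r > d_H^{I}(P,Q)$ and $s > d_H^{J}(Q,R)$, to show $P \subset {}^{r+s}R$ and $R \subset P^{r+s}$ as subsets of $J \circ I$. From $r > d_H^I(P,Q)$ we get $P \subset {}^r Q$ and $Q \subset P^r$ in $I$, and from $s > d_H^J(Q,R)$ we get $Q \subset {}^s R$ and $R \subset Q^s$ in $J$; the key point is that these containments persist in $J \circ I$ because the induced metric on $I$ (resp.\ $J$) agrees with the restriction of the induced metric on $J \circ I$ to $I$ (resp.\ $J$) — this is exactly the content of $I \hookrightarrow J \circ I$ being an isometric embedding. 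Then Lemma~\ref{lem:offset} gives $P \subset {}^r Q \subset {}^r({}^s R) \subset {}^{r+s} R$ and $R \subset Q^s \subset (P^r)^s \subset P^{r+s}$, all inside $J \circ I$. Taking the infimum over such $r,s$ yields the inequality; the symmetric statement for $\tilde d_H$ follows the same way (or by the argument already given in the proof of the symmetric version of Theorem~\ref{thm:hausdorff}).

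The main obstacle is the subtlety hidden in "these containments persist in $J \circ I$": one must be sure that the Hausdorff distance $d_H^{J\circ I}(P,R)$ is computed using offsets taken in all of $J \circ I$, not just within $I$ or $J$, and that no "shortcut" morphism through the new objects of $J \circ I$ could a priori *decrease* a distance between two points of $P$ (or of $R$) — but in fact that cannot hurt us, since decreasing distances only makes offsets larger, hence the containments only easier; and it cannot help an adversary either, since we only need upper bounds on the two one-sided Hausdorff quantities. What genuinely needs care is that a point $p \in P \subset I$ has its distance to $R \subset J$ realized (up to $\eps$) by a zig-zag through $Q$, i.e.\ that there is a bound of the form $d_{J \circ I}(p, \rho) \leq d_I(p,q) + d_J(q,\rho)$ for suitable $q \in Q$; this is just the triangle inequality in the Lawvere metric space $J \circ I$ together with the fact that the embeddings are isometric, so it reduces to bookkeeping about which induced metrics agree with which restrictions — exactly the compatibility packaged in Proposition~\ref{prop:IoJ-in-wCat}. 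Once that is in hand the estimate above goes through, and $w_{GH}$ is a weighted category structure on the $1$-skeleton of $\cat{wEmb}$.
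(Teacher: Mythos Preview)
Your proof is correct and follows essentially the same plan as the paper's: verify the identity axiom via $d_H^P(P,P)=0$, then establish the subadditivity inequality $d_H^{J\circ I}(P,R) \leq d_H^I(P,Q) + d_H^J(Q,R)$ using that $I \hookrightarrow J\circ I \hookleftarrow J$ are isometric embeddings of the induced Lawvere metric spaces (Lemma~\ref{lem:embedding} applied to Proposition~\ref{prop:IoJ-in-wCat}).

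The only difference is packaging. The paper invokes Theorem~\ref{thm:hausdorff} directly: since $P,Q,R$ are all subsets of the Lawvere metric space underlying $J\circ I$, the triangle inequality for Hausdorff distance gives $d_H^{J\circ I}(P,R) \leq d_H^{J\circ I}(P,Q) + d_H^{J\circ I}(Q,R)$, and isometry of the embeddings converts the right-hand side to $d_H^I(P,Q) + d_H^J(Q,R)$. You instead unpack that triangle inequality by hand, working with offsets via Lemmas~\ref{lem:offset} and~\ref{lem:hausdorff-lawvere}. This is a faithful re-derivation of Theorem~\ref{thm:hausdorff} in situ, so it is longer but equivalent. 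Your closing paragraph about ``shortcuts'' is unnecessary: because $I \hookrightarrow J\circ I$ is a \emph{full} weighted embedding, hom-sets between points of $I$ are literally unchanged in $J\circ I$, so there is nothing to worry about---Lemma~\ref{lem:embedding} already settles the matter.
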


\begin{proof}

The identity arrow on $P$ is the diagram $P = P = P$
and $d_H^I(P,P)=0$.

For subadditivity, consider the following commutative diagram
of weighted categories.
\[
	\begin{tikzcd}[row sep=tiny]
		& & J \circ I \\
		& I \arrow[ur,hook,"i"] & & J \arrow[ul,hook',"j"'] \\
		P \arrow[ur, hook, "F"] & & Q \arrow[ul, hook',"G"'] \arrow[ur, hook, "H"] & & R. \arrow[ul, hook', "K"']
	\end{tikzcd}
\]
By Lemma~\ref{lem:embedding} we may also consider this to be the corresponding commutative diagram of Lawvere metric spaces.

We must show that $w_{GH}(J \circ I) \leq w_{GH}(I) + w_{GH}(J)$, or equivalently, that 
\begin{equation} \label{eq:triangle}
  d_H^{J \circ I}(P,R) \leq d_H^{I}(P,Q) + d_H^{J}(Q,R).
\end{equation}
Using the triangle inequality in $J \circ I$ and 
Definition~\ref{def:hausdorff-lawvere} 
we have that
$d_H^{J \circ I}(P,R) \leq d_H^{J\circ I}(P,Q) + d_H^{J\circ I}(Q,R) = d_H^I(P,Q) + d_H^J(Q,R).$
\end{proof}

This weighted category has an associated Lawvere metric space (see Section~\ref{sec:wcat-to-lawvere}). Call the corresponding Lawvere metric the \emph{Gromov-Hausdorff distance}. In detail:

\begin{defn} \label{def:gh-distance}
Let $P,Q$ be weighted categories. The \emph{Gromov-Hausdorff distance} between $P$ and $Q$ is given by
\begin{equation*}
  d_{GH}(P,Q) = \inf_{I \in \cat{wEmb}(P,Q)} d_H^I(P,Q).
\end{equation*}
\end{defn}

We remark that as in the usual Gromov-Hausdorff distance, we can restrict to weighted categories $I$ whose underlying category is $P \amalg Q$.


\begin{thm} \label{thm:gh-lawvere}
  The class of weighted categories with the Gromov-Hausdorff distance is a Lawvere metric space.
\end{thm}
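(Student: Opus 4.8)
The plan is to show that $d_{GH}$ satisfies the two axioms of Definition~\ref{defn:lawvere}: that $d_{GH}(P,P) = 0$, and that the triangle inequality holds. The first axiom is immediate: the identity pairwise embedding $P = P = P$ witnesses $d_H^P(P,P) = 0$, so the infimum defining $d_{GH}(P,P)$ is $0$. (One should note in passing that $P$ being a small weighted category, $\cat{wEmb}(P,Q)$ is nonempty for any $Q$ — e.g. take the coproduct $P \amalg Q$ with the evident weighting — so the infimum is over a nonempty set and thus well-defined in $[0,\infty]$.) The real content is the triangle inequality.

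For the triangle inequality, the strategy is to invoke Proposition~\ref{prop:gh-is-wcat}: we have already established that $w_{GH}$ makes the $1$-skeleton of $\cat{wEmb}$ into a weighted (large, locally small) category. Now apply the construction of Section~\ref{sec:wcat-to-lawvere}, which takes a weighted category to its induced Lawvere metric space via $d(x,y) = \inf_{f : x \to y} w(f)$. Applied to the $1$-skeleton of $\cat{wEmb}$, this yields exactly $d_{GH}(P,Q) = \inf_{I \in \cat{wEmb}(P,Q)} d_H^I(P,Q)$, and the general fact that the induced metric of any weighted category is a Lawvere metric space gives both axioms at once. Concretely, unwinding this: given $P, Q, R$ and $\eps > 0$, choose $I \in \cat{wEmb}(P,Q)$ with $d_H^I(P,Q) < d_{GH}(P,Q) + \eps$ and $J \in \cat{wEmb}(Q,R)$ with $d_H^J(Q,R) < d_{GH}(Q,R) + \eps$; form the horizontal composite $J \circ I \in \cat{wEmb}(P,R)$ (which lies in $\cat{wEmb}$ by Proposition~\ref{prop:IoJ-in-wCat} and the fact that composites of embeddings are embeddings); then by inequality~\eqref{eq:triangle} from Proposition~\ref{prop:gh-is-wcat},
\[
  d_{GH}(P,R) \leq d_H^{J \circ I}(P,R) \leq d_H^I(P,Q) + d_H^J(Q,R) < d_{GH}(P,Q) + d_{GH}(Q,R) + 2\eps.
\]
Letting $\eps \to 0$ gives the triangle inequality.

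I expect the main obstacle to be essentially bookkeeping rather than a genuine difficulty, since the hard analytic work — showing that $d_H^{J \circ I}(P,R) \leq d_H^I(P,Q) + d_H^J(Q,R)$, which relies on the subtle claim that the embeddings $I, J \hookrightarrow J \circ I$ are isometric so that the Hausdorff distances computed in $J \circ I$ agree with those computed in $I$ and $J$ — has already been carried out in Propositions~\ref{prop:IoJ-in-wCat} and~\ref{prop:gh-is-wcat}. The only point requiring a little care is the size issue: $\cat{wEmb}$ has a proper class of $0$-cells and its hom-categories are small, so the $1$-skeleton is a large but locally small weighted category; one should check that the passage to the induced Lawvere metric space in Section~\ref{sec:wcat-to-lawvere} is stated generally enough to apply (it is — Definition~\ref{defn:lawvere} explicitly allows $X$ to be a proper class), and that restricting the infimum to the small set $\cat{wEmb}(P,Q)$ keeps $d_{GH}(P,Q)$ a well-defined element of $[0,\infty]$. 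With those observations in place the theorem follows directly.
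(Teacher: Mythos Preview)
Your proposal is correct and matches the paper's approach: the theorem is stated without an explicit proof in the paper precisely because it follows immediately from Proposition~\ref{prop:gh-is-wcat} together with the general construction of Section~\ref{sec:wcat-to-lawvere}, and your write-up simply makes that implication explicit (with a careful note on size and on nonemptiness of $\cat{wEmb}(P,Q)$ that the paper leaves implicit).
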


Now for the symmetric version. 

\begin{prop} \label{prop:sgh-is-wcat}
The functions $\tilde{w}_{GH}:\cat{wEmb}(P,Q) \rightarrow [0,\infty]$ defined by $\tilde{w}_{GH}(P \hookrightarrow I \hookleftarrow Q) = \tilde{d}_H^I(P,Q)$ determine a weighted (large, locally small) category structure on the $1$-skeleton of $\cat{wEmb}$.
\end{prop}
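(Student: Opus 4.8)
The plan is to mirror the proof of Proposition~\ref{prop:gh-is-wcat} almost verbatim, replacing $d_H^I$ everywhere by $\tilde d_H^I$ and invoking the symmetric Hausdorff distance results from Section~\ref{sec:hausdorff-lawvere}. First I would record the identity case: the identity $1$-cell on $P$ is the diagram $P = P = P$, and $\tilde d_H^P(P,P) = \max(d_H(P,P),d_H(P,P)) = 0$, so $\tilde w_{GH}$ vanishes on identities.

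Next I would verify subadditivity along a horizontal composition $P \hookrightarrow I \hookleftarrow Q$ and $Q \hookrightarrow J \hookleftarrow R$ with composite $J \circ I$, carrying over the weighting of Proposition~\ref{prop:IoJ-in-wCat}. By Lemma~\ref{lem:embedding} this is simultaneously a commuting diagram of Lawvere metric spaces, so the images of $P$, $Q$, $R$ all sit isometrically inside $J \circ I$. We must show
\begin{equation*}
  \tilde d_H^{J \circ I}(P,R) \leq \tilde d_H^{I}(P,Q) + \tilde d_H^{J}(Q,R).
\end{equation*}
Since $\tilde d_H$ is a genuine (symmetric) Lawvere metric on the class of subspaces of $J \circ I$ by the symmetric version of Theorem~\ref{thm:hausdorff}, the triangle inequality inside $J \circ I$ gives $\tilde d_H^{J \circ I}(P,R) \leq \tilde d_H^{J \circ I}(P,Q) + \tilde d_H^{J \circ I}(Q,R)$. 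It then remains only to identify $\tilde d_H^{J \circ I}(P,Q)$ with $\tilde d_H^{I}(P,Q)$ and $\tilde d_H^{J \circ I}(Q,R)$ with $\tilde d_H^{J}(Q,R)$: because $I \hookrightarrow J \circ I$ is an embedding of weighted categories, hence (Lemma~\ref{lem:embedding}) an isometric embedding of Lawvere metric spaces, distances among points of $P$ and $Q$ are the same whether computed in $I$ or in $J \circ I$, and symmetrically for $Q$ and $R$ inside $J$.

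The one point needing a touch more care than in the non-symmetric case is exactly this restriction claim. For $d_H$ one only needs that offsets computed in the subspace agree with the intersection of offsets computed in the ambient space, which follows from the embedding being full and weight-preserving; the same argument applies to both the future and the past offsets, so it applies to $\tilde d_H$ as well. I would phrase this by simply remarking that the proof is \emph{verbatim} that of Proposition~\ref{prop:gh-is-wcat}, using the symmetric Hausdorff distance and its triangle inequality in place of the asymmetric one. Closing the identity and subadditivity checks gives the weighted (large, locally small) category structure on the $1$-skeleton of $\cat{wEmb}$, as claimed. The main — indeed only — obstacle is bookkeeping: making sure that passing from $I$ (or $J$) to the pushout $J \circ I$ does not change the relevant Hausdorff distances, which is guaranteed by Lemma~\ref{lem:embedding} together with Proposition~\ref{prop:IoJ-in-wCat}.
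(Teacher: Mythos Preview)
Your proposal is correct. The only difference from the paper's proof is how the subadditivity is organized: you apply the symmetric Hausdorff triangle inequality directly in $J\circ I$ and then identify $\tilde d_H^{J\circ I}(P,Q)=\tilde d_H^{I}(P,Q)$ via the embedding (exactly mirroring the proof of Proposition~\ref{prop:gh-is-wcat}), whereas the paper instead decomposes $\tilde d_H^{J\circ I}(P,R)=\max\bigl(d_H^{J\circ I}(P,R),\,d_H^{J\circ I}(R,P)\bigr)$, applies the already-established asymmetric inequality~\eqref{eq:triangle} to each component, and then uses $\max(a+b,\,c+d)\le\max(a,c)+\max(b,d)$ to recombine. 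Both routes are short and equally valid; yours is the more literal ``symmetric copy'' of Proposition~\ref{prop:gh-is-wcat}, while the paper's avoids re-running the Hausdorff argument by citing~\eqref{eq:triangle} directly.
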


\begin{proof}
  $\tilde{w}(J\circ I) = \tilde{d}_H^{J\circ I}(P,R) = \max(d_H^{J\circ I}(P,R), d_H^{J\circ I}(R,P)) \leq \max( d_H^I(P,Q) + d_H^J(Q,R), d_H^J(R,Q) + d_H^I(Q,P)) \leq \max (d_H^I(P,Q), d_H^I(Q,P)) + \max (d_H^J(Q,R),d_H^J(R,Q) = \tilde{d}_H^I(P,Q) + \tilde{d}_H^J(Q,R) = \tilde{w}_{GH}^I) + \tilde{w}_{GH}^J.$
\end{proof}

Again, this weighted category has an associated Lawvere metric space (see Section~\ref{sec:wcat-to-lawvere}). Call the corresponding Lawvere metric the \emph{symmetric Gromov-Hausdorff distance}. In detail:

\begin{defn}
Let $P,Q$ be weighted categories. The \emph{symmetric Gromov-Hausdorff distance} between $P$ and $Q$ is given by
\begin{equation*}
  \tilde{d}_{GH}(P,Q) = \inf_{I \in \cat{wEmb}(P,Q)} \tilde{d}_H^I(P,Q).
\end{equation*}
\end{defn}

\begin{rmk}
  $d_{GH}(P,Q) \leq \tilde{d}_{GH}(P,Q)$.
\end{rmk}

\begin{thm}
  The class of weighted categories with the symmetric Gromov-Hausdorff distance is a Lawvere metric space.
\end{thm}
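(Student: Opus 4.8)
The plan is to mimic the proof of Theorem~\ref{thm:gh-lawvere}, passing through the weighted-category-structure result of Proposition~\ref{prop:sgh-is-wcat} and the functor $\cat{wCat} \to \cat{Lawv}$ from Section~\ref{sec:wcat-to-lawvere}. Since Proposition~\ref{prop:sgh-is-wcat} already establishes that $\tilde{w}_{GH}$ makes the $1$-skeleton of $\cat{wEmb}$ into a weighted (large, locally small) category, the induced-metric construction of Section~\ref{sec:wcat-to-lawvere} immediately yields a Lawvere metric space whose objects are the weighted categories and whose distance is $\inf_{I \in \cat{wEmb}(P,Q)} \tilde{w}_{GH}(P \hookrightarrow I \hookleftarrow Q) = \tilde{d}_{GH}(P,Q)$. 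So the bulk of the work is already done upstream, and the proof itself is essentially a one-line appeal.

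Concretely, I would proceed in the following steps. First, recall from Proposition~\ref{prop:sgh-is-wcat} that $(\text{$1$-skeleton of }\cat{wEmb}, \tilde{w}_{GH})$ is a weighted category: identity cospans have weight $0$ because $\tilde{d}_H^P(P,P) = 0$, and subadditivity along horizontal composition $J \circ I$ is exactly the displayed chain of inequalities in that proof. Second, invoke the construction of Section~\ref{sec:wcat-to-lawvere}: any weighted category $(\cat{C}, w)$ gives a Lawvere metric space $(C, d)$ with $d(x,y) = \inf_{f : x \to y} w(f)$, satisfying $d(x,x) = 0$ and the triangle inequality. Applying this to the weighted category of Proposition~\ref{prop:sgh-is-wcat}, with $0$-cells $P, Q$ the weighted categories and $1$-cells the weighted pairwise embeddings, the induced distance between $P$ and $Q$ is precisely $\tilde{d}_{GH}(P,Q)$ by Definition~\ref{def:gh-distance} (symmetric version). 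Third, observe that the resulting Lawvere metric is moreover \emph{symmetric}: for each cospan $P \hookrightarrow I \hookleftarrow Q$ one has the reversed cospan $Q \hookrightarrow I \hookleftarrow P$ with the same symmetric Hausdorff weight $\tilde{d}_H^I(P,Q) = \tilde{d}_H^I(Q,P)$ (immediate from $\tilde{d}_H(A,B) = \tilde{d}_H(B,A)$ in Definition of symmetric Hausdorff distance), so taking infima gives $\tilde{d}_{GH}(P,Q) = \tilde{d}_{GH}(Q,P)$. Hence $\tilde{d}_{GH}$ defines a symmetric Lawvere metric space structure.

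The one genuine subtlety — and the only point I would flag as requiring a sentence of care — is the size issue: $\cat{wEmb}$ is a \emph{large} category, so the infimum over $I \in \cat{wEmb}(P,Q)$ is, a priori, an infimum over a proper class rather than a set. This is handled exactly as in the remark following Definition~\ref{def:gh-distance}: one may restrict to those weighted categories $I$ whose underlying category is $P \amalg Q$ (with a possibly different weighting), which form a set, so the infimum is legitimate and nothing changes. With that observation in place, the statement follows formally, and there is no real obstacle. If one preferred a self-contained argument avoiding the functor $\cat{wCat} \to \cat{Lawv}$, one could instead verify the two axioms directly: $\tilde{d}_{GH}(P,P) = 0$ via the identity cospan $P = P = P$, and the triangle inequality $\tilde{d}_{GH}(P,R) \leq \tilde{d}_{GH}(P,Q) + \tilde{d}_{GH}(Q,R)$ by choosing near-optimal cospans $I \in \cat{wEmb}(P,Q)$ and $J \in \cat{wEmb}(Q,R)$, forming $J \circ I \in \cat{wEmb}(P,R)$, and applying the subadditivity inequality of Proposition~\ref{prop:sgh-is-wcat} together with $\tilde{w}_{GH}(J \circ I) \leq \tilde{w}_{GH}(I) + \tilde{w}_{GH}(J)$; symmetry then follows as above. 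Either route is short; the main conceptual content has already been extracted into Propositions~\ref{prop:sgh-is-wcat} and the $\cat{wCat} \to \cat{Lawv}$ functor.
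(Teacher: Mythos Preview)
Your proposal is correct and matches the paper's approach exactly: the paper leaves this theorem without an explicit proof, treating it as an immediate consequence of Proposition~\ref{prop:sgh-is-wcat} together with the induced-metric construction of Section~\ref{sec:wcat-to-lawvere} (as signaled by the sentence ``Again, this weighted category has an associated Lawvere metric space''). Your write-up spells out precisely that argument, and your additional remarks on the size issue and on symmetry are correct refinements that the paper simply elides.
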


\subsection{Interleaving distance of functors from weighted categories}
\label{sec:interleaving-distance-functors}

In this section $\cat{C}, \cat{D}$ are categories and $P,Q$ are weighted categories. Functors $F:P \to \cat{C}$ and $G:Q \to \cat{C}$ are functors from the underlying categories of $P$ and $Q$.

\begin{defn} \label{def:interleaving-distance}
The \emph{interleaving distance} between $F$ and $G$ is given by
\begin{equation*}
  d(F,G) = \inf \{ d_H^I(P,Q) \ | \ I \in \cat{wEmb}(P,Q) \text{ and $F$ and $G$ are $I$-interleaved} \}.
\end{equation*} 
\end{defn}

To compute the interleaving distance, we can restrict to weighted categories $I$ with objects $P_{0} \amalg Q_{0}$.
Indeed, let $i: P \hookrightarrow I$ and $j : Q \hookrightarrow I$ be embeddings.
Let $\hat{I}$ be the category with objects $P_{0} \amalg Q_{0}$ and for all $p \in P$ and $q \in Q$, $\hat{I}(p,q) = I(i(p),j(q))$ as a weighted set.
(We assume that $P$ and $Q$ embed in $\hat{I}$, of course.) 
In particular, if $i(p) = j(q)$, then there is a weight-zero isomorphism from $p$ to $q$.
Obviously, there is a weight-preserving functor $\theta : \hat{I} \rightarrow I$, and $i$ and $j$ each factor through $\theta$. 
Therefore if $F$ and $G$ are $I$-interleaved, they are also $\hat{I}$-interleaved, and $d^{\hat{I}}_{H}(P,Q) = d^{I}_{H}(P,Q)$.

\begin{thm} \label{thm:interleaving-lawvere}
  The class of functors from weighted categories to a fixed category $\cat{C}$ together with the interleaving distance is a Lawvere metric space.
\end{thm}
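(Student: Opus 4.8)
The plan is to establish the two axioms of a Lawvere metric space for $d(F,G)$: that $d(F,F)=0$ and that the triangle inequality holds. The first is immediate by taking $I$ to be the trivial cospan $P = P = P$ (the unitor image in $\cat{Emb}$), for which $H = F$ visibly witnesses that $F$ is $P$-interleaved with itself and $d_H^P(P,P)=0$. So the content is entirely in the triangle inequality, which is where I expect the only real work.

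For the triangle inequality, suppose $F:P\to\cat{C}$, $G:Q\to\cat{C}$, $H:R\to\cat{C}$ are functors with common codomain, and fix $\eps>0$. Choose a weighted pairwise embedding $P\hookrightarrow I\hookleftarrow Q$ with $F$ and $G$ being $I$-interleaved and $d_H^I(P,Q)\le d(F,G)+\eps$, and similarly choose $Q\hookrightarrow J\hookleftarrow R$ with $G$ and $H$ being $J$-interleaved and $d_H^J(Q,R)\le d(G,H)+\eps$. Now form the horizontal composite $P\hookrightarrow J\circ I\hookleftarrow R$ in the Gromov-Hausdorff bicategory $\cat{wEmb}$: by Proposition~\ref{prop:IoJ-in-wCat} this is again a weighted pairwise embedding, and the two interleaving functors $I\to\cat{C}$ and $J\to\cat{C}$ agree with $G$ on the common copy of $Q$, hence by the pushout (universal) property of $J\circ I$ in $\cat{wCat}$ — equivalently, directly by the Proposition preceding Theorem~\ref{thm:I-stability} — they glue to a single functor $J\circ I\to\cat{C}$ witnessing that $F$ and $H$ are $(J\circ I)$-interleaved. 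Finally, the subadditivity estimate established in the proof of Proposition~\ref{prop:gh-is-wcat}, namely $d_H^{J\circ I}(P,R)\le d_H^I(P,Q)+d_H^J(Q,R)$, gives
\[
	d(F,H)\le d_H^{J\circ I}(P,R)\le d_H^I(P,Q)+d_H^J(Q,R)\le d(F,G)+d(G,H)+2\eps.
\]
Letting $\eps\to 0$ yields $d(F,H)\le d(F,G)+d(G,H)$.

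The main obstacle is the gluing step: one must know that an $I$-interleaving of $F$ with $G$ and a $J$-interleaving of $G$ with $H$ combine to a $(J\circ I)$-interleaving of $F$ with $H$. This is precisely the content of the Proposition stated just before Theorem~\ref{thm:I-stability}, which follows from the pushout property of $J\circ I$ in $\cat{Cat}$ (the two interleaving functors restrict to the same functor $G$ on $Q$, so they induce a unique functor out of the pushout), together with Proposition~\ref{prop:IoJ-in-wCat} ensuring $J\circ I$ is a legitimate object of $\cat{wEmb}(P,R)$. Everything else — the vanishing on the diagonal, the Hausdorff subadditivity — has already been done in earlier results and is simply quoted. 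One small bookkeeping point worth noting (but not belaboring) is that $d(F,G)$ is defined as an infimum over a possibly-empty-looking family, but it is never empty since $F$ and $G$ are always $\cat{I}$-interleaved for $\cat{I}$ large enough (e.g. the pushout $P\amalg Q$ with suitable weights, or more cheaply any cospan admitting an extension), and in any case the triangle inequality argument only uses the hypothesis that the relevant infima are approached by genuine interleavings, with the convention that if no interleaving exists the distance is $+\infty$ and the inequality holds trivially.
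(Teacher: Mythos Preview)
Your proof is correct and follows essentially the same route as the paper: the vanishing on the diagonal via the trivial cospan $P=P=P$, and the triangle inequality via the gluing Proposition (just before Theorem~\ref{thm:I-stability}) combined with the Hausdorff subadditivity inequality~\eqref{eq:triangle} from Proposition~\ref{prop:gh-is-wcat}. The paper's version is terser---it omits the explicit $\eps$-approximation and the edge case $d=\infty$---but the content is identical.
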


\begin{proof}
  Let $F:P \to \cat{C}$. Then $d(F,F)=0$ since $F$ and $F$ are $P$-interleaved.
  Let $G:Q \to \cat{C}$ and $H:R \to \cat{C}$.
  If $F$ and $G$ are $I$-interleaved and $G$ and $H$ are $J$-interleaved then $F$ and $H$ are $J\circ I$-interleaved. So together with \eqref{eq:triangle}, we have that $d(F,H) \leq d(F,G)+d(G,H)$.
\end{proof}

\begin{thm}
  Let $F:P \to \cat{C}$ and $G:Q \to \cat{C}$. Then
  \begin{equation*}
    d_{GH}(P,Q) \leq d(F,G).
  \end{equation*}
\end{thm}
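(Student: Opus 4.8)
The plan is to read off the inequality directly from the two infima in Definitions~\ref{def:gh-distance} and~\ref{def:interleaving-distance}. By definition, $d_{GH}(P,Q)$ is the infimum of $d_H^I(P,Q)$ taken over \emph{all} objects $I \in \cat{wEmb}(P,Q)$, while $d(F,G)$ is the infimum of the very same quantity $d_H^I(P,Q)$, but taken only over those $I \in \cat{wEmb}(P,Q)$ for which $F$ and $G$ are $I$-interleaved. So the first step is simply to observe that the index set appearing in the definition of $d(F,G)$ is a subset of the index set appearing in the definition of $d_{GH}(P,Q)$; since the infimum of a function over a subset is at least as large as its infimum over the whole set, we get $d_{GH}(P,Q) \le d(F,G)$ immediately.

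The only point worth spelling out is that this subset is nonempty, so that the infimum defining $d(F,G)$ is not the vacuous $\inf\emptyset = \infty$ (in which case the inequality would still hold, trivially, but it is cleaner to note that the case does not arise). For this I would take $I = P \amalg Q$ with every morphism between the two copies assigned weight $\infty$: the two inclusions are weight-preserving embeddings, so $P \amalg Q \in \cat{wEmb}(P,Q)$, and since $P \amalg Q$ is a coproduct in $\cat{Cat}$ the functors $F$ and $G$ jointly induce a functor $H : P \amalg Q \to \cat{C}$ making the interleaving diagram commute. Hence $F$ and $G$ are always $(P \amalg Q)$-interleaved.

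I do not expect any genuine obstacle here: the statement is a bookkeeping observation about nested infima, and the argument requires only that (i) any weighted pairwise embedding witnessing an interleaving of $F$ and $G$ is in particular an object of $\cat{wEmb}(P,Q)$, and (ii) the feasible set for $d(F,G)$ is nonempty. One could, if desired, phrase the whole proof in a single sentence: the feasible set for $d(F,G)$ is a nonempty subset of that for $d_{GH}(P,Q)$, over which the same weight function $d_H^I(P,Q)$ is being minimized.
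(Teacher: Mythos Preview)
Your proposal is correct and matches the paper's own proof, which reads in full: ``This follows directly from the definitions.'' You have simply made explicit the comparison of the two infima (and added a harmless nonemptiness remark---note that in the coproduct $P \amalg Q$ there are in fact no cross morphisms at all, so the ``weight $\infty$'' clause is vacuous).
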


\begin{proof}
  This follows directly from the definitions.
\end{proof}

\begin{thm}[Stability] \label{thm:interleaving-stability}
  Let $F:P \to \cat{C}$, $G:Q \to \cat{C}$ and $H:\cat{C} \to \cat{D}$. 
  Then
  \begin{equation*}
    d(HF,HG) \leq d(F,G).
  \end{equation*}
\end{thm}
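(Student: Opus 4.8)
The plan is to derive this as an immediate consequence of the stability result for $I$-interleaved functors already established in Theorem~\ref{thm:I-stability}, together with the definition of interleaving distance in Definition~\ref{def:interleaving-distance}. First I would fix an arbitrary weighted pairwise embedding $P \hookrightarrow I \hookleftarrow Q$ in $\cat{wEmb}(P,Q)$ with respect to which $F$ and $G$ are $I$-interleaved, so that there is a functor $\Phi : I \to \cat{C}$ restricting to $F$ and $G$ along the two embeddings. Applying Theorem~\ref{thm:I-stability} to the functor $H : \cat{C} \to \cat{D}$, the functors $HF$ and $HG$ are then also $I$-interleaved, via the functor $H\Phi : I \to \cat{D}$; note that the embedding pair $P \hookrightarrow I \hookleftarrow Q$ and hence the quantity $d_H^I(P,Q)$ is unchanged, since the codomain category plays no role in it.

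Next I would observe that this shows every $I$ that is admissible in the infimum defining $d(F,G)$ is also admissible in the infimum defining $d(HF,HG)$. In symbols,
\begin{equation*}
  \{ d_H^I(P,Q) \mid I \in \cat{wEmb}(P,Q), \ F,G \text{ are } I\text{-interleaved} \}
  \subseteq
  \{ d_H^I(P,Q) \mid I \in \cat{wEmb}(P,Q), \ HF,HG \text{ are } I\text{-interleaved} \}.
\end{equation*}
Taking infima of both sides, and using that the infimum of a subset is at least the infimum of the superset, yields $d(HF,HG) \leq d(F,G)$, which is exactly the claim.

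There is essentially no obstacle here: the content is entirely contained in Theorem~\ref{thm:I-stability}, and the remaining argument is the trivial monotonicity of infima under enlarging the index set. The only point requiring a word of care is the remark that $d_H^I(P,Q)$ depends only on the pairwise embedding $P \hookrightarrow I \hookleftarrow Q$ of weighted categories and not on any functor to a codomain — so that passing from the interleaving witnessed by $\Phi$ to the one witnessed by $H\Phi$ keeps the associated weight fixed — but this is immediate from Definition~\ref{def:weight-embedding}. Thus the proof is just: given an $I$-interleaving of $F$ and $G$, post-compose with $H$ to get an $I$-interleaving of $HF$ and $HG$ with the same Hausdorff weight, then take the infimum.
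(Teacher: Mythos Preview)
Your proof is correct and is essentially the same as the paper's, which consists of the single line ``This follows from Theorem~\ref{thm:I-stability}.'' You have simply unpacked that reference by spelling out the monotonicity-of-infima argument.
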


\begin{proof}
  This follows from Theorem~\ref{thm:I-stability}.
\end{proof}

We also have symmetric versions of these results.

\begin{defn}
The \emph{symmetric interleaving distance} between $F$ and $G$ is given by
\begin{equation*}
  \tilde{d}(F,G) = \inf \{ \tilde{d}_H^I(P,Q) \ | \ I \in \cat{wEmb}(P,Q) \text{ and $F$ and $G$ are $I$-interleaved} \}.
\end{equation*} 
\end{defn}

\begin{rmk}
  $d(F,G) \leq \tilde{d}(F,G)$.
\end{rmk}

\begin{thm}
  The class of functors from weighted categories to a fixed category $\cat{C}$ together with the symmetric  interleaving distance is a Lawvere metric space.
\end{thm}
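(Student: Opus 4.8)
The plan is to rerun the proof of Theorem~\ref{thm:interleaving-lawvere}, tracking the symmetric Hausdorff weights $\tilde{d}_H^I$ in place of $d_H^I$ and importing the subadditivity computation already carried out in the proof of Proposition~\ref{prop:sgh-is-wcat}. For reflexivity: given $F : P \to \cat{C}$, the identity cospan $P = P = P$ (the unitor of $\cat{wEmb}$) witnesses that $F$ is $P$-interleaved with itself, and $\tilde{d}_H^P(P,P) = 0$, so $\tilde{d}(F,F) = 0$.

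For the triangle inequality, let $F : P \to \cat{C}$, $G : Q \to \cat{C}$, $H : R \to \cat{C}$, and suppose $F,G$ are $I$-interleaved and $G,H$ are $J$-interleaved for weighted embedding pairs $P \hookrightarrow I \hookleftarrow Q$ and $Q \hookrightarrow J \hookleftarrow R$. By the Proposition in Section~\ref{sec:I-interleaving}, $F$ and $H$ are $(J \circ I)$-interleaved, and $J \circ I \in \cat{wEmb}(P,R)$ by Proposition~\ref{prop:IoJ-in-wCat}. The estimate from the proof of Proposition~\ref{prop:sgh-is-wcat} gives
\[
  \tilde{d}_H^{J \circ I}(P,R) \leq \tilde{d}_H^I(P,Q) + \tilde{d}_H^J(Q,R),
\]
and taking the infimum over all such $I$ and $J$ yields $\tilde{d}(F,H) \leq \tilde{d}(F,G) + \tilde{d}(G,H)$. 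One may additionally note symmetry: swapping the two embeddings turns a witnessing cospan $P \hookrightarrow I \hookleftarrow Q$ for $\tilde{d}(F,G)$ into a witnessing cospan $Q \hookrightarrow I \hookleftarrow P$ for $\tilde{d}(G,F)$, and $\tilde{d}_H^I$ is symmetric in its two arguments by definition, so $\tilde{d}(F,G) = \tilde{d}(G,F)$.

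The argument is purely an assembly of prior results, so there is no genuine obstacle. The only point needing a word of care is that the subadditivity of $\tilde{d}_H$ in Proposition~\ref{prop:sgh-is-wcat} was established for arbitrary cospans in $\cat{wEmb}$, with no reference to the functors $F,G,H$; one simply observes that the inequality continues to hold after restricting attention to the subclass of cospans that admit a compatible extension to $\cat{C}$, since that inequality involves only the weighted categories $P,Q,R,I,J$ and not the extensions.
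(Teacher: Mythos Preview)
Your proof is correct and follows exactly the approach implicit in the paper: the paper states this theorem without a separate proof, relying on the reader to rerun the argument of Theorem~\ref{thm:interleaving-lawvere} with $\tilde{d}_H^I$ in place of $d_H^I$ and invoke the subadditivity from Proposition~\ref{prop:sgh-is-wcat} in place of~\eqref{eq:triangle}. Your additional observation about symmetry is correct but, as you note, not required for the Lawvere metric space conclusion.
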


\begin{thm}
  Let $F:P \to \cat{C}$ and $G:Q \to \cat{C}$. Then
  \begin{equation*}
     \tilde{d}_{GH}(P,Q) \leq \tilde{d}(F,G).
  \end{equation*}
\end{thm}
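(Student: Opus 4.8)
The plan is to prove this exactly as the unsymmetrized analogue (the theorem asserting $d_{GH}(P,Q) \leq d(F,G)$) was proved, namely by comparing the two defining infima and observing that one ranges over a subset of the index set of the other. Both $\tilde{d}_{GH}(P,Q)$ and $\tilde{d}(F,G)$ are infima of the single function
\[
  I \longmapsto \tilde{d}_H^I(P,Q), \qquad I \in \cat{wEmb}(P,Q),
\]
so the only point at issue is which weighted pairwise embeddings $I$ are permitted to contribute.

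First I would unwind the two definitions. By the definition of the symmetric Gromov-Hausdorff distance, $\tilde{d}_{GH}(P,Q)$ is the infimum of $\tilde{d}_H^I(P,Q)$ taken over \emph{all} of $\cat{wEmb}(P,Q)$. By the definition of the symmetric interleaving distance, $\tilde{d}(F,G)$ is the infimum of the \emph{same} quantity $\tilde{d}_H^I(P,Q)$, but taken only over those $I \in \cat{wEmb}(P,Q)$ for which $F$ and $G$ are $I$-interleaved.

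Next I would invoke the elementary fact that $S \subseteq T$ implies $\inf_{T} \leq \inf_{S}$ for any real-valued function. Setting $T = \cat{wEmb}(P,Q)$ and
\[
  S = \{\, I \in \cat{wEmb}(P,Q) \mid F \text{ and } G \text{ are } I\text{-interleaved}\,\} \subseteq T,
\]
the two displayed definitions give $\tilde{d}_{GH}(P,Q) = \inf_T \tilde{d}_H^I(P,Q) \leq \inf_S \tilde{d}_H^I(P,Q) = \tilde{d}(F,G)$, which is the claim.

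I do not expect any genuine obstacle: this is the verbatim symmetric counterpart of the earlier inequality $d_{GH} \leq d$, with $\tilde{d}_H^I$ and $\tilde{d}$ in place of $d_H^I$ and $d$, and so it indeed ``follows directly from the definitions.'' The only edge case worth a remark is when $S$ is empty, i.e.\ when $F$ and $G$ admit no interleaving at all; then $\tilde{d}(F,G) = +\infty$ under the convention $\inf\emptyset = +\infty$, and the inequality holds trivially. This requires no further work.
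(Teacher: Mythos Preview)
Your proposal is correct and matches the paper's approach: the paper states this theorem without proof, and for the unsymmetrized analogue simply writes ``This follows directly from the definitions,'' which is precisely the infimum-over-a-subset argument you have spelled out.
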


\begin{thm}[Stability] 
  Let $F:P \to \cat{C}$, $G:Q \to \cat{C}$ and $H:\cat{C} \to \cat{D}$. 
  Then
  \begin{equation*}
    \tilde{d}(HF,HG) \leq \tilde{d}(F,G).
  \end{equation*}
\end{thm}

\section{Future Equivalences}
\label{sec:fut}

In~\cite{bdss:1}, we made use of interleaved preordered sets.  
The interleavings were equipped with a structure that allowed us to quantify them.
It turns out that when one generalizes these ideas from posets to small categories, one recovers Grandis' notion of \emph{future equivalence}~\cite{grandis:2005}, which we now recall.
We will also show that these assemble to give a bicategory, and can be used to define interleavings of functors.

\subsection{The category of future equivalences}
\label{sec:futpq}

Let $P$ and $Q$ be small categories.  
In this section we describe a category, $\cat{Fut}(P,Q)$, consisting of future equivalences between $P$ and $Q$, with appropriate morphisms.
For further details, see~\cite[Section 2]{grandis:2005}.

An object of $\cat{Fut}(P,Q)$ consists of a quadruple, $(\Gamma,\Kappa,\eta,\nu)$, where
\begin{itemize}
	\item $\Gamma : P \rightarrow Q$ and $\Kappa : Q \rightarrow P$ are functors,
	\item $\eta : \Iota_{P} \Rightarrow \Kappa\Gamma$ and $\nu : \Iota_{Q} \Rightarrow \Gamma\Kappa$ are natural transformations, and
	\item we have the coherence conditions, 
	\[
		\Gamma \eta = \nu \Gamma : \Gamma \Rightarrow \Gamma \Kappa \Gamma 
		\quad \text{and} \quad
		\Kappa \nu = \eta \Kappa: \Kappa \Rightarrow \Kappa \Gamma \Kappa.
	\]	
\end{itemize}
These data produce a sort of adjunction, in the sense that we obtain natural transformations of hom functors,
\[
	Q(\Gamma(p),q) \rightarrow P(p,\Kappa(q))
\]
and
\[
	P(\Kappa(q),p) \rightarrow Q(q,\Gamma(p))
\]
given by $f \mapsto K(f) \circ \eta_{p}$ and $g \mapsto \Gamma(g) \circ \nu_{q}$, respectively.

A morphism, $(\Gamma,\Kappa,\eta,\nu) \rightarrow (\Gamma',\Kappa',\eta',\nu')$ in $\cat{Fut}(P,Q)$ consists of an ordered pair of natural transformations, $(\alpha,\beta)$, where $\alpha : \Gamma \Rightarrow \Gamma'$ and $\beta : \Kappa \Rightarrow \Kappa'$, such that the diagrams
\begin{equation}\label{eq:morph-cond}
	\begin{tikzcd}
		& \Iota_{P} \arrow[dl, Rightarrow, "\eta"']
		\arrow[dr, Rightarrow, "\eta'"]  \\
		\Kappa\Gamma \arrow[rr, Rightarrow, "\beta*\alpha"'] & & \Kappa'\Gamma'
	\end{tikzcd}
	\quad\text{and} \quad
	\begin{tikzcd}
		& \Iota_{Q} \arrow[dl, Rightarrow, "\nu"'] \arrow[dr, Rightarrow, "\nu'"] \\
		\Gamma\Kappa \arrow[rr, Rightarrow, "\alpha*\beta"'] & & \Gamma'\Kappa'
	\end{tikzcd}
\end{equation}
commute.

Composition is component-wise, so that if 
\[
	(\alpha',\beta') : (\Gamma',\Kappa',\eta',\nu')
		\rightarrow (\Gamma'',\Kappa'',\eta'',\nu'')
\]
is another morphism, we set
\[
	(\alpha',\beta')(\alpha,\beta) = (\alpha'\circ\alpha,\beta'\circ\beta).
\]
By the interchange law of horizontal and vertical composition, this new ordered pair satisfies \eqref{eq:morph-cond}.  
It is then immediate that composition is associative, and that the identity morphism on $(\Gamma,\Kappa,\eta,\nu)$ is $(\Iota_{\Gamma}, \Iota_{\Kappa})$.

\subsection{The 2-category of future equivalences}\label{sec:fut-2-cat}

In this section we consider a bicategory that we will call $\cat{Fut}$.  
The $0$-cells of $\cat{Fut}$ are all small categories.
Then $1$-cells and $2$-cells are defined by letting $\cat{Fut}(P,Q)$ be the category described above.
Horizontal composition
\[
	\cat{Fut}(Q,R) \times \cat{Fut}(P,Q) \rightarrow \cat{Fut}(P,R)
\]
is defined by composing future equivalences, as in~\cite[Section 2.1]{grandis:2005}.
Explicitly, if $(\Gamma,\Kappa,\eta,\nu)$ and $(\Lambda, \Mu, \sigma, \tau)$ are future equivalences between $P$ and $Q$ and $Q$ and $R$, respectively, then
\[
	 (\Lambda, \Mu, \sigma, \tau) * (\Gamma, \Kappa, \eta, \nu) = (\Lambda\Gamma, \Kappa\Mu, (\Kappa \sigma \Gamma) \eta, (\Lambda \nu \Mu) \tau).
\]

The identity morphism at a small category $P$ is the quadruple, $(\Iota_{P}, \Iota_{P}, \iota_{\Iota_{P}}, \iota_{\Iota_{P}})$, where $\iota_{\Iota_{P}}$ is the identity natural transformation on the identity functor $\Iota_{P}$.

The unitor morphism $\cat{Fut}(P,Q) \rightarrow \cat{Fut}(P,Q)$ given by $(\Gamma,\Kappa,\eta,\nu) \mapsto (\Gamma,\Kappa,\eta,\nu) \circ (\Iota_{P},\Iota_{P},\iota_{\Iota_{P}},\iota_{\Iota_{P}})$, is clearly the identity map, as the components of the horizontal composition are, respectively,
\begin{itemize}
	\item $\Gamma \Iota_{P} = \Gamma$,
	\item $\Kappa \Iota_{P} = \Kappa$,
	\item $(\Iota_{P} \eta \Iota_{P}) \iota_{\Iota_{P}} = \eta$, and
	\item $(\Iota_{P} \nu \Iota_{P}) \iota_{\Iota_{P}} = \nu$.
\end{itemize}
For the same reason, the unitor on the post-composition side is also the identity map.

The associator morphisms are also identities, essentially because composition of functors and natural transformations is strictly associative.

In summary,

\begin{prop}
$\cat{Fut}$ is a strict 2-category.
\end{prop}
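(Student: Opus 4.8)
The plan is to verify directly that $\cat{Fut}$ satisfies the axioms of a strict 2-category, i.e.\ that it is a category enriched in $\cat{Cat}$. Most of the structural work has already been carried out in Sections~\ref{sec:futpq} and~\ref{sec:fut-2-cat}: we have the hom-categories $\cat{Fut}(P,Q)$ (with objects the quadruples $(\Gamma,\Kappa,\eta,\nu)$ and morphisms the pairs $(\alpha,\beta)$ satisfying \eqref{eq:morph-cond}), and we have exhibited the horizontal composition functors and the identity $1$-cells. What remains for a \emph{strict} $2$-category is: (i) horizontal composition $\cat{Fut}(Q,R) \times \cat{Fut}(P,Q) \to \cat{Fut}(P,R)$ is a well-defined functor; (ii) the associativity and unit coherences hold \emph{on the nose}. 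The text has already observed that the unitors and associators are literally identity maps, so the bulk of the remaining argument is part (i), split into its action on $0$-cells (objects) and on $1$-cells (morphisms) of the product hom-category, together with functoriality.

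First I would check that horizontal composition is well-defined on objects: given future equivalences $(\Gamma,\Kappa,\eta,\nu)$ between $P,Q$ and $(\Lambda,\Mu,\sigma,\tau)$ between $Q,R$, I must confirm that the quadruple $(\Lambda\Gamma,\Kappa\Mu,(\Kappa\sigma\Gamma)\eta,(\Lambda\nu\Mu)\tau)$ again satisfies the two coherence conditions $\Lambda\Gamma \cdot ((\Kappa\sigma\Gamma)\eta) = ((\Lambda\nu\Mu)\tau)\cdot \Lambda\Gamma$ and the dual. This is the substantive computation; it is done in \cite[Section 2.1]{grandis:2005}, and amounts to a diagram chase using the coherences $\Gamma\eta = \nu\Gamma$, $\Kappa\nu = \eta\Kappa$, $\Lambda\sigma = \tau\Lambda$, $\Mu\tau = \sigma\Mu$ together with naturality and the interchange law. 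Next I would check well-definedness on morphisms: given $2$-cells $(\alpha,\beta)$ and $(\alpha_1,\beta_1)$ in the respective hom-categories, I must show the horizontally composed pair $(\alpha_1 * \alpha,\, \beta * \beta_1)$ (suitably whiskered) still satisfies \eqref{eq:morph-cond} for the composite future equivalences — again an application of the interchange law and naturality. Functoriality of horizontal composition (preservation of identities and of vertical composition) then follows, as in the paper's remark on component-wise composition, directly from the interchange law of horizontal and vertical composition of natural transformations.

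Finally I would record that the coherence isomorphisms are identities. For associativity: given a third future equivalence $(\Phi,\Psi,\rho,\xi)$ between $R,S$, both bracketings of the triple horizontal composite have underlying functors $\Phi\Lambda\Gamma$ and $\Kappa\Mu\Psi$ (since composition of functors is strictly associative) and the two natural transformations agree because pasting of natural transformations is strictly associative and the interchange law lets the whiskered copies of $\sigma,\rho,\eta,\xi$ be reassociated freely. For the units, the computation is exactly the one already displayed in Section~\ref{sec:fut-2-cat}: $\Gamma\Iota_P = \Gamma$, $\Kappa\Iota_P = \Kappa$, $(\Iota_P\eta\Iota_P)\iota_{\Iota_P} = \eta$, $(\Iota_P\nu\Iota_P)\iota_{\Iota_P} = \nu$, and symmetrically on the other side. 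Hence all the structure maps of the bicategory are identities, so $\cat{Fut}$ is a strict $2$-category.

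The main obstacle is the verification in part (i) that the horizontal composite of two future equivalences again satisfies the coherence conditions — this is the one genuinely nontrivial computation, and I would either carry it out via a careful pasting-diagram argument or, more economically, cite Grandis~\cite[Section~2.1]{grandis:2005} where the composition of future equivalences is shown to be well-defined; everything else is bookkeeping that reduces to strict associativity of functor composition and the interchange law.
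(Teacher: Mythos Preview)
Your proposal is correct and follows essentially the same approach as the paper: the paper's argument is the discussion preceding the proposition (it has no separate proof block), which verifies that the unitors are identities via the same four-line computation you cite, asserts the associators are identities because composition of functors and natural transformations is strictly associative, and defers well-definedness of horizontal composition to Grandis. Your version is simply a more thorough expansion of the same outline, making explicit the functoriality of horizontal composition on $2$-cells that the paper leaves implicit.
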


\subsection{Interleavings of functors}
\label{sec:fut-interleaving}

\begin{defn}
Let $(\Gamma,\Kappa,\eta,\nu) \in \cat{Fut}(P,Q)$.
We say that functors $F : P \rightarrow \cat{C}$ and $G : Q \rightarrow \cat{C}$ are $(\Gamma,\Kappa,\eta,\nu)$-\emph{interleaved} if there exist natural transformations
\[
	\varphi : F \Rightarrow G\Gamma \quad \text{and} \quad \psi : G \Rightarrow F\Kappa
\]
such that $\psi_{\Gamma}\varphi = F\eta$ and $\varphi_{\Kappa}\psi = G\nu$.
\end{defn}

\section{From future equivalences to embedding pairs}
\label{sec:fut-to-emb}

Our goal in this section is to compare embeddings and future equivalences.
Specifically, for any pair of small categories $P$ and $Q$, we construct a full and faithful functor $\cat{Fut}(P,Q)^{op} \rightarrow \cat{Emb}(P,Q)$.
These functors assemble to form an oplax functor of bicategories, $\cat{Fut} \rightarrow \cat{Emb}$.

\subsection{A functor of categories}\label{sec:Phi-functor}

In this section, we define a full and faithful contravariant functor, $\Phi : \cat{Fut}(P,Q) \rightarrow \cat{Emb}(P,Q)$.

\subsubsection{On Objects}
Given $(\Gamma,\Kappa,\eta, \nu) \in \cat{Fut}(P,Q)$, define $I_{\Gamma,\Kappa}$ to be the category with the same objects as $P \amalg Q$, such that the inclusion $P \amalg Q \rightarrow I_{\Gamma,\Kappa}$ is an embedding.
We set, for $p \in P$ and $q \in Q$,
\[
	I_{\Gamma,\Kappa}(p,q) \cong Q(\Gamma p , q)
	\quad \text{and} \quad
	I_{\Gamma,\Kappa}(q,p) \cong P(\Kappa q , p).
\]

\begin{notn}
	If $f : p \rightarrow q$ is an arrow in $I_{\Gamma,\Kappa}(p,q)$, we denote by $\bar{f} : \Gamma(p) \rightarrow q$ the corresponding arrow in $I_{\Gamma,\Kappa}(\Gamma(p),q)$, and similarly for arrows $g : q \rightarrow p$.
\end{notn}

We now define composition in $I_{\Gamma,\Kappa}$.
Since $P \amalg Q \hookrightarrow I_{\Gamma,\Kappa}$, we need only establish composition involving cross-overs from $Q$ to $P$, or vice-versa.

\begin{itemize}
	\item If $f :  p \rightarrow q$ and $g : q \rightarrow q'$, then $g \circ f$ is defined to be the unique arrow having for $\overline{g \circ f}$ the composite
	\[
		\Gamma p \xrightarrow{\bar{f}} q \xrightarrow{g} q'.
	\]
	\item If $f : p \rightarrow p'$ and $g :  p' \rightarrow q$, then $\overline{g \circ f}$ is defined to be the composite
	\[
		\Gamma p \xrightarrow{\Gamma f} \Gamma p' \xrightarrow{\bar{g}} q.
	\]
	\item If $f :  p \rightarrow q$ and $g :  q \rightarrow p'$, then ${g \circ f}$ is the composite
	\[
		p \xrightarrow{\eta_{p}} \Kappa \Gamma p \xrightarrow{\Kappa \bar{f}} \Kappa q \xrightarrow{\bar{g}} p'.
	\]
	\item The cases starting with an object in $Q$ are defined symmetrically to the above cases.
\end{itemize}

\begin{prop}\label{prop:comp-assoc}
Composition in $I_{\Gamma,\Kappa}$, as defined above, is associative and unital.
\end{prop}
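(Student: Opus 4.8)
The plan is to verify associativity and unitality by a systematic case analysis, using the defining property that $P \amalg Q \hookrightarrow I_{\Gamma,\Kappa}$ is an embedding (so composition of two arrows both in $P$ or both in $Q$ is inherited and need not be checked) and using the \emph{coherence conditions} $\Gamma\eta = \nu\Gamma$ and $\Kappa\nu = \eta\Kappa$ together with naturality of $\eta$ and $\nu$. The key bookkeeping device is the ``bar'' notation: an arrow $f : p \to q$ in $I_{\Gamma,\Kappa}$ is recorded by $\bar f : \Gamma p \to q$ in $Q$, and an arrow $g : q \to p$ by $\bar g : \Kappa q \to p$ in $P$. So to prove $(h \circ g) \circ f = h \circ (g \circ f)$ for a composable triple, I would reduce each side to its bar-form (or to an honest arrow of $P$ or $Q$ when the source and target lie in the same piece) and check the two are equal in $P$ or $Q$.

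First I would enumerate the cases by the pattern of which piece ($P$ or $Q$) each of the four objects in a composable triple $w \to x \to y \to z$ lies in. Up to the evident $P \leftrightarrow Q$ symmetry there are a handful of genuinely new patterns: $PPP\to$ (inherited), $PPQ$, $PQQ$, $QPP$ (wait, up to symmetry $PQQ$ covers this), $PQP$, $QPQ$, and the patterns with three or more crossings like $PQPQ$. Concretely, the substantive checks are: (i) a crossing followed by two same-side arrows, e.g. $f: p\to q$, $g: q\to q'$, $h: q'\to q''$ — here both bracketings give the arrow with bar-form $\Gamma p \xrightarrow{\bar f} q \xrightarrow{g} q' \xrightarrow{h} q''$, immediate from associativity in $Q$; (ii) two same-side arrows then a crossing, e.g. $f: p\to p'$, $g: p'\to p''$, $h: p''\to q$ — both give bar-form $\Gamma p \xrightarrow{\Gamma(gf)} \Gamma p'' \xrightarrow{\bar h} q$, using functoriality of $\Gamma$; (iii) the ``sandwich'' cases $p \to q \to p' \to \cdots$ or $p\to p'\to q\to p''$, where the definition inserts an $\eta$; (iv) the double-crossing case $p \to q \to p' \to q'$.

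The main obstacle is case (iv) and its relatives — the configurations where composing forces two insertions of unit transformations and the two bracketings insert them in different places. For $f: p\to q$, $g: q\to p'$, $h: p'\to q'$, one bracketing computes $g\circ f$ via $p \xrightarrow{\eta_p} \Kappa\Gamma p \xrightarrow{\Kappa\bar f} \Kappa q \xrightarrow{\bar g} p'$ and then applies $h$, landing with bar-form $\Gamma p \xrightarrow{\Gamma(\text{that})} \Gamma p' \xrightarrow{\bar h} q'$; the other bracketing first forms $h \circ g$ with bar-form $\Gamma q \xrightarrow{\ } q'$ built from $\eta_q$, then composes with $f$. Reconciling these requires precisely the coherence identity $\Gamma\eta = \nu\Gamma$ (to move a $\Gamma$ past an $\eta$ into a $\nu$) or $\Kappa\nu = \eta\Kappa$, plus naturality squares for $\eta$ and $\nu$ applied to the arrows $\bar f, \bar g$; this is the point where all the axioms of a future equivalence get used. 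Unitality is by comparison easy: $1_p$ has bar-form $\Gamma p \xrightarrow{1} \Gamma p$ once one checks that post-/pre-composing a crossing arrow with an identity reproduces it, which follows from $\eta$ and $\nu$ being natural transformations \emph{from the identity functor} together with functoriality of $\Gamma$ and $\Kappa$. I would organize the writeup as: (1) state the reduction to crossing cases; (2) dispatch cases (i),(ii) and unitality in a sentence each; (3) treat the sandwich case (iii) carefully as the template computation; (4) treat the double-crossing case (iv) invoking the coherence conditions, noting the remaining cases are symmetric.
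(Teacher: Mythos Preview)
Your proposal is correct and matches the paper's approach almost exactly: a case analysis on the pattern of crossings in a composable triple, with the easy cases reducing to associativity/functoriality in $P$ or $Q$, the intermediate cases handled by naturality of $\eta$ (or $\nu$), and the triple-crossing case $p\to q\to p'\to q'$ requiring the coherence identity $\Gamma\eta=\nu\Gamma$ together with naturality of $\nu$. Two small corrections to keep you honest when you write it out: in your case (iv) the ``other bracketing'' forms $h\circ g$ using $\nu_q$ (not $\eta_q$), since $g:q\to p'$ is a $Q$-to-$P$ crossover; and for unitality, $1_p$ is not a crossover arrow and has no bar-form---you just check directly that $\overline{f\circ 1_p}=\bar f\circ\Gamma(1_p)=\bar f$ and $\overline{1_q\circ f}=1_q\circ\bar f=\bar f$.
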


\begin{proof}
The verification that composition is associative is somewhat tedious, but we will go through it for completeness (and then remove the explicit arguments later and replace them with indications of any clever bits required).

The identity maps come from the fact that $P \amalg Q \rightarrow I$ is surjective on objects.
We quickly check that the identities are neutral with respect to composition with the crossover maps.
Let $f :  p \rightarrow q$.  
Then $\overline{1_{q} \circ f} = \bar{f}$ since the composition is occurring entirely in $Q$.
On the other hand, $\overline{f \circ 1_p} = \bar{f} \Gamma(1_{p}) = \bar{f} 1_{\Gamma(p)} = \bar{f}$, using the fact that functors send identity maps to identity maps.

Now we have some straightforward checking that composition is associative when at least one crossover map is involved.
We only check the composites with source in $P$; the other verifications are symmetric.
There can be one, two, or three crossings between $P$ and $Q$.

\begin{itemize}
	\item Suppose $f :  p \rightarrow q$, $g : q \rightarrow q'$, $h : q' \rightarrow q''$.  Then 
	\[
		\overline{h \circ (g \circ f)}
		= h \circ \overline{(g \circ f)}
		= h \circ (g \circ \bar{f})
		= (h \circ g) \circ \bar{f}
		= \overline{(h \circ g) \circ f}.
	\]
	
	\item Suppose $f : p \rightarrow p'$, $g :  p' \rightarrow q$, $h : q \rightarrow q'$.
	Then
	\[
		\overline{h \circ ( g \circ f )} 
		= h \circ \overline{( g \circ f)} 
		= h \circ ( \bar{g} \circ \Gamma f )
		= (h \circ \bar{g}) \circ \Gamma f
		= \overline{(h \circ g)} \circ \Gamma f
		= \overline{(h \circ g) \circ f}.
	\]
	
	\item Suppose $f : p \rightarrow p'$, $g : p' \rightarrow p''$, $h :  p'' \rightarrow q$.
	Then 
	\[
		\overline{h \circ ( g \circ f )} 
		= \bar{h} \circ \Gamma(g \circ f)  
		= \bar{h} \circ (\Gamma g \circ  \Gamma f) 
		= (\bar{h} \circ \Gamma g) \circ \Gamma f 
		= \overline{(h \circ g)} \circ \Gamma f
		= \overline{(h \circ g) \circ f}.
	\]
	
	\item Suppose $f : p \rightarrow p'$, $g :  p' \rightarrow q$, $h :  q \rightarrow p''$.
	Then 
	\begin{eqnarray*}
		h \circ ( g \circ f) 
			& = & \bar{h} \circ \Kappa(\overline{g \circ f})  \circ \eta_{p} \\
			& = & \bar{h} \circ \Kappa(\bar{g} \circ \Gamma f) \circ \eta_{p} \\
			& = & \bar{h} \circ \Kappa\bar{g} \circ \Kappa\Gamma(f) \circ \eta_{p} \\
			& = & (\bar{h} \circ \Kappa\bar{g} \circ \eta_{p'}) \circ f \\
			& = & (h \circ g) \circ f.
	\end{eqnarray*}
	Here the trick is to use the naturality of $\eta$.
	
	\item Suppose $f:  p \rightarrow q$, $g : q \rightarrow q'$, $h :  q' \rightarrow p'$.
	Then
	\begin{eqnarray*}
		h \circ ( g \circ f)
			& = & \bar{h} \circ \Kappa(\overline{g \circ f}) \circ \eta_{p} \\
			& = & \bar{h} \circ \Kappa g \circ \Kappa \bar{f} \circ \eta_{p} \\
			& = & \overline{(h \circ g)} \circ \Kappa(f) \circ \eta_{p} \\
			& = & (h \circ g) \circ f.
	\end{eqnarray*}
	Here we just use the fact that $\Kappa$ commutes with composition.
	
	\item Suppose $f :  p \rightarrow q$, $g :  q \rightarrow p'$, $h : p' \rightarrow p''$.
	Then
	\begin{eqnarray*}
		h \circ ( g \circ f)
			& = & h \circ \bar{g} \circ \Kappa\bar{f} \circ \eta_{p}  \\
			& = & \overline{(h \circ g)} \circ \Kappa\bar{f} \circ \eta_{p}	\\
			& = & (h \circ g) \circ f.
	\end{eqnarray*}
	Here we use the fact that all morphisms are composing in $P$, where composition is associative.
	
	\item Suppose $f :  p \rightarrow q$, $g :  q \rightarrow p'$, $h :  p' \rightarrow q'$.
	Then
	\begin{eqnarray*}
		\overline{h \circ ( g \circ f )}
			& = & \bar{h} \circ \Gamma(\overline{g \circ f}) \\
			& = & \bar{h} \circ \Gamma( \bar{g} \circ \Kappa\bar{f} \circ \eta_{p} ) \\
			& = & \bar{h} \circ \Gamma\bar{g} \circ \Gamma\Kappa\bar{f} \circ \Gamma\eta_{p}  \\
			& = & \bar{h} \circ \Gamma\bar{g} \circ \Gamma\Kappa\bar{f} \circ \nu_{\Gamma p} ) \\
			& = & \bar{h} \circ \Gamma\bar{g} \circ \nu_{q} \circ \bar{f}) \\
			& = & (h \circ g) \circ \bar{f} \\
			& = & \overline{(h \circ g) \circ f}.
	\end{eqnarray*}
	Here we finally use the coherence condition, $\nu \Gamma = \Gamma \eta$, along with naturality of $\nu$.
\end{itemize}
We can also put these in terms of diagrams, I suppose.
\end{proof}

\begin{rmk}
We could condense the proof by remarking that $I_{\Gamma,\Kappa}$ is the Grothendieck construction for a certain functor into $\cat{Cat}$.
\end{rmk}  

We can now compare our two notions of interleaving (Sections \ref{sec:I-interleaving} and \ref{sec:fut-interleaving}). 
Let $P, Q$ be weighted categories and let $F:P \to \cat{C}$ and $G:Q \to \cat{C}$.
Let $(\Gamma,\Kappa,\eta, \nu) \in \cat{Fut}(P,Q)$.


\begin{prop} \label{prop:fut-emb}
$F$ and $G$ are $(\Gamma,\Kappa)$-interleaved if and only if $F$ and $G$ are $I_{(\Gamma,\Kappa)}$-interleaved.
\end{prop}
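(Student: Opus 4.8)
The plan is to unwind both notions of interleaving into data about natural transformations and then match them up. Recall that an $I_{(\Gamma,\Kappa)}$-interleaving of $F$ and $G$ is a functor $H : I_{\Gamma,\Kappa} \to \cat{C}$ restricting to $F$ on $P$ and $G$ on $Q$. Since $P \amalg Q \hookrightarrow I_{\Gamma,\Kappa}$ is surjective on objects and the only new morphisms are the crossover morphisms $I_{\Gamma,\Kappa}(p,q) \cong Q(\Gamma p, q)$ and $I_{\Gamma,\Kappa}(q,p) \cong P(\Kappa q, p)$, specifying $H$ amounts to specifying, compatibly with $F$ and $G$, where $H$ sends crossover morphisms. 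First I would observe that it suffices to say where $H$ sends two distinguished crossover morphisms: the morphism $\tilde{\eta}_p : p \to \Gamma p$ corresponding to $1_{\Gamma p} \in Q(\Gamma p, \Gamma p)$, and the morphism $\tilde{\nu}_q : q \to \Kappa q$ corresponding to $1_{\Kappa q} \in P(\Kappa q, \Kappa q)$; every other crossover morphism factors through one of these followed by a morphism of $P$ or $Q$ (e.g. a general $f : p \to q$ with $\bar f : \Gamma p \to q$ equals $\bar f \circ 1_{\Gamma p}$, i.e. the morphism of $Q$ given by $\bar f$ postcomposed with $\tilde\eta_p$).

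Next I would set $\varphi_p := H(\tilde{\eta}_p) : F(p) = H(p) \to H(\Gamma p) = G(\Gamma p)$ and $\psi_q := H(\tilde{\nu}_q) : G(q) \to F(\Kappa q)$, and check that functoriality of $H$ forces exactly: (i) $\varphi : F \Rightarrow G\Gamma$ is natural — this comes from applying $H$ to the naturality square relating $\tilde\eta_p$, a morphism $f : p \to p'$ in $P$, and $\Gamma f$ in $Q$, which holds in $I_{\Gamma,\Kappa}$ by the second bullet of the composition definition; (ii) $\psi : G \Rightarrow F\Kappa$ natural, symmetrically; and (iii) the relations $\psi_{\Gamma}\varphi = F\eta$ and $\varphi_{\Kappa}\psi = G\nu$ — these come from applying $H$ to the composites $p \xrightarrow{\tilde\eta_p} \Gamma p \xrightarrow{\tilde\nu_{\Gamma p}} \Kappa\Gamma p$, which by the third bullet of the composition rule equals the morphism $\eta_p : p \to \Kappa\Gamma p$ of $P$ (here one traces through: $\tilde\nu_{\Gamma p}$ corresponds to $1_{\Kappa\Gamma p}$, so the composite $\tilde\nu_{\Gamma p} \circ \tilde\eta_p$ in $I_{\Gamma,\Kappa}$ has $\overline{\phantom{x}}$-form obtained via $p \xrightarrow{\eta_p} \Kappa\Gamma p \xrightarrow{\Kappa 1} \Kappa\Gamma p \xrightarrow{1} \Kappa\Gamma p = \eta_p$). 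That establishes the forward direction.

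For the converse, given natural transformations $\varphi : F \Rightarrow G\Gamma$, $\psi : G \Rightarrow F\Kappa$ with $\psi_\Gamma\varphi = F\eta$ and $\varphi_\Kappa\psi = G\nu$, I would define $H$ on objects by $H|_P = F$, $H|_Q = G$ (consistent since the embedding $P\amalg Q \to I_{\Gamma,\Kappa}$ is injective on objects), on morphisms of $P$ and $Q$ by $F$ and $G$, and on crossover morphisms by the formulas forced above — explicitly, for $f : p \to q$ with $\bar f : \Gamma p \to q$, set $H(f) = G(\bar f)\circ \varphi_p$, and for $g : q \to p$ with $\bar g : \Kappa q \to p$, set $H(g) = F(\bar g)\circ\psi_q$. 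The content is to verify $H$ respects composition in all the mixed cases of the composition definition in $I_{\Gamma,\Kappa}$; the one-crossover and two-crossover cases use only naturality of $\varphi$ and $\psi$ and functoriality of $F,G,\Gamma,\Kappa$, while the three-crossover case (a composite $p \to q \to p' \to q'$, say) is where the hypotheses $\psi_\Gamma\varphi = F\eta$ and $\varphi_\Kappa\psi = G\nu$ together with the coherence $\Gamma\eta = \nu\Gamma$ get used — mirroring precisely the last bullet of the proof of Proposition~\ref{prop:comp-assoc}. Since $H$ by construction restricts to $F$ and $G$, this is the desired $I_{(\Gamma,\Kappa)}$-interleaving.

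I expect the main obstacle to be the bookkeeping in the converse direction: checking well-definedness and functoriality of $H$ requires re-traversing all of the case analysis in Proposition~\ref{prop:comp-assoc}, now with each "$\circ$" in $I_{\Gamma,\Kappa}$ replaced by its image under $H$, and confirming that each defining equation of composition becomes a consequence of naturality plus the interleaving relations. A cleaner alternative, which I would mention, is to use the remark following Proposition~\ref{prop:comp-assoc}: $I_{\Gamma,\Kappa}$ is a Grothendieck construction, so functors out of it correspond to appropriately coherent families of functors and natural transformations — which is exactly an $(\Gamma,\Kappa,\eta,\nu)$-interleaving — making the equivalence essentially formal.
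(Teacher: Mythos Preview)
The paper states this proposition without proof, so there is nothing to compare against directly; your proposal is the only argument on the table, and it is essentially correct. The strategy---identify the ``unit'' crossover morphisms $\tilde\eta_p$ and $\tilde\nu_q$, observe that every crossover morphism factors as a unit followed by a morphism of $P$ or $Q$, and translate functoriality of $H$ into naturality of $\varphi,\psi$ plus the two triangle relations---is exactly right, and the Grothendieck-construction remark you close with is a clean way to package it.

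Two small bookkeeping corrections. First, for the converse direction you only need to verify $H(g\circ f)=H(g)\circ H(f)$ for \emph{binary} composites; there is no ``three-crossover case.'' The case analysis is shorter than Proposition~\ref{prop:comp-assoc}: up to symmetry the mixed cases are $(p\to q,\ q\to q')$, $(p\to p',\ p'\to q)$, and $(p\to q,\ q\to p')$. Second, your attribution of which hypotheses enter where is slightly off. The first two mixed cases use only functoriality of $G$ and naturality of $\varphi$, while the third (two-crossover) case already requires $\psi_\Gamma\varphi=F\eta$: writing $g\circ f=\bar g\circ\Kappa\bar f\circ\eta_p$ and comparing $H(g\circ f)=F(\bar g)\circ F\Kappa(\bar f)\circ F(\eta_p)$ with $H(g)\circ H(f)=F(\bar g)\circ\psi_q\circ G(\bar f)\circ\varphi_p$, naturality of $\psi$ gives $\psi_q\circ G(\bar f)=F\Kappa(\bar f)\circ\psi_{\Gamma p}$, and then $\psi_{\Gamma p}\circ\varphi_p=F(\eta_p)$ finishes it. The coherence $\Gamma\eta=\nu\Gamma$ is \emph{not} needed here---it was consumed in making $I_{\Gamma,\Kappa}$ a category (Proposition~\ref{prop:comp-assoc}), and once that holds, functoriality of $H$ follows from the interleaving data alone.
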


\subsubsection{On Morphisms}

Let $(\alpha,\beta):(\Gamma,\Kappa,\eta,\nu) \rightarrow (\Gamma',\Kappa',\eta',\nu')$ be a morphism in $\cat{Fut}(P,Q)$.
Let $I = \Phi(\Gamma,\Kappa,\eta,\nu)$, $I' = \Phi(\Gamma', \Kappa', \eta', \nu')$.
Define a functor $\Phi(\alpha,\beta) : I' \rightarrow I$ as follows.
$\Phi(\alpha,\beta)$ is the identity on objects.
We set $\Phi(\alpha,\beta)$ to be the identity on morphisms entirely in $P$ or $Q$.
As a consequence, it preserves all identity morphisms.
On crossover morphism sets, we define $\Phi(\alpha,\beta)$ via pullback along either $\alpha$ or $\beta$.
Specifically,
\[
	I'(p,q) \cong Q(\Gamma'(p),q) \xrightarrow{\alpha^{*}_{p}}
		Q(\Gamma(p),q) \cong I(p,q)
\]
\[
	I'(q,p) \cong P(\Kappa'(q),p) \xrightarrow{\beta^{*}_{q}} 
		P(\Kappa(q),p) \cong I(q,p).
\]

\begin{prop}\label{prop:phi-fun}
The mappings above define a functor $\Phi(\alpha,\beta) : I' \rightarrow I$.
\end{prop}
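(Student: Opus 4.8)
The plan is to verify directly that $\Phi(\alpha,\beta)$ respects composition; since it is already declared to be the identity on objects and on morphisms lying entirely inside $P$ or $Q$ (hence automatically preserving identities and composites of such morphisms), the only work is to check compatibility with the crossover composites described in the construction of $I_{\Gamma,\Kappa}$. First I would fix notation: for a crossover arrow $f : p \to q$ in $I'$, write $\bar f : \Gamma'(p) \to q$ for the corresponding arrow, so that $\Phi(\alpha,\beta)(f)$ is the arrow of $I$ whose bar is $\overline{\Phi(\alpha,\beta)(f)} = \bar f \circ \alpha_p : \Gamma(p) \to q$; symmetrically for $g : q \to p$ one has $\overline{\Phi(\alpha,\beta)(g)} = \bar g \circ \beta_q$.

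The verification then splits into the same cases that appeared in Proposition~\ref{prop:comp-assoc}, according to how many times a composable pair crosses between $P$ and $Q$, and whether it starts in $P$ or in $Q$ (the latter being symmetric). For a composite $g \circ f$ with $f : p \to q$, $g : q \to q'$ one computes $\overline{\Phi(\alpha,\beta)(g \circ f)} = (g \circ \bar f)\circ \alpha_p = g \circ (\bar f \circ \alpha_p) = g \circ \overline{\Phi(\alpha,\beta)(f)} = \overline{\Phi(\alpha,\beta)(g)\circ \Phi(\alpha,\beta)(f)}$, using that $\Phi(\alpha,\beta)$ is the identity on arrows of $Q$; the case $f : p \to p'$, $g : p' \to q$ uses naturality of $\alpha$ to slide $\alpha$ past $\Gamma f$, i.e. $\bar g \circ \Gamma'(f) \circ \alpha_p = \bar g \circ \alpha_{p'} \circ \Gamma(f)$. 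The genuinely substantive case is the mixed one, $f : p \to q$, $g : q \to p'$, where the composite in $I'$ is $\bar g \circ \Kappa'\bar f \circ \eta'_p$ and one must show its image under $\beta^*$-type rewriting equals $\overline{g}\circ\beta_q \circ \Kappa\overline{\bar f \circ \alpha_p}\,$-style composite; here the morphism condition \eqref{eq:morph-cond}, namely $(\beta * \alpha)\circ \eta = \eta'$, together with naturality of $\eta$ (or $\eta'$) and functoriality of $\Kappa$, $\Kappa'$, is exactly what makes the two expressions agree. The analogous mixed case $f : q \to p$, $g : p \to q$ uses the other square in \eqref{eq:morph-cond}, $(\alpha * \beta)\circ \nu = \nu'$.

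I expect the main obstacle to be bookkeeping rather than conceptual difficulty: one has to be disciplined about which functor ($\Gamma$ vs $\Gamma'$, $\Kappa$ vs $\Kappa'$) and which natural transformation each symbol refers to, and about the direction of $\Phi(\alpha,\beta)$ (from $I'$ to $I$, contravariantly in the $\cat{Fut}$-morphism), so that the pullbacks $\alpha^*_p$ and $\beta^*_q$ are applied on the correct side. As in Proposition~\ref{prop:comp-assoc}, one could instead observe that both $I_{\Gamma,\Kappa}$ and $I_{\Gamma',\Kappa'}$ arise as Grothendieck constructions of functors into $\cat{Cat}$ and that $(\alpha,\beta)$ induces a natural transformation of those classifying functors, whence $\Phi(\alpha,\beta)$ is functorial for formal reasons; I would mention this as a remark but carry out the hands-on check of the representative cases above for completeness.
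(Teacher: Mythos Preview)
Your proposal is correct and follows essentially the same approach as the paper: reduce to the crossover cases, handle $p\to q\to q'$ by direct associativity, $p\to p'\to q$ by naturality of $\alpha$, and the mixed case $p\to q\to p'$ by combining naturality of $\beta$ with the morphism condition $(\beta*\alpha)\circ\eta=\eta'$ from \eqref{eq:morph-cond}. The paper carries out exactly these three cases (leaving the $Q$-starting ones as symmetric) and does not pursue the Grothendieck-construction shortcut you mention, though it alludes to it in the remark following Proposition~\ref{prop:comp-assoc}.
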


\begin{proof}
Since $\Phi(\alpha,\beta)$ is the identity on morphisms in $P$ and in $Q$, we need only to verify that $\Phi(\alpha, \beta)$ preserves composition when crossover morphisms are involved, and this is straightforward.
\begin{itemize}
	\item If $f : p \rightarrow q$ and $g : q \rightarrow q'$ in $I'$, then $\overline{g \circ f} = g \circ \bar{f} \in Q(\Gamma'p,q')$.  
	Therefore 
	\begin{eqnarray*}
		\Phi(\alpha,\beta)(g \circ f) 
			& = & \alpha^{*}_{p}(g \circ \bar{f})	\\
			& = & g \circ \bar{f} \circ \alpha_{p}			\\
			& = & g \circ (\alpha^{*}_{p}(\bar{f}))	\\
			& = & \Phi(\alpha,\beta)(g) \circ \Phi(\alpha,\beta)(f).
	\end{eqnarray*}
	\item Let $f : p \rightarrow p'$ and $g : p' \rightarrow q$ in $I'$. 
	Then 
	\[
		\Phi(\alpha,\beta)(g \circ f) 
		= \bar{g} \circ (\Gamma'f \circ \alpha_{p}),
	\]
	while
	\[
		\Phi(\alpha,\beta)(g) = \bar{g} \circ \alpha_{p'}
		\quad \text{and} \quad
		\Phi(\alpha,\beta)(f) = f.
	\]
	Therefore we see that $\Phi(\alpha,\beta)(g \circ f) = \Phi(\alpha,\beta)(g) \circ \Phi(\alpha,\beta)(f)$ by considering the diagram,
	\[
		\begin{tikzcd}
			\Gamma(p) \arrow[r, "\Gamma f"]
				\arrow[d, "\alpha_{p}"']
				& \Gamma(p') \arrow[d, "\alpha_{p'}"] \\
			\Gamma'(p) \arrow[r, "\Gamma'f"']
				& \Gamma'(p') \arrow[r, "\bar{g}"'] & q
		\end{tikzcd}
	\]
	which commutes since $\alpha$ is a natural transformation.
	\item Suppose that $f : p \rightarrow q$ and $g : q \rightarrow p'$.
	Then $g \circ f = \bar{g} \circ \Kappa'(f) \circ \eta'_{p}$, which is a morphism in $P(p,p')$, so $\Phi(\alpha,\beta)$ does not change it.
	On the other hand, $\Phi(\alpha,\beta)(f) = \bar{f} \circ \alpha_{p}$ and $\Phi(\alpha,\beta)(g) = \bar{g} \circ \beta_{q}$.
	It suffices then to show that the diagram 
	\[
		\begin{tikzcd}
			P  \arrow[r, "\eta_{p}"]
				\arrow[drr, "\eta'_{p}"']
				& \Kappa\Gamma(p) \arrow[r, "\Kappa(\alpha_{p})"]
				& \Kappa\Gamma'(p) \arrow[r, "\Kappa\bar{f}"]
					\arrow[d, "\beta_{\Gamma'(p)}"]
				& \Kappa(q) \arrow[d, "\beta_{q}"] \\
			& & \Kappa'\Gamma'(p) \arrow[r, "\Kappa'\bar{f}"']
				& \Kappa'(q) \arrow[d, "\bar{g}"] \\
			& & & p'
		\end{tikzcd}
	\]
	commutes.
	The left triangle commutes because $\beta_{\Gamma'(p)} \circ \Kappa(\alpha_{p}) = (\beta * \alpha)_{p}$.
	The square commutes because $\beta$ is a natural transformation.
\end{itemize}
In conclusion, $\Phi(\alpha,\beta):I'\rightarrow I$ is a functor. 
\end{proof} 

\begin{lem}\label{lem:phi-id}
$\Phi(\Iota_{\Gamma},\Iota_{\Kappa})$ is the identity.
\end{lem}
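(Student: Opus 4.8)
The plan is to unwind the definition of $\Phi$ on both objects and morphisms in the case where the morphism is the identity $(\Iota_\Gamma, \Iota_\Kappa) : (\Gamma,\Kappa,\eta,\nu) \to (\Gamma,\Kappa,\eta,\nu)$, and to check that the resulting functor $\Phi(\Iota_\Gamma,\Iota_\Kappa) : I_{\Gamma,\Kappa} \to I_{\Gamma,\Kappa}$ agrees with the identity functor on objects and on every morphism. Since $\Phi(\alpha,\beta)$ is by construction the identity on objects, the only content is in the morphism sets, and there only in the two crossover hom-sets $I_{\Gamma,\Kappa}(p,q)$ and $I_{\Gamma,\Kappa}(q,p)$, since $\Phi(\alpha,\beta)$ is defined to be the identity on morphisms lying entirely in $P$ or entirely in $Q$.

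First I would recall from the definition (Section~\ref{sec:Phi-functor}, ``On Morphisms'') that on the crossover set $I'(p,q) \cong Q(\Gamma'(p),q)$ the functor $\Phi(\alpha,\beta)$ acts by precomposition $\alpha_p^* : Q(\Gamma'(p),q) \to Q(\Gamma(p),q)$, $\bar f \mapsto \bar f \circ \alpha_p$, and symmetrically on $I'(q,p) \cong P(\Kappa'(q),p)$ it acts by $\beta_q^* : P(\Kappa'(q),p) \to P(\Kappa(q),p)$, $\bar g \mapsto \bar g \circ \beta_q$. Now specialize to $\alpha = \Iota_\Gamma$ and $\beta = \Iota_\Kappa$: here $\Gamma' = \Gamma$ and $\Kappa' = \Kappa$, so the domains and codomains of these maps coincide, and the component $(\Iota_\Gamma)_p$ is the identity morphism $1_{\Gamma(p)}$ in $Q$, while $(\Iota_\Kappa)_q = 1_{\Kappa(q)}$ in $P$. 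Therefore $\alpha_p^*(\bar f) = \bar f \circ 1_{\Gamma(p)} = \bar f$ and $\beta_q^*(\bar g) = \bar g \circ 1_{\Kappa(q)} = \bar g$, so $\Phi(\Iota_\Gamma,\Iota_\Kappa)$ is the identity on both crossover hom-sets as well.

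Combining these observations: $\Phi(\Iota_\Gamma,\Iota_\Kappa)$ is the identity on objects, the identity on morphisms in $P$, the identity on morphisms in $Q$, and the identity on both crossover hom-sets, hence it is the identity functor on $I_{\Gamma,\Kappa} = \Phi(\Gamma,\Kappa,\eta,\nu)$. There is no real obstacle here; the only point requiring a moment's attention is confirming that the identity natural transformation on the functor $\Gamma$ has each component equal to the identity morphism in $Q$ (and likewise for $\Kappa$), so that precomposition by it is literally the identity map of the relevant hom-set — this is immediate from the definition of the identity natural transformation. Thus the lemma follows formally from the definition of $\Phi$ on morphisms (Proposition~\ref{prop:phi-fun}) with no computation beyond unit laws.
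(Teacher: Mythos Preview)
Your proof is correct and follows exactly the paper's approach: the paper's proof is the single sentence ``This is trivial, since pulling back along identity maps is simply the identity,'' and you have simply unpacked that observation in detail. The content is identical---precomposition by the identity components $1_{\Gamma(p)}$ and $1_{\Kappa(q)}$ is the identity on the crossover hom-sets, and $\Phi(\alpha,\beta)$ is already the identity elsewhere by definition.
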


\begin{proof}  
This is trivial, since pulling back along identity maps is simply the identity.
\end{proof}

\begin{lem}\label{lem:phi-comp}
$\Phi$ preserves composition.
\end{lem}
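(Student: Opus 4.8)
The plan is to verify the identity of functors directly, reducing everything to crossover morphisms, where the definition of $\Phi$ has content. Fix composable morphisms $(\alpha,\beta):(\Gamma,\Kappa,\eta,\nu) \to (\Gamma',\Kappa',\eta',\nu')$ and $(\alpha',\beta'):(\Gamma',\Kappa',\eta',\nu') \to (\Gamma'',\Kappa'',\eta'',\nu'')$ in $\cat{Fut}(P,Q)$, and write $I_1, I_2, I_3$ for the images under $\Phi$ of the source, middle, and target objects. Since composition in $\cat{Fut}(P,Q)$ is componentwise, $(\alpha',\beta')(\alpha,\beta) = (\alpha'\circ\alpha,\beta'\circ\beta)$, and $\Phi$ reverses direction, what must be shown is
\[
  \Phi(\alpha'\circ\alpha,\,\beta'\circ\beta) = \Phi(\alpha,\beta)\circ\Phi(\alpha',\beta') : I_3 \to I_1 .
\]
Both sides are the identity on objects, so it suffices to compare their values on morphisms.

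By the definition of $\Phi$ on morphisms (Proposition~\ref{prop:phi-fun}), the functor attached to any morphism of $\cat{Fut}(P,Q)$ is the identity on arrows lying entirely in $P$ or entirely in $Q$; hence both sides above agree on all such arrows, and in particular both preserve identities. It remains to treat a crossover arrow $f : p \to q$, represented by $\bar f \in Q(\Gamma''(p),q)$; the case of an arrow $q \to p$ is handled verbatim with $\alpha,\Gamma,Q$ replaced by $\beta,\Kappa,P$. The left-hand functor sends $f$ to the arrow represented by $(\alpha'\circ\alpha)^{*}_{p}(\bar f) = \bar f \circ \alpha'_{p}\circ\alpha_{p}$. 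The right-hand functor first applies $\Phi(\alpha',\beta')$, producing the arrow of $I_2$ represented by $(\alpha')^{*}_{p}(\bar f) = \bar f\circ\alpha'_{p}$, and then applies $\Phi(\alpha,\beta)$, producing the arrow represented by $\alpha^{*}_{p}\bigl(\bar f\circ\alpha'_{p}\bigr) = (\bar f\circ\alpha'_{p})\circ\alpha_{p}$. These agree by associativity of composition in $Q$, so the two functors coincide.

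Conceptually, $\Phi$ on morphisms is assembled entirely from the pullback operations $\alpha\mapsto\alpha^{*}_{p}$ and $\beta\mapsto\beta^{*}_{q}$ on hom-sets of $Q$ and $P$, which are contravariantly functorial; the content of the lemma is precisely $(\alpha'\circ\alpha)^{*} = \alpha^{*}\circ(\alpha')^{*}$ on the relevant hom-sets, together with its analogue for $\beta$. There is no real obstacle: the only thing requiring attention is correctly tracking the direction reversal and the componentwise composition law of $\cat{Fut}(P,Q)$, after which the verification is a one-line application of associativity. Combined with Lemma~\ref{lem:phi-id}, this shows that $\Phi : \cat{Fut}(P,Q) \to \cat{Emb}(P,Q)$ is a contravariant functor.
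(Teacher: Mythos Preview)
Your proof is correct and follows essentially the same approach as the paper: reduce to crossover morphisms (since both sides are the identity on $P$ and $Q$), then invoke the contravariant functoriality of pullback, $(\alpha'\alpha)^{*} = \alpha^{*}(\alpha')^{*}$. You simply spell out in detail what the paper compresses into a single line.
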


\begin{proof}
We need to show that if $(\alpha',\beta') : (\Gamma',\Kappa',\eta',\nu') \rightarrow (\Gamma'',\Kappa'',\eta'',\nu'')$, then $\Phi ( (\alpha',\beta')(\alpha,\beta) ) = \Phi(\alpha,\beta) \Phi(\alpha',\beta')$.
It suffices to check crossover morphisms, since everything is the identity restricted to $P$ or $Q$.
The result follows from the fact that $(\alpha'\alpha)^{*} = \alpha^{*} (\alpha')^{*}$, and similarly for $\beta$.
\end{proof}

\begin{thm}\label{thm:phi-full-faithful}
$\Phi : \cat{Fut}(P,Q) \rightarrow \cat{Emb}(P,Q)$ is a full and faithful contravariant functor.
\end{thm}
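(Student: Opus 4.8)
The plan is to dispose of functoriality quickly — it is essentially already proved — and then reduce both fullness and faithfulness to a single Yoneda-style reconstruction of the natural transformations $\alpha,\beta$ from the functor $\Phi(\alpha,\beta)$. First I would check that $\Phi$ is a contravariant functor: Proposition~\ref{prop:phi-fun} gives that each $\Phi(\alpha,\beta)\colon I'\to I$ is a functor, and since it is the identity on objects and on the morphisms lying entirely in $P$ or in $Q$, it commutes with the canonical embeddings of $P$ and of $Q$, hence is a morphism of cospans, i.e.\ a morphism in $\cat{Emb}(P,Q)$ in the contravariant direction. Lemmas~\ref{lem:phi-id} and~\ref{lem:phi-comp} then supply preservation of identities and of composition.

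The heart of the argument is the following observation. In $I_{\Gamma',\Kappa'}$ the composition rules give, for every crossover arrow $f\colon p\to q$, the factorization $f=\bar f\circ e_p$, where $\bar f$ is viewed as a morphism of $Q$ and $e_p\colon p\to\Gamma'p$ is the crossover arrow corresponding to $1_{\Gamma'p}$. Consequently any morphism $\Theta\colon I'\to I$ in $\cat{Emb}(P,Q)$ — which is forced to be the identity on objects and on all morphisms of $P$ and of $Q$ — is completely determined on crossovers by the family $\alpha_p:=\Theta(e_p)$, viewed under $I(p,\Gamma'p)\cong Q(\Gamma p,\Gamma'p)$ as an arrow $\Gamma p\to\Gamma'p$, and dually by $\beta_q\colon\Kappa q\to\Kappa'q$; indeed, unwinding the composition rules in $I$ shows that $\Theta$ acts on $I'(p,q)$ as precomposition $\alpha_p^{\ast}$ and on $I'(q,p)$ as $\beta_q^{\ast}$. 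Faithfulness drops out immediately: $\Phi(\alpha,\beta)$ sends $e_p$ to $\alpha_p$ and the $Q$-side analogue to $\beta_q$, so $\Phi(\alpha,\beta)$ determines $(\alpha,\beta)$.

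For fullness I would take a morphism $\Theta\colon I'\to I$ in $\cat{Emb}(P,Q)$, define $\alpha,\beta$ as above, and verify that $(\alpha,\beta)$ is a morphism in $\cat{Fut}(P,Q)$ — for then $\Theta=\Phi(\alpha,\beta)$ by the previous step. Naturality of $\alpha$ comes from applying $\Theta$ to the crossover arrow $p\to\Gamma'p'$ corresponding to $\Gamma'u$, for $u\colon p\to p'$ in $P$: factoring this arrow as $e_{p'}\circ u$ produces $\overline{\alpha_{p'}}\circ\Gamma u$, whereas $\Theta=\alpha_p^{\ast}$ produces $\Gamma'u\circ\overline{\alpha_p}$, and their equality is exactly the naturality square; $\beta$ is handled dually. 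For the coherence conditions~\eqref{eq:morph-cond} I would apply $\Theta$ to a round-trip composite $g\circ f$ with crossovers $f\colon p\to q$ and $g\colon q\to p'$: this composite lies in $P$, so is fixed by $\Theta$, and by definition equals $\bar g\circ\Kappa'(\bar f)\circ\eta'_p$, while the two-crossover composition formula gives $\Theta(g)\circ\Theta(f)=\bar g\circ\overline{\beta_q}\circ\Kappa(\bar f)\circ\Kappa(\overline{\alpha_p})\circ\eta_p$; specializing $\bar f$ and $\bar g$ to the appropriate identities collapses the resulting identity to $\eta'_p=(\beta\ast\alpha)_p\circ\eta_p$, the first diagram of~\eqref{eq:morph-cond}, and the mirror-image round-trip $q\to p\to q'$, which instead inserts $\nu$, yields $(\alpha\ast\beta)\circ\nu=\nu'$.

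I expect the only genuine obstacle to be this last verification: carrying the two-crossover composition formula carefully enough — keeping $\eta'$ on the $I'$ side separate from $\eta$ on the $I$ side, and invoking naturality of $\eta$ where it is needed — so that the specialization isolates precisely the horizontal composite $\beta\ast\alpha$ and leaves no residual terms. Everything else is routine diagram-chasing with the three composition rules defining $I_{\Gamma,\Kappa}$.
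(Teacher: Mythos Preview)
Your proposal is correct and follows essentially the same Yoneda-style strategy as the paper: both define $\alpha_p$ as the image under the given cospan morphism of the distinguished crossover $e_p$ (the paper's $i_p$) corresponding to $1_{\Gamma'p}$, verify naturality by factoring the crossover $p\to\Gamma'p'$ in two ways, and obtain the coherence triangles from the round-trip composite $q\to\Kappa'q\to\Gamma'\Kappa'q$ (or its mirror). The only differences are organizational: you extract faithfulness immediately from the factorization $f=\bar f\circ e_p$ before turning to fullness, whereas the paper handles faithfulness at the end; and your verification of~\eqref{eq:morph-cond} compares $\Theta(g\circ f)$ with $\Theta(g)\circ\Theta(f)$ for a general round trip and then specializes, while the paper computes $(\alpha\ast\beta)_q\circ\nu_q$ directly---but these amount to the same calculation.
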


\begin{proof}
Proposition~\ref{prop:phi-fun} and Lemmas~\ref{lem:phi-id} and~\ref{lem:phi-comp} show that $\Phi$ is a contravariant functor.
To see that $\Phi$ is full, we use something along the lines of a Yoneda argument.
Let
\[
	\begin{tikzcd}
		& I' \arrow[dd, "F"] \\
		P \arrow[ur, hook] \arrow[dr, hook'] & & Q \arrow[ul, hook'] \arrow[dl, hook] \\
		& I
	\end{tikzcd}
\]
be a morphism in $\cat{Emb}(P,Q)$, where $I' = \Phi(\Gamma', \Kappa', \eta', \nu')$ and $I = \Phi(\Gamma, \Kappa, \eta, \nu)$.
We define a natural transformation $\alpha : \Gamma \Rightarrow \Gamma'$ as follows.
Let $p \in P$. 
Then $\alpha_{p} : \Gamma(p) \rightarrow \Gamma'(p)$ is the image of the identity under the mapping
\[
	Q(\Gamma'(p),\Gamma'(p)) \cong I'(p,\Gamma'(p)) \xrightarrow{F} I(p,\Gamma'(p)) \cong Q(\Gamma(p),\Gamma'(p)).
\]
Let $i_{p} : p \rightarrow \Gamma'p$ be the unique arrow such that $\bar{i}_{p} = 1_{\Gamma'p}$. 
Then ${\alpha}_{p} = \overline{F(\iota_{p})}$. 
We claim that the $\alpha_{p}$ are the components of a natural transformation.
Suppose $\varphi : p_{1} \rightarrow p_{2}$ is a morphism in $P$.

Since $F:I' \rightarrow I$ is a functor, the diagram
\[
	\begin{tikzcd}
		I'(\Gamma'(p_{1}),\Gamma'(p_{2})) \times I'(p_{1},\Gamma'(p_{1}))
			\arrow[r, "F \times F"]
			\arrow[d]
		& I(\Gamma'(p_{1}),\Gamma'(p_{2})) \times I(p_{1},\Gamma'(p_{1}))
			\arrow[d]	\\
		I'(p_{1}, \Gamma'(p_{2}))
			\arrow[r, "F"']
		& I(p_{1}, \Gamma'(p_{2}))
	\end{tikzcd}
\]
commutes, in which the vertical arrows are composition.  
Let $\gamma : p_{1} \rightarrow \Gamma'p_{2}$ be the unique element of $I'(p_{1}, \Gamma'p_{2})$ that satisfies $\bar{\gamma} = \Gamma'\varphi$ in $Q(\Gamma'p_{1}, \Gamma'p_{2})$.
Tracking $(\Gamma'(\varphi),i_{p_{1}})$, we see that $F(\gamma) = \Gamma'(\varphi) \circ F(i_{p_{1}})$.

Similarly, the diagram
\[
	\begin{tikzcd}
		I'(p_{2},\Gamma'(p_{2})) \times I'(p_{1},p_{2})
			\arrow[r, "F \times F"]
			\arrow[d]
		& I(p_{2},\Gamma'(p_{2})) \times I(p_{1},p_{2})
			\arrow[d] \\
		I'(p_{1}, \Gamma'(p_{2}))
			\arrow[r, "F"']
		& I(p_{1}, \Gamma'(p_{2}))
	\end{tikzcd}
\]
commutes, where the vertical arrows are again composition.
This time we track $(i_{p_{2}},\varphi)$.
Since 
\[
	\overline{\iota_{p_{2}} \circ \varphi} = 1_{\Gamma'(p_{2})} \circ \Gamma'(\varphi) = \Gamma'(\varphi)
\]
by the definition of the composition in $I'$, it follows that $i_{p_{2}} \circ \varphi = \gamma$.
Therefore $F(\gamma) = F(\iota_{p_{2}}) \circ \Gamma(\varphi)$.

We now have that $\Gamma'\varphi \circ F(i_{p_{1}}) = F(i_{p_{2}}) \circ \Gamma\varphi$ in $I(p_{1},\Gamma'(p_{2}))$.  
The image of this equality in $Q(\Gamma(p_{1}),\Gamma(p_{2}))$ is $\Gamma'(\varphi) \circ \alpha_{p_{1}} = \alpha_{p_{2}} \circ \Gamma(\varphi)$; that is, the diagram
\[
	\begin{tikzcd}
		\Gamma(p_{1}) 
			\arrow[r, "\alpha_{p_{1}}"]
			\arrow[d, "\Gamma(\varphi)"']
		& \Gamma'(p_{1})
			\arrow[d, "\Gamma'(\varphi)"]
			\\
		\Gamma(p_{2})
			\arrow[r, "\alpha_{p_{2}}"']
		& \Gamma'(p_{2})
	\end{tikzcd}
\]
commutes, and so the $\alpha_{p}$ assemble to form a natural transformation, $\alpha : \Gamma \Rightarrow \Gamma'$.

Similarly, we use $F$ to construct $\beta : \Kappa \Rightarrow \Kappa'$.

Now we show that the triangles (\ref{eq:morph-cond}) commute.  
Let us take a moment to recall the horizontal composition of natural transformations:  to define the component $(\alpha * \beta)_{q} : \Gamma \Kappa(q) \rightarrow \Gamma'\Kappa'(q)$, we compose along either path in the commuting square,
\[
	\begin{tikzcd}
		\Gamma \Kappa(q)
			\arrow[r, "\Gamma(\beta_{q})"]
			\arrow[d, "\alpha_{\Kappa(q)}"']
		& \Gamma \Kappa'(q)
			\arrow[d, "\alpha_{\Kappa'(q)}"]
			\\
		\Gamma'\Kappa(q)
			\arrow[r, "\Gamma'(\beta_{q})"']
		& \Gamma'\Kappa'(q).
	\end{tikzcd}
\]
For all $q \in Q \subset I'$, let $j_{q} : q \rightarrow \Kappa'(q)$ be the unique arrow that satisfies $\bar{j}_{q} = 1_{\Kappa'(q)}$ in $P(\Kappa'(q),\Kappa'(q))$.
We remark that, from the definitions,
\begin{equation}\label{eq:nu-factors}
	i_{\Kappa'(q)} \circ j_{q} = \bar{i}_{\Kappa'(q)} \circ \Gamma'(\bar{j}_{q}) \circ \nu'_{q}
	= 1_{\Gamma'\Kappa'(q)} \circ \Gamma'(1_{\Kappa'(q)}) \circ \nu'_{q} = \nu'_{q}.
\end{equation}

On the other hand,
\begin{eqnarray*}
	(\alpha * \beta)_{q} \circ \nu_{q}
		& = & \alpha_{\Kappa'(q)} \circ \Gamma(\beta_{q}) \circ \eta_{q} \\
		& = & \overline{F(i_{\Kappa'(q)})} \circ \Gamma(\overline{F(j_{q})}) \circ \eta_{q} \\
		& = & F(i_{\Kappa'(q)}) \circ F(j_{q}) \\
		& = & F( \eta_{q} ) \quad \text{by (\ref{eq:nu-factors})} \\
		& = & \eta_{q}
\end{eqnarray*}
since $\eta_{q}$ is not a crossover morphism.

The commutativity for the other triangle follows by a symmetric argument.
Therefore, $F = \Phi(\alpha,\beta)$, and $\Phi$ is full.

Suppose now that $\Phi(\alpha_{1},\beta_{1}) = \Phi(\alpha_{2},\beta_{2})$.
Then for all $p \in P$ and all $q \in Q$, and all $f : p \rightarrow q \in I'$, we have that $\alpha_{1}^{*}(f) = \alpha_{2}^{*}(f)$.
In particular, if we take $q = \Gamma'(p)$ and $f = i_{p} : p \rightarrow \Gamma'(p)$, then $\bar{i}_{p} = 1_{\Gamma'(p)}$, so we find that $\alpha_{1} = \alpha_{2}$.
In the same way, $\beta_{1} = \beta_{2}$, and so $\Phi$ is faithful.
\end{proof}

\subsection{A functor of bicategories} \label{sec:functor-bicategories}

In this section we define an oplax functor, $\Phi : \cat{Fut} \rightarrow \cat{Emb}$.  
It is the identity on objects, and for each $P,Q$, we already have a functor $\Phi : \cat{Fut}(P,Q)^{op} \rightarrow \cat{Emb}(P,Q)$.
Let $(\Gamma, \Kappa , \eta, \nu)$ and $(\Lambda, \Mu, \sigma, \tau)$ be horizontally composable future equivalences, which we picture as:
\[
	\begin{tikzcd}
		P \arrow[r, bend left, "\Gamma"]
		& Q \arrow[l, bend left, "\Kappa"]
			\arrow[r, bend left, "\Lambda"]
		& R. \arrow[l, bend left, "\Mu"]
	\end{tikzcd}
\]

\begin{prop}\label{prop:Phi-homo}
There is a natural embedding
\[
	 \Phi((\Lambda,\Mu) * (\Gamma,\Kappa)) \hookrightarrow \Phi(\Lambda,\Mu) \circ \Phi(\Gamma,\Kappa)
\]
that makes the diagram
\[
	\begin{tikzcd}
		& \Phi((\Lambda,\Mu) * (\Gamma,\Kappa))
			\arrow[dd, hook] \\
		P
			\arrow[ur, hook] 
			\arrow[dr, hook]
			& & R
				\arrow[ul, hook']
				\arrow[dl, hook'] \\
		&\Phi(\Lambda,\Mu) \circ \Phi(\Gamma,\Kappa)
	\end{tikzcd}
\]
commute.
\end{prop}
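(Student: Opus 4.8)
The plan is to build the comparison $1$-cell by hand and then verify it is an embedding making the triangle commute. Write $I=\Phi(\Gamma,\Kappa)$ and $J=\Phi(\Lambda,\Mu)$, so that $\Phi(\Lambda,\Mu)\circ\Phi(\Gamma,\Kappa)$ is the pushout $J\circ I$ of $I\hookleftarrow Q\hookrightarrow J$. Its objects are $P_{0}\amalg Q_{0}\amalg R_{0}$, and by Proposition~\ref{prop:pushout-reln} its crossover hom-sets from $P$ to $R$ are $(J\circ I)(p,r)=\coprod_{q\in Q}J(q,r)\times I(p,q)/\!\sim\;=\;\coprod_{q\in Q}R(\Lambda q,r)\times Q(\Gamma p,q)/\!\sim$, and symmetrically for $R$ to $P$. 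On the other side, $\Phi((\Lambda,\Mu)*(\Gamma,\Kappa))=I_{\Lambda\Gamma,\Kappa\Mu}$ has objects $P_{0}\amalg R_{0}$ and crossover hom-sets $I_{\Lambda\Gamma,\Kappa\Mu}(p,r)\cong R(\Lambda\Gamma p,r)$ and $I_{\Lambda\Gamma,\Kappa\Mu}(r,p)\cong P(\Kappa\Mu r,p)$.

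The first real step is a normal-form lemma for the pushout hom-sets: for $p\in P$ and $r\in R$, the map $h\mapsto[\,\tilde h,\,i_{p}\,]$, where $\tilde h\in J(\Gamma p,r)$ has $\overline{\tilde h}=h$ and $i_{p}\in I(p,\Gamma p)$ has $\bar{i}_{p}=1_{\Gamma p}$, is a bijection $R(\Lambda\Gamma p,r)\xrightarrow{\ \cong\ }(J\circ I)(p,r)$, with inverse $[g,f]\mapsto\bar g\circ\Lambda(\bar f)$. Well-definedness of the inverse on $\sim$-classes is immediate from the remark after Proposition~\ref{prop:pushout-reln} ($(gh,f)\sim(g,hf)$), and the two composites are identities because $\bar i_{p}=1_{\Gamma p}$ and because one has $[g,f]=[\,g\circ\bar f,\,i_{p}\,]$ in $J\circ I$ — obtained from a single generating move with $h=\bar f$, absorbing the $I$-component into the $J$-component. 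A symmetric statement identifies $(J\circ I)(r,p)$ with $P(\Kappa\Mu r,p)$.

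Now define $\Psi\colon\Phi((\Lambda,\Mu)*(\Gamma,\Kappa))\to J\circ I$ to be the evident inclusion $P_{0}\amalg R_{0}\hookrightarrow P_{0}\amalg Q_{0}\amalg R_{0}$ on objects, the identity on all morphisms lying entirely in $P$ or entirely in $R$, and on crossover hom-sets the composite of the canonical isomorphisms $I_{\Lambda\Gamma,\Kappa\Mu}(p,r)\cong R(\Lambda\Gamma p,r)$ (resp.\ $I_{\Lambda\Gamma,\Kappa\Mu}(r,p)\cong P(\Kappa\Mu r,p)$) with the bijections of the previous paragraph. Since $\Psi$ is injective on objects and bijective on every hom-set of the source, it is automatically full, faithful and injective on objects — i.e.\ an embedding — provided it is a functor. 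Preservation of identities is clear, and preservation of composition is a short case analysis on how many of two composable arrows are crossovers; when at most one is a crossover the check is of exactly the routine flavour of the identity checks in Proposition~\ref{prop:comp-assoc}, using the composition rules of $I$ and $J$ together with the normal-form bijection. The substantive case is a round trip $p\xrightarrow{f}r\xrightarrow{g}p'$ (and its mirror image): on the source side the composite $g\circ f$ is computed in $I_{\Lambda\Gamma,\Kappa\Mu}$ using the unit $(\Kappa\sigma\Gamma)\eta$ of the composed future equivalence, while on the target side it is the iterated composition in $J\circ I$, which by Lemma~\ref{lem:comp-well-def} passes through $\Gamma p\in Q$ via the double-crossover rule in $J=I_{\Lambda,\Mu}$ (using $\sigma$) and then the double-crossover rule in $I=I_{\Gamma,\Kappa}$ (using $\eta$). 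Unwinding both sides, each reduces to $\bar g\circ\Kappa\Mu(\bar f)\circ\Kappa(\sigma_{\Gamma p})\circ\eta_{p}$ after using naturality of $\eta$ and $\sigma$ and the explicit formula for $(\Lambda,\Mu)*(\Gamma,\Kappa)$. Finally, the triangle commutes by construction, since $\Psi$ is the identity on the objects and hom-sets coming from $P$ and from $R$, which are precisely the data defining the two legs $P\hookrightarrow J\circ I\hookleftarrow R$.

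I expect the main obstacle to be the bookkeeping in this round-trip case: one must route the iterated pushout composition through $\Gamma p$ (resp.\ $\Mu r$) rather than an arbitrary object of $Q$, and keep the two composition rules of Proposition~\ref{prop:comp-assoc}, in $I$ and in $J$, straight while matching them against the unit of the composed future equivalence. A cleaner but essentially equivalent organization would use the remark that each $I_{\Gamma,\Kappa}$, and likewise the pushout $J\circ I$, is a Grothendieck construction, so that $\Psi$ arises from a morphism of the corresponding diagrams of categories and the case analysis collapses to a single $2$-cell identity. Naturality of $\Psi$ in the two future equivalences — which is what assembles these comparison $1$-cells into the oplax functor of Section~\ref{sec:functor-bicategories} — then follows by inspection, since $\Psi$ is defined purely in terms of the structure maps $\bar{(-)}$, $i_{p}$ and $j_{r}$.
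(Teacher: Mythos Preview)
Your approach is essentially the same as the paper's: both hinge on the normal-form bijection $R(\Lambda\Gamma p,r)\cong(J\circ I)(p,r)$ given by $h\mapsto[\tilde h,i_{p}]$ with inverse $[g,f]\mapsto\bar g\circ\Lambda(\bar f)$, and both verify this is well defined on $\sim$-classes in the same way. The two write-ups are complementary in their emphasis: you are more careful than the paper about checking that the resulting map is actually a \emph{functor} (in particular the round-trip case $p\to r\to p'$, which the paper does not address), while the paper spells out the naturality square in the two morphisms of future equivalences with an explicit diagram chase, which you wave off as ``by inspection.'' Neither gap is substantive, and a complete proof is the union of the two.
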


\begin{proof}
Let $J = \Phi(\Lambda,\Mu,\sigma,\tau)$, $I = \Phi(\Gamma,\Kappa , \eta, \nu)$, and $L = \Phi((\Lambda,\Mu,\sigma,\tau) * (\Gamma, \Kappa, \eta, \nu))$.
If $p \in P$ and $r \in R$, then
\begin{eqnarray*}
	(J \circ I)(p,r)
		& = & \coprod_{q \in Q} J(q,r) \times I(p,q) / \sim	\\
		& \cong & \coprod_{q \in Q} R(\Lambda(q) , r) \times  Q(\Gamma(p),q) / \sim \\
\end{eqnarray*}
where $\sim$ is the equivalence relation of Proposition~\ref{prop:pushout-reln}.
By definition, 
\[
	L(p,r) \cong R(\Lambda\Gamma(p),r).
\]
For each $q \in Q$, define a mapping $\chi_{q} : J(q,r) \times I(p,q) \rightarrow L(p,r)$ by
\[
	\overline{\chi_{q}( q \xrightarrow{g} r , p \xrightarrow{f} q )} = \bar{g} \circ \Lambda(\bar{f})
\]
If the diagram
\[
	\begin{tikzcd}
		p \arrow[r, "f"] \arrow[dr, "f'"']
			& q \arrow[dr, "g"] \arrow[d, "h"] \\
			& q' \arrow[r, "g'"'] & r
	\end{tikzcd}
\]
commutes in $J \circ I$, then
\[
	\overline{\chi_{q'}(g',f')} 
		= \bar{g}' \circ \Lambda(\bar{f}') 
		= \bar{g}' \circ \Lambda(h \circ \bar{f}) 
		= \bar{g}' \circ \Lambda(h) \circ \Lambda(\bar{f}) 
		= \bar{g} \circ \Lambda(\bar{f}) 
		= \overline{\chi_{q}(g,f)}
\]
so $\amalg_{q} \chi_{q}$ factors through $\sim$ to define
\[
	\chi : (J \circ I)(p,r) \rightarrow L(p,r).
\]
Suppose $p \xrightarrow{f} r \in L(p,r) \cong R(\Lambda \Gamma(p),r)$.
Let $i_{p} : p \rightarrow \Gamma(p)$ be the unique morphism in $I(p,\Gamma(p))$ such that $\bar{i}_{p} = 1_{\Gamma(p)}$.
Denote by $\bar{f} : \Lambda\Gamma(p) \rightarrow r$ the image of $f$ in $R(\Lambda \Gamma(p), r)$.  
Let $\hat{f} : \Gamma(p) \rightarrow r$ the corresponding image of $\bar{f}$ under the bijection $R(\Lambda \Gamma(p),r) \cong J(\Gamma(p),r)$.
Define $\xi : L(p,r) \rightarrow (J \circ I)(p,r)$ by 
\[
	\xi(f) = [ \hat{f} , i_{p} ].
\]
We claim that $\xi$ is the inverse of $\chi$.
Indeed,
\[
	\overline{\chi_{\Gamma(p)}(\hat{f},i_{p})}
	= \bar{f} \circ \Lambda(\bar{i}_{p})
	= \bar{f} \circ \Lambda(1_{\Gamma{p}})
	= \bar{f}
\]
so $\chi\xi = 1$.

Next, let $[q \xrightarrow{g} r, p \xrightarrow{f} q] \in (J \circ I)(p,r)$.
The diagram
\[
	\begin{tikzcd}
		p \arrow[r, "i_{p}"]
			\arrow[dr, "f"']
			& \Gamma(p) \arrow[dr, "g \circ \bar{f}"]
				\arrow[d, "\bar{f}"] \\
			& q \arrow[r, "g"]
			& r
	\end{tikzcd}
\]
commutes in $J \circ I$.
It follows that if we chase the image of $\chi[g,f] \in L(p,r)$ through $R(\Lambda \Gamma(p), r)$ into $J(\Gamma(p),r)$, we get $(\chi[g,f])^{\wedge} = g \circ \bar{f}$.
Therefore $\xi(\chi[g,f]) = [g \circ \bar{f}, i_{p}] = [g,f]$ again by the above diagram. 

Now we will check that $\chi$ is natural. 
Suppose we have  morphisms of future equivalences 
\[
	(\alpha,\beta) : (\Gamma_{1},\Kappa_{1}, \eta_{1}, \nu_{1} )
	\rightarrow (\Gamma_{2},\Kappa_{2}, \eta_{2}, \nu_{2} )
\]
in $\cat{Fut}(P,Q)$, and
\[
	(\gamma,\delta) : (\Lambda_{1},\Mu_{1}, \sigma_{1}, \tau_{1} ) \rightarrow (\Lambda_{2},\Mu_{2}, \sigma_{2}, \tau_{2} )
\]
in $\cat{Fut}(Q,R)$ (Section~\ref{sec:futpq}).
For $i = 1,2$, set $I_{i} = \Phi(\Gamma_{i}, \Kappa_{i}, \eta_{i}, \nu_{i} )$, $J_{i} = \Phi(\Lambda_{i},\Mu_{i}, \sigma_{i}, \tau_{i} )$, $L_{i} = \Phi((\Lambda_{i},\Mu_{i}, \sigma_{i}, \tau_{i} ) \circ (\Gamma_{i}, \Kappa_{i}, \eta_{i}, \nu_{i} ))$.
Since $P$ and $R$ embed in both $J_{i} \circ I_{i}$ and $L_{i}$, it suffices to show that the diagram
\[
	\begin{tikzcd}
		J_{2} \circ I_{2}(p,r) 
			\arrow[r, "\chi"] 
			\arrow[d, "{(\gamma,\delta)^{*}\circ (\alpha,\beta)^{*}}"']
		& L_{2}(p,r) 
			\arrow[d, "{(\gamma*\alpha ,
			\delta*\beta)^{*}}"] 
		\\
		J_{1} \circ I_{1}(p,r) 
			\arrow[r, "\chi"'] 
			& L_{1}(p,r)
	\end{tikzcd}
\]
commutes for $p \in P$, $r \in R$ (and of course conversely, but that case is symmetric).
Here, $(\gamma * \alpha, \delta * \beta)$ is the horizontal composition of $(\gamma, \delta)$ and $(\alpha,\beta)$.

Suppose $(\bar{g} :\Lambda_{2}(q) \rightarrow r, \bar{f} : \Gamma_{2}(p) \rightarrow q)$ represents a morphism in $(J_{2} \circ I_{2}) (p,r)$.
Consider the diagram,
\[
	\begin{tikzcd}
			& \Lambda_{1} \Gamma_{2}(p)
				\arrow[r, "\Lambda_{1}(\bar{f})"]
				\arrow[dd, "\gamma_{\Gamma_{2}(p)}"]
				& \Lambda_{1}(q)
					\arrow[dd, "\gamma_{q}"']
					\arrow[dr, "\gamma^{*}(\bar{g})"]	\\
		\Lambda_{1} \Gamma_{1} (p)
			\arrow[dr, "(\gamma*\alpha)_{p}"']
			\arrow[ur, "\Lambda_{1}(\alpha_{p})"]
			& & & r \\	
		& \Lambda_{2} \Gamma_{2} (p)
			\arrow[r, "\Lambda_{2}(\bar{f})"]
		& \Lambda_{2}(q)
			\arrow[ur, "\bar{g}"]
	\end{tikzcd}
\]
The top path represents  $(\gamma*\alpha, \delta*\beta)^{*}\chi(f,g)$, while the bottom path is $\chi(g,\alpha^{*}_{p}(\bar{f}))$.
The left triangle commutes by definition of horizontal composition.
The right triangle commutes by definition of pullback morphism.
The square commutes because $\gamma$ is a natural transformation.
\end{proof}

\section{Weighted future equivalences} \label{sec:wfut}

In this section we define the category $\cat{wFut}(P,Q)$ of future equivalences between weighted categories $P$ and $Q$.
We show that $\cat{wFut}(P,Q)$ is a weighted set, and that these weighted sets piece together properly to make the 1-skeleton of a bicategory $\cat{wFut}$ into a weighted category.
Finally, we define the interleaving distance between functors $F : P \rightarrow \cat{C}$ and $G: Q \rightarrow \cat{C}$, when $P$ and $Q$ are small weighted categories.

\subsection{Future equivalences of weighted categories}

Let $\cat{wFut}(P,Q)$ be the category of future equivalences between the weighted categories $P$ and $Q$, which consist of interleavings $(\Gamma, \Kappa, \eta, \nu)$, where $\Gamma$ and $\Kappa$ are non-expansive.

Let $\cat{wFut}$ have small weighted categories as $0$-cells, and hom-categories $\cat{wFut}(P,Q)$. 
By the same observations as in Section~\ref{sec:fut-2-cat}, $\cat{wFut}$ is a 2-category.

For $(\Gamma,\Kappa,\eta,\nu) \in \cat{wFut}(P,Q)$,
the natural transformations $\eta$ and $\nu$ have induced weights via the sup norm.  
Specifically,
\[
	W(\eta) = \sup_{p \in P}\{ w(\eta_{p}:p \rightarrow \Kappa \Gamma p\}
\]
and similarly for $\nu$.  
We set 
\[
	\omega(\Gamma,\Kappa,\eta,\nu) = \frac{1}{2} \max \{ W(\eta), W(\nu) \}.
\]
The function $ \omega : \cat{wFut}(P,Q)_{0} \rightarrow [0,\infty]$ makes $\cat{wFut}(P,Q)_{0}$ into a weighted set.


\begin{prop}
  The functions $\omega: \cat{wFut}(P,Q) \to [0,\infty]$ determine a weighted category structure on the 1-skeleton of $\cat{wFut}$.
\end{prop}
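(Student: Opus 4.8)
The plan is to verify directly the two defining conditions of a weighted category (Definition~\ref{defn:weighted-category}) for the $1$-skeleton of $\cat{wFut}$ equipped with the weight function $\omega$: that identity $1$-cells have weight $0$, and that $\omega$ is subadditive under horizontal composition of weighted future equivalences. Since $\cat{wFut}$ is a $2$-category (hence strictly associative and unital), no coherence subtleties intervene, so these are the only two things to check.

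First I would dispose of the identity condition. By Section~\ref{sec:fut-2-cat}, the identity $1$-cell at a weighted category $P$ is $(\Iota_{P},\Iota_{P},\iota_{\Iota_{P}},\iota_{\Iota_{P}})$, whose structure natural transformations are the identity natural transformation on $\Iota_{P}$; their components are identity morphisms $1_{p}$, which have weight $0$ in $P$ by Definition~\ref{defn:weighted-category}(1). Hence $W(\iota_{\Iota_{P}})=0$ and $\omega(\Iota_{P},\Iota_{P},\iota_{\Iota_{P}},\iota_{\Iota_{P}})=\tfrac{1}{2}\max\{0,0\}=0$.

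Next, subadditivity. Given $(\Gamma,\Kappa,\eta,\nu)\in\cat{wFut}(P,Q)$ and $(\Lambda,\Mu,\sigma,\tau)\in\cat{wFut}(Q,R)$, I would write out the horizontal composite $(\Lambda\Gamma,\Kappa\Mu,(\Kappa\sigma\Gamma)\eta,(\Lambda\nu\Mu)\tau)$ recalled in Section~\ref{sec:fut-2-cat}, read off the component at $p\in P$ of the composite unit as $((\Kappa\sigma\Gamma)\eta)_{p}=\Kappa(\sigma_{\Gamma p})\circ\eta_{p}$, and then estimate
\[
  w\bigl(\Kappa(\sigma_{\Gamma p})\circ\eta_{p}\bigr)\le w\bigl(\Kappa(\sigma_{\Gamma p})\bigr)+w(\eta_{p})\le w(\sigma_{\Gamma p})+w(\eta_{p})\le W(\sigma)+W(\eta),
\]
using subadditivity of weights (Definition~\ref{defn:weighted-category}(2)), non-expansiveness of $\Kappa$, and the definitions of $W(\sigma)$ and $W(\eta)$. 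Taking the supremum over $p$ gives $W\bigl((\Kappa\sigma\Gamma)\eta\bigr)\le W(\sigma)+W(\eta)$; the symmetric computation, now invoking non-expansiveness of $\Lambda$ on $((\Lambda\nu\Mu)\tau)_{r}=\Lambda(\nu_{\Mu r})\circ\tau_{r}$, gives $W\bigl((\Lambda\nu\Mu)\tau\bigr)\le W(\nu)+W(\tau)$. Combining these with the elementary inequality $\max\{a+c,b+d\}\le\max\{a,b\}+\max\{c,d\}$ for nonnegative extended reals yields
\[
  \omega\bigl((\Lambda,\Mu,\sigma,\tau)*(\Gamma,\Kappa,\eta,\nu)\bigr)\le\tfrac{1}{2}\bigl(\max\{W(\eta),W(\nu)\}+\max\{W(\sigma),W(\tau)\}\bigr)=\omega(\Gamma,\Kappa,\eta,\nu)+\omega(\Lambda,\Mu,\sigma,\tau),
\]
which is exactly the triangle inequality for $\omega$.

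I do not expect a serious obstacle. The only points that require care are the bookkeeping of the whiskered and horizontally composed natural transformations — in particular correctly identifying the components $((\Kappa\sigma\Gamma)\eta)_{p}$ and $((\Lambda\nu\Mu)\tau)_{r}$ from the formula for $*$ — and remembering that non-expansiveness of the functors $\Gamma,\Kappa,\Lambda,\Mu$, which is precisely what is built into the definition of $\cat{wFut}$ (as opposed to $\cat{Fut}$), is exactly what makes the bounds $w(\Kappa(\sigma_{\Gamma p}))\le w(\sigma_{\Gamma p})$ and $w(\Lambda(\nu_{\Mu r}))\le w(\nu_{\Mu r})$ valid. Everything else is routine.
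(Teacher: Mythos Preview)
Your proposal is correct and follows essentially the same approach as the paper: both verify the identity condition by noting that the identity future equivalence has identity natural transformations (hence weight zero), and both establish subadditivity by bounding $W((\Kappa\sigma\Gamma)\eta)\le W(\sigma)+W(\eta)$ and $W((\Lambda\nu\Mu)\tau)\le W(\nu)+W(\tau)$ via non-expansiveness of the functors and subadditivity of the weights. You are slightly more explicit than the paper in spelling out the final $\max$ inequality, but the argument is otherwise identical.
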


\begin{proof}
The identity future equivalence for any object consists entirely of identity morphisms, and therefore has weight zero.

Recall (Section~\ref{sec:fut-2-cat}) that the composite of ``morphisms'' $(\Gamma, \Kappa, \eta, \nu)$ and $(\Lambda, \Mu, \sigma, \tau)$ is $(\Lambda\Gamma, \Kappa\Mu, (\Kappa\sigma\Gamma)\eta, (\Lambda\nu\Mu)\tau)$.

For all $p \in P$, $w(\Kappa\sigma\Gamma)_{p} = w(\Kappa(\sigma_{\Gamma(p)})) \leq w(\sigma_{\Gamma(p)}) \leq W(\sigma)$, since functors are non-expansive.  
Therefore
\[
	w((\Kappa\sigma\Gamma)_{p}\eta_{p}) 
	\leq W(\sigma) + w(\eta_{p}) \leq W(\sigma) + W(\eta).
\]
It follows that $W((\Kappa\sigma\Gamma)\eta) \leq W(\sigma) + W(\eta)$.

Similarly, $W((\Lambda\nu\Mu)\tau) \leq W(\nu) + W(\tau)$. 
We conclude that $\omega$ is sub-additive on composites, and so $\cat{wFut}$ is a weighted category.
\end{proof}

This weighted category has an associated Lawvere metric space. Call the corresponding Lawvere metric the \emph{future equivalence distance}. That is,

\begin{defn}
  Let $P,Q$ be weighted categories. The \emph{future equivalence distance} between $P$ and $Q$ is given by 
  \begin{equation*}
    d_{Fut}(P,Q) = \inf \{
 w(\Gamma,\Kappa,\mu,\nu) \ | \
(\Gamma,\Kappa,\mu,\nu) \in \cat{wFut}(P,Q) \}
  \end{equation*}
\end{defn}

\begin{thm} \label{thm:fut-lawvere}
  The class of weighted categories with the future equivalence distance is a Lawvere metric space.
\end{thm}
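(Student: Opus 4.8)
The statement to prove is Theorem~\ref{thm:fut-lawvere}: the class of weighted categories equipped with $d_{Fut}$ forms a Lawvere metric space. By the construction in Section~\ref{sec:wcat-to-lawvere}, any weighted category gives rise to an associated Lawvere metric space on its class of objects, with distance obtained as the infimum of weights of morphisms. So the entire content of the theorem is simply that $d_{Fut}$ is precisely the Lawvere metric induced by the weighted category structure on the 1-skeleton of $\cat{wFut}$ established in the preceding proposition. The plan is therefore to invoke that proposition and the general fact that ``weighted category $\Rightarrow$ induced Lawvere metric space'' from Section~\ref{sec:wcat-to-lawvere}, and then just check the two axioms (reflexivity and the triangle inequality) explicitly, tracing through the definitions, exactly as was done for the Gromov-Hausdorff distance in Theorem~\ref{thm:gh-lawvere} (whose proof is similarly a one-liner: ``this weighted category has an associated Lawvere metric space'').

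Concretely, first I would note that $d_{Fut}(P,P) = 0$: the identity future equivalence $(\Iota_P,\Iota_P,\iota_{\Iota_P},\iota_{\Iota_P})$ lies in $\cat{wFut}(P,P)$ and consists entirely of identity morphisms, each of weight $0$, so $W(\iota_{\Iota_P}) = 0$ and hence $\omega$ of this future equivalence is $0$; taking the infimum gives $d_{Fut}(P,P) = 0$. Second, for the triangle inequality, let $P,Q,R$ be weighted categories and fix $\eps > 0$. Choose $(\Gamma,\Kappa,\eta,\nu) \in \cat{wFut}(P,Q)$ with $\omega(\Gamma,\Kappa,\eta,\nu) < d_{Fut}(P,Q) + \eps/2$ and $(\Lambda,\Mu,\sigma,\tau) \in \cat{wFut}(Q,R)$ with $\omega(\Lambda,\Mu,\sigma,\tau) < d_{Fut}(Q,R) + \eps/2$. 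Their horizontal composite $(\Lambda\Gamma,\Kappa\Mu,(\Kappa\sigma\Gamma)\eta,(\Lambda\nu\Mu)\tau)$ lies in $\cat{wFut}(P,R)$, and by the sub-additivity computation already carried out in the proof of the preceding proposition, $\omega$ of this composite is at most $\omega(\Gamma,\Kappa,\eta,\nu) + \omega(\Lambda,\Mu,\sigma,\tau) < d_{Fut}(P,Q) + d_{Fut}(Q,R) + \eps$. Hence $d_{Fut}(P,R) < d_{Fut}(P,Q) + d_{Fut}(Q,R) + \eps$, and letting $\eps \to 0$ gives the triangle inequality.

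There is essentially no hard part: all the real work — verifying that $\omega$ is well-defined, that composites of future equivalences of weighted categories are again such, and that $\omega$ is sub-additive on composites — has already been done in the proposition immediately preceding the theorem. The only mild subtlety worth a sentence is making sure the infimum in the definition of $d_{Fut}$ interacts correctly with the factor of $\tfrac12$ and the $\max$ appearing in $\omega$; but since sub-additivity of $\omega$ on composites was proved at the level of $\omega$ itself, this passes transparently to the infimum. So the proof is short:

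\begin{proof}
As observed in Section~\ref{sec:wcat-to-lawvere}, a weighted category determines a Lawvere metric space on its class of objects, with distance given by the infimum of the weights of morphisms. Applying this to the weighted category structure on the $1$-skeleton of $\cat{wFut}$ (established in the preceding proposition), whose objects are the small weighted categories and whose morphisms $P \to Q$ are the future equivalences in $\cat{wFut}(P,Q)$ weighted by $\omega$, the induced Lawvere metric is exactly $d_{Fut}$.

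Explicitly: for any weighted category $P$, the identity future equivalence $(\Iota_P,\Iota_P,\iota_{\Iota_P},\iota_{\Iota_P})$ consists of identity morphisms, so $\omega(\Iota_P,\Iota_P,\iota_{\Iota_P},\iota_{\Iota_P}) = 0$, whence $d_{Fut}(P,P) = 0$. For the triangle inequality, let $P,Q,R$ be weighted categories and let $\eps > 0$. Pick $(\Gamma,\Kappa,\eta,\nu) \in \cat{wFut}(P,Q)$ with $\omega(\Gamma,\Kappa,\eta,\nu) < d_{Fut}(P,Q) + \tfrac{\eps}{2}$ and $(\Lambda,\Mu,\sigma,\tau) \in \cat{wFut}(Q,R)$ with $\omega(\Lambda,\Mu,\sigma,\tau) < d_{Fut}(Q,R) + \tfrac{\eps}{2}$. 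The horizontal composite $(\Lambda\Gamma,\Kappa\Mu,(\Kappa\sigma\Gamma)\eta,(\Lambda\nu\Mu)\tau)$ lies in $\cat{wFut}(P,R)$, and by the sub-additivity of $\omega$ on composites,
\[
  d_{Fut}(P,R) \leq \omega(\Lambda\Gamma,\Kappa\Mu,(\Kappa\sigma\Gamma)\eta,(\Lambda\nu\Mu)\tau) \leq \omega(\Gamma,\Kappa,\eta,\nu) + \omega(\Lambda,\Mu,\sigma,\tau) < d_{Fut}(P,Q) + d_{Fut}(Q,R) + \eps.
\]
Since $\eps > 0$ was arbitrary, $d_{Fut}(P,R) \leq d_{Fut}(P,Q) + d_{Fut}(Q,R)$.
\end{proof}
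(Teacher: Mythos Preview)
Your proposal is correct and follows exactly the approach the paper intends: the theorem is stated in the paper without proof, since it is an immediate consequence of the preceding proposition (that $\omega$ gives a weighted category structure on the $1$-skeleton of $\cat{wFut}$) together with the general construction of Section~\ref{sec:wcat-to-lawvere}. Your explicit verification of reflexivity and the triangle inequality simply unpacks that one-line deduction, just as you anticipated by analogy with Theorem~\ref{thm:gh-lawvere}.
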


\begin{prop}
The functor $\Phi : \cat{Fut}(P,Q) \rightarrow \cat{Emb}(P,Q)$ of Section~\ref{sec:Phi-functor} restricts to define a weight-preserving functor $\Phi : \cat{wFut}(P,Q) \rightarrow (\cat{wEmb}(P,Q), \tilde{w}_{GH})$.
\end{prop}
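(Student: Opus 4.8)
The plan is to follow the template used for $\cat{wEmb}$ itself: define a weighting on the category $\Phi(\Gamma,\Kappa,\eta,\nu)=I_{\Gamma,\Kappa}$, verify this makes $\Phi$ land in $\cat{wEmb}(P,Q)$, and then identify $\tilde w_{GH}$ of the output with $\omega(\Gamma,\Kappa,\eta,\nu)$; functoriality of $\Phi$ is already supplied by Theorem~\ref{thm:phi-full-faithful} (and Section~\ref{sec:Phi-functor}), so the new content is entirely about weights. Since we are in $\cat{wFut}(P,Q)$, the functors $\Gamma$ and $\Kappa$ are nonexpansive. Writing $\omega=\omega(\Gamma,\Kappa,\eta,\nu)=\tfrac12\max\{W(\eta),W(\nu)\}$, I would weight $I_{\Gamma,\Kappa}$ as follows: it restricts to $w_P$ on the copy of $P$ and to $w_Q$ on the copy of $Q$, so that $P,Q\hookrightarrow I_{\Gamma,\Kappa}$ are weighted embeddings by construction; a crossover $f:p\to q$, with corresponding $\bar f:\Gamma p\to q$ in $Q$, is assigned weight $w_Q(\bar f)+\omega$; and symmetrically a crossover $g:q\to p$, with $\bar g:\Kappa q\to p$, is assigned $w_P(\bar g)+\omega$. (On $\cat{R}\hookrightarrow\cat{I_{\eps}}\hookleftarrow\cat{R}$, where $\omega=\eps$, this gives back the weight $b-a$ of the crossover $(a,0)\le_{\eps}(b,1)$.)

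The first thing to verify is that this really is a weighted category, which is what makes $\Phi$ well defined as a map of objects into $\cat{wEmb}(P,Q)$. Identity arrows lie in $P$ or $Q$, hence have weight $0$; composites internal to $P$ or to $Q$ are handled by the embeddings. A composite with a single crossing, say $p\xrightarrow{f}q\xrightarrow{g}q'$ (so $\overline{g\circ f}=g\circ\bar f$) or $p\xrightarrow{f}p'\xrightarrow{g}q$ (so $\overline{g\circ f}=\bar g\circ\Gamma f$), is bounded using the composition rules of Section~\ref{sec:Phi-functor} together with nonexpansiveness of $\Gamma$. A composite with a double crossing $p\xrightarrow{f}q\xrightarrow{g}p'$ lands back in $P$, with $g\circ f=\bar g\circ\Kappa\bar f\circ\eta_p$, so $w(g\circ f)\le w_P(\bar g)+w_P(\Kappa\bar f)+w_P(\eta_p)\le w_P(\bar g)+w_Q(\bar f)+W(\eta)\le w_P(\bar g)+w_Q(\bar f)+2\omega=w(g)+w(f)$; the mirror case $q\to p\to q'$ is identical using $W(\nu)\le2\omega$, and the remaining cases are symmetric. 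Hence $I_{\Gamma,\Kappa}\in\cat{wCat}$, so $\Phi$ restricts to a well-defined map $\cat{wFut}(P,Q)\to\cat{wEmb}(P,Q)$ on objects; combined with Theorem~\ref{thm:phi-full-faithful} and the fact that $\Phi$ on morphisms is built by pulling back crossover hom-sets along components of $\alpha$ and $\beta$ (which does not touch the weights on the $P$- and $Q$-parts), this gives the functor.

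The crux is to show $\tilde w_{GH}(\Phi(\Gamma,\Kappa,\eta,\nu))=\omega$, i.e. $\tilde d_H^{I_{\Gamma,\Kappa}}(P,Q)=\omega$. From the weighting one reads off that the induced Lawvere metric on $I_{\Gamma,\Kappa}$ satisfies $d(p,q)=\omega+d_Q(\Gamma p,q)$ and $d(q,p)=\omega+d_P(\Kappa q,p)$. The basic crossovers $i_p:p\to\Gamma p$ and $j_q:q\to\Kappa q$ (those with $\overline{i_p}=1_{\Gamma p}$, $\overline{j_q}=1_{\Kappa q}$) have weight exactly $\omega$, which immediately gives $\sup_p\inf_q d(p,q)=\omega$ and $\sup_q\inf_p d(q,p)=\omega$; and since every crossover weighs $\ge\omega$ and every morphism between the two copies is a crossover, none of the four suprema-of-infima can drop below $\omega$. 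The subtle part is the two remaining suprema — $\sup_q\inf_p d(p,q)=\omega+\sup_q\inf_p d_Q(\Gamma p,q)$ and its mirror — which involve the non-symmetry of the Lawvere metric; here one has to use the natural transformations $\nu$ and $\eta$, together with the coherence conditions $\Gamma\eta=\nu\Gamma$ and $\Kappa\nu=\eta\Kappa$, to see that in the relevant direction the image of $\Gamma$ stays $\omega$-close to every object of $Q$ (and dually for $\Kappa$), so these terms also reduce to $\omega$. Granting this, all four are equal to $\omega$, whence $d_H^{I_{\Gamma,\Kappa}}(P,Q)=d_H^{I_{\Gamma,\Kappa}}(Q,P)=\omega$ and therefore $\tilde d_H^{I_{\Gamma,\Kappa}}(P,Q)=\omega=\omega(\Gamma,\Kappa,\eta,\nu)$, which is the asserted weight-preservation.

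I expect this last step — pinning the symmetric Hausdorff distance to exactly $\omega$ and not something larger — to be the main obstacle, precisely because of the asymmetry of the Lawvere metric: the direction ``every object lies within $\omega$ of the other copy'' is witnessed on the nose by the basic crossovers $i_p$ and $j_q$, but the reverse direction is the one that genuinely needs the future-equivalence structure ($\eta$, $\nu$, and the coherences) to control distances from an arbitrary object back into the image of $\Gamma$, respectively $\Kappa$. Once the object-level weight identity is in hand, the only remaining work is bookkeeping: checking that $\Phi$'s action on crossover morphisms — pullback along the components of $\alpha$ and $\beta$, in the notation of Section~\ref{sec:Phi-functor}, with the equations \eqref{eq:morph-cond} in force — is compatible with the weightings just defined, so that each $\Phi(\alpha,\beta)$ is a morphism of $\cat{wEmb}(P,Q)$.
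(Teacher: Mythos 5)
Your construction of the weighting on $I_{\Gamma,\Kappa}$ --- restricting $w_I$ to $w_P$ and $w_Q$ on the two copies and setting $w_I(f)=w_Q(\bar f)+\omega$, $w_I(g)=w_P(\bar g)+\omega$ on crossovers --- together with the case-by-case subadditivity check (single crossing via nonexpansiveness of $\Gamma$, double crossing via $w_P(\eta_p)\le W(\eta)\le 2\omega$) is exactly the paper's proof; in fact that is \emph{all} the paper's written proof contains. The argument there stops once $I_{\Gamma,\Kappa}$ is shown to be a weighted category with $P$ and $Q$ embedded as weighted categories, and the identification of $\tilde w_{GH}(\Phi(\Gamma,\Kappa,\eta,\nu))$ with $\omega(\Gamma,\Kappa,\eta,\nu)$ is not addressed. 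So the rigorous portion of your proposal coincides with the paper, and the step you flag as ``the main obstacle'' is precisely the step the paper omits.

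That obstacle is a genuine gap, and the mechanism you propose for closing it cannot work. You need $\sup_{q}\inf_{p} d_Q(\Gamma p,q)=0$ (and its mirror), i.e.\ that every $q$ receives a cheap morphism \emph{from} some object $\Gamma p$. But the future-equivalence data only supplies $\nu_q\colon q\to\Gamma\Kappa q$, which controls $d_Q(q,\Gamma\Kappa q)$ --- the opposite direction in the non-symmetric Lawvere metric --- and the coherences $\Gamma\eta=\nu\Gamma$, $\Kappa\nu=\eta\Kappa$ do not reverse it. Concretely, let $P$ be the one-object, one-morphism category, let $Q$ have objects $q_0,q_1$ with a single non-identity morphism $m\colon q_1\to q_0$ of weight $5$, and take $\Gamma(\ast)=q_0$, $\Kappa$ the unique functor to $P$, $\eta=\mathrm{id}$, $\nu_{q_0}=1_{q_0}$, $\nu_{q_1}=m$. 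This is a weighted future equivalence with $\omega=5/2$, yet $I_{\Gamma,\Kappa}(\ast,q_1)\cong Q(q_0,q_1)=\varnothing$, so $d(\ast,q_1)=\infty$ and $\tilde d_H^{I_{\Gamma,\Kappa}}(P,Q)=\infty$. Hence not only the asserted equality $\tilde d_H^{I_{\Gamma,\Kappa}}(P,Q)=\omega$ but even the inequality $\le\omega$ (which is what Theorem~\ref{thm:gh-vs-fut} actually requires) fails for general weighted future equivalences; this step needs either an additional hypothesis or a modified construction, and it cannot be extracted from $\eta$, $\nu$ and the coherence conditions alone. The morphism-level ``bookkeeping'' you defer (nonexpansiveness of $\Phi(\alpha,\beta)\colon I'\to I$, which compares $\omega'$ against $\omega+w_Q(\alpha_p)$) has nontrivial content for the same reason.
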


\begin{proof}
	Given $(\Gamma,\Kappa, \eta, \nu) \in \cat{Fut}(P,Q)$, we need to show that $I_{\Gamma,\Kappa}$ is a weighted category in such a way that $P \rightarrow I_{\Gamma,\Kappa} \leftarrow Q$ becomes a diagram in $\cat{wCat}$.
	
	Set $\omega = \omega(\Gamma,\Kappa,\eta,\nu)$.
	Define weight functions by $w_{I}|_{P} = w_{P}$ and $w_{I}|_{Q} = w_{Q}$.
	For $f : p \rightarrow q$, recall that $\bar{f} : \Gamma(p) \rightarrow q$ is the image of $f$ in $Q(\Gamma(p),q)$.
	Similarly, $\bar{g} : \Kappa(q) \rightarrow p$ is the image of $g : q \rightarrow p'$ in $P(\Kappa(q),p)$.
	We set
	\[
		w_{I}(f) = w_{Q}(\bar{f}) + \omega
	\]
	and 
	\[
		w_{I}(g) = w_{P}(\bar{g}) + \omega.
	\]
	We now show that composition is sub-additive with respect to weights.
	If $h : p' \rightarrow p$, then 
	\begin{eqnarray*}
		w_{I}(f \circ h) 
			& = & w_{Q}(\bar{f} \Gamma(h))	\\
			& \leq & w_{Q}(\bar{f}) + w_{Q}(\Gamma(h)) \\
			& \leq & w_{Q}(\bar{f}) + w_{P}(h) \\
			& \leq & w_{I}(f) + w_{I}(h)
	\end{eqnarray*}
	since $\omega \geq 0$.
	Similarly, if $j : q \rightarrow q'$, $k : q' \rightarrow q$, and $\ell : p' \rightarrow p''$, then $w_{I}$ is subadditive on the composites $j \circ f$, $g \circ k$, and $\ell \circ g$.
	
	The only case that remains (up to symmetry) is the composite $g \circ f$.
	\begin{eqnarray*}
		w_{I}(g \circ f) 
			& = & w_{P}(\bar{g}\Kappa(f)\eta_{p}) \\
			& \leq & w_{P}(\bar{g}) + w_{P}(\Kappa(f)) + w_{P}(\eta_{p}) \\
			& \leq & w_{P}(\bar{g}) + w_{Q}(f) + 2\omega \\
			& = & w_{I}(g) + w_{I}(f)
	\end{eqnarray*}
	as desired.
\end{proof}

\begin{thm} \label{thm:gh-vs-fut}
  $d_{GH}(P,Q) \leq \tilde{d}_{GH}(P,Q) \leq d_{Fut}(P,Q)$.
\end{thm}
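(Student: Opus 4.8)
The plan is to split the claim into its two inequalities, since essentially all the content sits in the second one.

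The first inequality, $d_{GH}(P,Q) \leq \tilde{d}_{GH}(P,Q)$, is immediate and was already recorded as a remark above: for any pairwise embedding $P \hookrightarrow I \hookleftarrow Q$ of weighted categories one has $d_H^I(P,Q) \leq \tilde{d}_H^I(P,Q)$ by definition of the symmetric Hausdorff distance, so taking the infimum over all $I \in \cat{wEmb}(P,Q)$ gives the claim.

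For the second inequality, $\tilde{d}_{GH}(P,Q) \leq d_{Fut}(P,Q)$, I would route each weighted future equivalence through the functor $\Phi$ supplied by the preceding proposition and compare weights. If $\cat{wFut}(P,Q)$ is empty, then $d_{Fut}(P,Q) = \inf\emptyset = \infty$ and there is nothing to prove. Otherwise, fix $(\Gamma,\Kappa,\eta,\nu) \in \cat{wFut}(P,Q)$ and write $\omega = \omega(\Gamma,\Kappa,\eta,\nu)$. The preceding proposition equips $I_{\Gamma,\Kappa}$ (Section~\ref{sec:Phi-functor}) with a weighting making $P \hookrightarrow I_{\Gamma,\Kappa} \hookleftarrow Q$ an object of $\cat{wEmb}(P,Q)$ and asserts that $\Phi$ is weight-preserving into $(\cat{wEmb}(P,Q),\tilde{w}_{GH})$, so that $\tilde{d}_H^{I_{\Gamma,\Kappa}}(P,Q) = \omega$. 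For the estimate one needs only the inequality $\tilde{d}_H^{I_{\Gamma,\Kappa}}(P,Q) \leq \omega$, which is visible directly from the weighting: the crossover arrow $i_p : p \to \Gamma(p)$ with $\bar{i}_p = 1_{\Gamma(p)}$ has $w_{I_{\Gamma,\Kappa}}(i_p) = w_Q(1_{\Gamma(p)}) + \omega = \omega$, hence $\inf_{q\in Q} d_{I_{\Gamma,\Kappa}}(p,q) \leq \omega$ for every $p$, and symmetrically $\inf_{p\in P} d_{I_{\Gamma,\Kappa}}(q,p) \leq \omega$ for every $q$ via $j_q : q \to \Kappa(q)$. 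Therefore
\[
  \tilde{d}_{GH}(P,Q) = \inf_{I \in \cat{wEmb}(P,Q)} \tilde{d}_H^I(P,Q) \leq \tilde{d}_H^{I_{\Gamma,\Kappa}}(P,Q) \leq \omega(\Gamma,\Kappa,\eta,\nu),
\]
and taking the infimum of the right-hand side over all $(\Gamma,\Kappa,\eta,\nu) \in \cat{wFut}(P,Q)$ yields $\tilde{d}_{GH}(P,Q) \leq d_{Fut}(P,Q)$.

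I do not expect a real obstacle here: the genuine work --- constructing $I_{\Gamma,\Kappa}$ and verifying that $\Phi$ preserves weights --- has already been carried out. The only points needing a touch of care are the degenerate case in which $\cat{wFut}(P,Q)$ is empty and the possibility that $\omega$ (hence $W(\eta)$ or $W(\nu)$) is infinite, both disposed of trivially, and the discipline of invoking the symmetric Hausdorff and Gromov-Hausdorff distances throughout, so that the chain of comparisons terminates at $\tilde{d}_{GH}$ rather than at $d_{GH}$.
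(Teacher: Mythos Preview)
Your main line is correct and matches the paper: the theorem is placed immediately after the proposition asserting that $\Phi:\cat{wFut}(P,Q)\to(\cat{wEmb}(P,Q),\tilde w_{GH})$ is weight-preserving, and both the paper (implicitly) and you deduce $\tilde d_{GH}\le d_{Fut}$ by pushing each weighted future equivalence through $\Phi$ and taking infima; the first inequality is the earlier remark.

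One caution about your supplementary ``direct'' verification that $\tilde d_H^{I_{\Gamma,\Kappa}}(P,Q)\le\omega$. The two estimates you supply, $\inf_{q} d(p,q)\le\omega$ via $i_p$ and $\inf_{p} d(q,p)\le\omega$ via $j_q$, control only two of the four suprema that enter $\tilde d_H=\max(d_H(P,Q),d_H(Q,P))$. By Definition~\ref{def:hausdorff-lawvere} one also needs $\sup_{q}\inf_{p} d(p,q)\le\omega$ and $\sup_{p}\inf_{q} d(q,p)\le\omega$, i.e.\ for each $q$ a light arrow from some $p$ \emph{into} $q$, and dually; the arrows $i_p$ and $j_q$ point the wrong way for this. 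These extra bounds can genuinely fail for $I_{\Gamma,\Kappa}$ (take $P$ a single point, $Q$ a two-element chain with the nontrivial arrow of positive weight, and $\Gamma$ hitting the top element: then there is no arrow in $I_{\Gamma,\Kappa}$ from $P$ to the bottom element of $Q$ at all). So the direct check, taken on its own, does not establish the inequality; you should rest the argument on the proposition as stated rather than on this shortcut.
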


\subsection{Interleaving distance}

Weighted future equivalences give us the following distance between functors from weighted categories to the same codomain.

\begin{defn}
  Let $C$ be a category. Let $P,Q \in \cat{wCat}$. Let $F:P \to C$ and $G:Q \to C$.
We define the \emph{future equivalence distance} between $F$ and $G$ by
  \[d_{Fut}(F,G) = \inf \{ w(\Gamma,\Kappa,\mu,\nu) \ | \ (\Gamma,\Kappa,\mu,\nu) \in \cat{wFut}(P,Q), F, G \text{ are } (\Gamma,\Kappa)\text{-interleaved} \}
\]
\end{defn}

\begin{thm} \label{thm:fut-functor-lawvere}
  The class of functors on weighted categories with common codomain with the future equivalence distance is a Lawvere metric space.
\end{thm}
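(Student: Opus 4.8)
The plan is to verify the two axioms of a Lawvere metric space (Definition~\ref{defn:lawvere}) for $d_{Fut}(-,-)$: that every functor is at distance zero from itself, and the triangle inequality. Symmetry is not asserted, since future equivalences are directional.

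Reflexivity is immediate: for $F:P\to\cat{C}$, the identity future equivalence $(\Iota_P,\Iota_P,\iota_{\Iota_P},\iota_{\Iota_P})\in\cat{wFut}(P,P)$ has weight $0$, and taking $\varphi=\psi=\iota_F$ shows that $F$ and $F$ are $(\Iota_P,\Iota_P)$-interleaved, the two coherence equations both reading $\iota_F\circ\iota_F=\iota_F$. Hence $d_{Fut}(F,F)=0$.

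For the triangle inequality, the substantive step is a composition lemma for interleaved functors: if $F:P\to\cat{C}$ and $G:Q\to\cat{C}$ are $(\Gamma,\Kappa,\eta,\nu)$-interleaved and $G$ and $H:R\to\cat{C}$ are $(\Lambda,\Mu,\sigma,\tau)$-interleaved, then $F$ and $H$ are interleaved with respect to the horizontal composite $(\Lambda,\Mu,\sigma,\tau)*(\Gamma,\Kappa,\eta,\nu)=(\Lambda\Gamma,\Kappa\Mu,(\Kappa\sigma\Gamma)\eta,(\Lambda\nu\Mu)\tau)$ of Section~\ref{sec:fut-2-cat}. Given interleaving data $\varphi:F\Rightarrow G\Gamma$, $\psi:G\Rightarrow F\Kappa$ and $\varphi':G\Rightarrow H\Lambda$, $\psi':H\Rightarrow G\Mu$, I would build the composite interleaving by whiskering,
\[
\tilde\varphi=\varphi'_{\Gamma}\circ\varphi:F\Rightarrow H\Lambda\Gamma,
\qquad
\tilde\psi=\psi_{\Mu}\circ\psi':H\Rightarrow F\Kappa\Mu,
\]
and then verify the two coherence equations $\tilde\psi_{\Lambda\Gamma}\,\tilde\varphi=F\bigl((\Kappa\sigma\Gamma)\eta\bigr)$ and $\tilde\varphi_{\Kappa\Mu}\,\tilde\psi=H\bigl((\Lambda\nu\Mu)\tau\bigr)$. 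Expanding the first left-hand side via the whiskering formulas and grouping the two middle factors, the coherence identity $\psi'_{\Lambda}\varphi'=G\sigma$ of the second interleaving collapses it to $\psi_{\Mu\Lambda\Gamma}\circ(G\sigma)_{\Gamma}\circ\varphi$; comparing with $F\bigl((\Kappa\sigma\Gamma)\eta\bigr)=F(\Kappa\sigma\Gamma)\circ\psi_{\Gamma}\circ\varphi$, where $F\eta=\psi_{\Gamma}\varphi$ has been used, reduces the task to the single equality $\psi_{\Mu\Lambda}\circ(G\sigma)=(F\Kappa\sigma)\circ\psi$ --- which is exactly naturality of $\psi:G\Rightarrow F\Kappa$, i.e.\ the interchange law applied to $\sigma:\Iota_Q\Rightarrow\Mu\Lambda$. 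The second coherence equation is handled symmetrically, using $\varphi_{\Kappa}\psi=G\nu$, $\varphi'_{\Mu}\psi'=H\tau$ and naturality of $\varphi'$.

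Granting the lemma, I would finish by combining it with the subadditivity of $\omega$ on horizontal composites established in the proof that the $1$-skeleton of $\cat{wFut}$ is a weighted category, namely $\omega\bigl((\Lambda,\Mu,\sigma,\tau)*(\Gamma,\Kappa,\eta,\nu)\bigr)\le\omega(\Gamma,\Kappa,\eta,\nu)+\omega(\Lambda,\Mu,\sigma,\tau)$. Thus any pair of weighted future equivalences witnessing the $F$--$G$ and $G$--$H$ distances composes to one witnessing the $F$--$H$ distance of weight at most the sum of the two; taking the infimum over each family yields $d_{Fut}(F,H)\le d_{Fut}(F,G)+d_{Fut}(G,H)$. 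The main obstacle is the coherence verification inside the composition lemma, a two-dimensional diagram chase; but, as sketched, once $\tilde\varphi$ and $\tilde\psi$ are defined by the right whiskerings each of the two equations reduces to a single application of the interchange law, exactly mirroring Grandis' composition of future equivalences one categorical level up.
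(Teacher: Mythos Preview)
Your proposal is correct and follows the same template as the paper's proof of Theorem~\ref{thm:interleaving-lawvere}: reflexivity via the identity future equivalence, and the triangle inequality via a composition lemma for interleavings combined with the subadditivity of $\omega$ on horizontal composites already established for the $1$-skeleton of $\cat{wFut}$. The paper actually states Theorem~\ref{thm:fut-functor-lawvere} without proof, leaving it to the reader to transport the argument of Theorem~\ref{thm:interleaving-lawvere} to the future-equivalence setting; in particular, the composition lemma you prove (that a $(\Gamma,\Kappa,\eta,\nu)$-interleaving of $F,G$ and a $(\Lambda,\Mu,\sigma,\tau)$-interleaving of $G,H$ yield an interleaving of $F,H$ with respect to the horizontal composite) is never made explicit in the paper, so your sketch supplies genuinely missing detail. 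Your reduction of each coherence equation to a single naturality square for $\psi$ (resp.\ $\varphi'$) is the right computation.

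One minor remark: your aside that ``symmetry is not asserted, since future equivalences are directional'' is slightly misleading---a future equivalence $(\Gamma,\Kappa,\eta,\nu)\in\cat{wFut}(P,Q)$ yields $(\Kappa,\Gamma,\nu,\eta)\in\cat{wFut}(Q,P)$ of the same weight, and the interleaving data $(\varphi,\psi)$ swap roles, so $d_{Fut}$ is in fact symmetric. This does not affect the correctness of your argument, since Lawvere metric spaces do not require symmetry anyway.
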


\begin{thm} \label{thm:interleaving-vs-fut}
  $d(F,G) \leq \tilde{d}(F,G) \leq d_{Fut}(F,G)$.
\end{thm}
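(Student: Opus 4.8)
The first inequality, $d(F,G) \le \tilde d(F,G)$, is immediate and requires no new input: for every weighted pairwise embedding $P \hookrightarrow I \hookleftarrow Q$ one has $d_H^I(P,Q) \le \tilde d_H^I(P,Q)$, and the two infima defining $d(F,G)$ and $\tilde d(F,G)$ range over exactly the same collection of embedding pairs $I$ that admit an extension $H : I \to \cat{C}$ of $F$ and $G$. So the content is the second inequality, $\tilde d(F,G) \le d_{Fut}(F,G)$, and the plan is to prove it one future equivalence at a time, transporting each weighted future equivalence to a weighted embedding pair via the functor $\Phi$.

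Concretely, fix $(\Gamma,\Kappa,\eta,\nu) \in \cat{wFut}(P,Q)$ for which $F$ and $G$ are $(\Gamma,\Kappa)$-interleaved, and let $I = I_{\Gamma,\Kappa} = \Phi(\Gamma,\Kappa,\eta,\nu)$, equipped with the weighting of the proposition preceding Theorem~\ref{thm:gh-vs-fut}; this makes $P \hookrightarrow I \hookleftarrow Q$ an object of $\cat{wEmb}(P,Q)$. First I would invoke Proposition~\ref{prop:fut-emb} to conclude that $F$ and $G$ are $I$-interleaved, so that $I$ is admissible in the infimum defining $\tilde d(F,G)$. Next I would invoke the weight-preservation of $\Phi : \cat{wFut}(P,Q) \to (\cat{wEmb}(P,Q),\tilde w_{GH})$ to get $\tilde d_H^I(P,Q) = \tilde w_{GH}(I) \le w(\Gamma,\Kappa,\eta,\nu)$ (where $w(\Gamma,\Kappa,\eta,\nu) = \tfrac12\max\{W(\eta),W(\nu)\}$ is the weight of the future equivalence). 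Combining these, $\tilde d(F,G) \le \tilde d_H^I(P,Q) \le w(\Gamma,\Kappa,\eta,\nu)$. Taking the infimum over all such $(\Gamma,\Kappa,\eta,\nu)$ yields $\tilde d(F,G) \le d_{Fut}(F,G)$, and together with the first inequality this gives the claim. The symmetric statements follow the identical pattern.

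The genuinely substantive steps are imported from earlier in the paper: that $\Phi$ sends $(\Gamma,\Kappa)$-interleavings to $I_{\Gamma,\Kappa}$-interleavings (Proposition~\ref{prop:fut-emb}), and that the weighting on $I_{\Gamma,\Kappa}$ — crossover morphisms $p \to q$ weighted by $w_Q(\bar f)+\omega$ — is calibrated (via the factor $\tfrac12$ in $\omega$) so that the two isometric copies sit at symmetric Hausdorff distance at most $\omega$. Thus the only obstacle at the level of this theorem is a bookkeeping one: one must make the \emph{same} embedding pair $I$ do double duty, simultaneously receiving the extension $H : I \to \cat{C}$ and having its two images close in the Hausdorff sense; since both properties of $I_{\Gamma,\Kappa}$ are already established, the argument is a routine assembly, formally parallel to the proof of Theorem~\ref{thm:gh-vs-fut} and to the triangle-inequality argument in Theorem~\ref{thm:interleaving-lawvere}.
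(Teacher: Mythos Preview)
Your proposal is correct and matches the paper's intended argument: the paper states Theorem~\ref{thm:interleaving-vs-fut} without proof, treating it as an immediate consequence of Proposition~\ref{prop:fut-emb} together with the weight-preservation of $\Phi : \cat{wFut}(P,Q) \to (\cat{wEmb}(P,Q),\tilde w_{GH})$, which is precisely the assembly you carry out. One small point: since $\Phi$ is asserted to be weight-\emph{preserving}, you in fact get $\tilde d_H^{I}(P,Q) = \omega(\Gamma,\Kappa,\eta,\nu)$ rather than merely $\le$, though of course only the inequality is needed.
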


\begin{thm}
  Let $F:P \to \cat{C}$ and $G:Q \to \cat{C}$. Then
  \begin{equation*}
     d_{Fut}(P,Q) \leq d_{Fut}(F,G).
  \end{equation*}
\end{thm}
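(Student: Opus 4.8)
The plan is to handle this exactly as the Gromov--Hausdorff analogue $d_{GH}(P,Q)\le d(F,G)$ was handled: by unwinding the two definitions and observing that the second infimum ranges over a subset of the index set of the first. Write $w$ for the weight function on $\cat{wFut}(P,Q)$ (defined via the sup-norm on the structure natural transformations of a future equivalence). By definition,
\begin{equation*}
  d_{Fut}(P,Q) = \inf\{\, w(\Gamma,\Kappa,\mu,\nu) \mid (\Gamma,\Kappa,\mu,\nu) \in \cat{wFut}(P,Q) \,\},
\end{equation*}
whereas $d_{Fut}(F,G)$ is the infimum of the \emph{same} function $w$ over the subcollection
\begin{equation*}
  S = \{\, (\Gamma,\Kappa,\mu,\nu) \in \cat{wFut}(P,Q) \mid F \text{ and } G \text{ are } (\Gamma,\Kappa)\text{-interleaved} \,\}.
\end{equation*}

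First I would record the inclusion $S \subseteq \cat{wFut}(P,Q)$, which is immediate from the definition of $d_{Fut}(F,G)$. Since enlarging the index set of an infimum can only decrease it, this gives $\inf_{\cat{wFut}(P,Q)} w \le \inf_{S} w$, that is, $d_{Fut}(P,Q) \le d_{Fut}(F,G)$. I would then dispose of the degenerate cases under the convention $\inf\emptyset = \infty$ in $[0,\infty]$: if $S = \emptyset$ then $d_{Fut}(F,G) = \infty$ and the inequality is vacuous, while if already $\cat{wFut}(P,Q) = \emptyset$ then both sides equal $\infty$.

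There is essentially no obstacle here: the statement ``follows directly from the definitions'', just as for $d_{GH}(P,Q) \le d(F,G)$. The only point worth checking is that the weight appearing in the two definitions is literally the same function $w$ on $\cat{wFut}(P,Q)$ — which holds by construction — so that the comparison is genuinely an infimum of a single function over nested index sets rather than a comparison of two a priori unrelated quantities.
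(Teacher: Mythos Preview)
Your argument is correct and matches the paper's approach: the paper does not even spell out a proof for this statement, treating it (like the analogous inequality $d_{GH}(P,Q)\le d(F,G)$) as immediate from the definitions, since $d_{Fut}(F,G)$ is the infimum of the same weight function $w$ over a subcollection of $\cat{wFut}(P,Q)$. Your handling of the degenerate cases via $\inf\emptyset=\infty$ is appropriate and completes the verification.
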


\begin{thm}[Stability] 
  Let $F:P \to \cat{C}$, $G:Q \to \cat{C}$ and $H:\cat{C} \to \cat{D}$. 
  Then
  \begin{equation*}
    d(HF,HG) \leq d(F,G).
  \end{equation*}
\end{thm}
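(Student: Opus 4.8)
The plan is to mirror the proof of Theorem~\ref{thm:I-stability} (and Theorem~\ref{thm:interleaving-stability}): I will show that a single weighted future equivalence witnessing an interleaving of $F$ and $G$ also witnesses an interleaving of $HF$ and $HG$, with the \emph{same} weight. Since the future equivalence distance between functors is defined as an infimum over such witnesses (Section~\ref{sec:wfut}), the inequality then follows immediately.

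First I would fix $(\Gamma,\Kappa,\eta,\nu) \in \cat{wFut}(P,Q)$ for which $F$ and $G$ are $(\Gamma,\Kappa)$-interleaved; by the definition in Section~\ref{sec:fut-interleaving} this means there are natural transformations $\varphi : F \Rightarrow G\Gamma$ and $\psi : G \Rightarrow F\Kappa$ with $\psi_{\Gamma}\varphi = F\eta$ and $\varphi_{\Kappa}\psi = G\nu$. Whiskering on the left by $H$ yields natural transformations $H\varphi : HF \Rightarrow HG\Gamma$ and $H\psi : HG \Rightarrow HF\Kappa$, whose components at an object $p$ are $H(\varphi_{p})$ and $H(\psi_{p})$. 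To check the first coherence condition, I compute componentwise using functoriality of $H$: for each $p \in P$,
\[
	\big((H\psi)_{\Gamma}(H\varphi)\big)_{p}
	= H(\psi_{\Gamma(p)}) \circ H(\varphi_{p})
	= H\big(\psi_{\Gamma(p)} \circ \varphi_{p}\big)
	= H\big((\psi_{\Gamma}\varphi)_{p}\big)
	= H\big((F\eta)_{p}\big)
	= \big((HF)\eta\big)_{p},
\]
so $(H\psi)_{\Gamma}(H\varphi) = (HF)\eta$; the identity $(H\varphi)_{\Kappa}(H\psi) = (HG)\nu$ is verified in exactly the same way. Hence $HF$ and $HG$ are $(\Gamma,\Kappa)$-interleaved via the very same future equivalence $(\Gamma,\Kappa,\eta,\nu)$, which of course still has weight $\omega(\Gamma,\Kappa,\eta,\nu)$.

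Finally, taking the infimum over all $(\Gamma,\Kappa,\eta,\nu)\in\cat{wFut}(P,Q)$ for which $F$ and $G$ are interleaved — each of which, by the previous paragraph, also witnesses an interleaving of $HF$ and $HG$ of the same weight — gives $d_{Fut}(HF,HG) \leq d_{Fut}(F,G)$, as claimed. There is essentially no obstacle: the only point that requires a moment's care is the standard bookkeeping identity $\big((H\psi)_{\Gamma}\big)_{p} = H\big((\psi_{\Gamma})_{p}\big)$ for whiskered natural transformations, which is immediate from the definition of horizontal composition, together with the observation that left whiskering by $H$ does not affect the weight $\omega$ of the future equivalence (it only acts on the codomain side).
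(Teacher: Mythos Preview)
Your proof is correct and is precisely the argument the paper has in mind: the paper states this theorem without proof, relying on the pattern already established for Theorem~\ref{thm:interleaving-stability} (which in turn is a one-line citation of Theorem~\ref{thm:I-stability}). Your explicit verification that left whiskering by $H$ preserves a $(\Gamma,\Kappa)$-interleaving, together with the observation that $\omega(\Gamma,\Kappa,\eta,\nu)$ is unaffected, is exactly the future-equivalence analogue of Theorem~\ref{thm:I-stability} spelled out in full, and the infimum step is immediate; note also that the ``$d$'' in the statement is evidently a typo for $d_{Fut}$, as you correctly inferred from context.
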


\section{Application} \label{sec:applications}


	
Here we show that a shift equivalence between discrete dynamical systems corresponds precisely to an interleaving between the corresponding functors defining the systems.

\subsection{Shift Equivalences of Dynamical Systems} \label{sec:dynamical}
	
	A \emph{discrete dynamical system} is a topological space $X$ along with a continuous self-map $f : X \rightarrow X$.
	From our categorical point of view, we consider a dynamical system to be a functor $F : N \rightarrow \cat{Top}$, where $N$ is the category with one object $x$ and morphisms $\varphi^{k}$ for $k \geq 0$,  $F(x)=X$ and $F(\varphi)=f$.
	
	\begin{defn}
	Dynamical systems $f : X \rightarrow X$ and $g : Y \rightarrow Y$ are said to be \emph{shift equivalent with lag} $\ell$ if there exist continuous maps $\alpha : X \rightarrow Y$ and $\beta : Y \rightarrow X$ such that $\alpha f = g \alpha$, $\beta g = f \beta$, $\beta \alpha = f^{\ell}$, and $\alpha \beta = g^{\ell}$.
	\end{defn}
	
	It turns out that shift equivalence corresponds precisely to $\cat{Fut}(N,N)$-interleavings of topological spaces.
	Since $N$ has only one object and the morphisms are free on one generator, we can completely characterize its future equivalences.
	
	\begin{prop}
		Every future equivalence from $N$ to itself is of the form $(\Iota_{N},\Iota_{N},\varphi^{\ell},\varphi^{\ell})$ for some $\ell \geq 0$.
	\end{prop}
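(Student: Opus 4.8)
The plan is to exploit the extreme rigidity of $N$: it has a single object $x$, and its endomorphism monoid is the free monoid on one generator $\varphi$, i.e.\ $N(x,x) = \{\varphi^k : k \geq 0\}$ under composition (with $\varphi^0 = 1_x$). Since a functor $N \to N$ is determined by where it sends $x$ (forced: to $x$) and the generating morphism $\varphi$, any endofunctor of $N$ must send $\varphi$ to $\varphi^m$ for some $m \geq 0$. So a future equivalence $(\Gamma, \Kappa, \eta, \nu)$ from $N$ to itself is given by $\Gamma(\varphi) = \varphi^a$ and $\Kappa(\varphi) = \varphi^b$ for some $a, b \geq 0$, while $\eta, \nu$ are natural transformations of endofunctors of $N$, each of which is specified by a single component $\eta_x, \nu_x \in N(x,x)$, say $\eta_x = \varphi^{\ell}$ and $\nu_x = \varphi^m$.

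First I would unwind the naturality of $\eta : \Iota_N \Rightarrow \Kappa\Gamma$. Since $\Kappa\Gamma(\varphi) = \varphi^{ab}$, the naturality square for the morphism $\varphi$ reads $\varphi^{ab} \circ \varphi^{\ell} = \varphi^{\ell} \circ \varphi^1$ in $N(x,x)$, i.e.\ $ab + \ell = \ell + 1$, forcing $ab = 1$; since $a,b \geq 0$ are integers, $a = b = 1$. Hence $\Gamma = \Kappa = \Iota_N$. Symmetrically (or immediately, now that $\Gamma\Kappa = \Iota_N$ as well) the naturality of $\nu$ gives nothing further. Next I would apply the coherence condition $\Gamma\eta = \nu\Gamma$. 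With $\Gamma = \Iota_N$ this says $\eta = \nu$ as natural transformations $\Iota_N \Rightarrow \Iota_N$, hence $\ell = m$; the other coherence condition $\Kappa\nu = \eta\Kappa$ is then automatic. Therefore $(\Gamma,\Kappa,\eta,\nu) = (\Iota_N, \Iota_N, \varphi^{\ell}, \varphi^{\ell})$ for some $\ell \geq 0$, as claimed. Conversely, one checks trivially that any such quadruple satisfies all the axioms, since with both functors the identity the natural-transformation and coherence conditions all reduce to identities in the commutative monoid $(\mathbb{Z}_{\geq 0}, +)$.

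The only slightly subtle point — the ``main obstacle,'' though it is mild — is being careful that $\eta$ is genuinely a natural transformation rather than just a choice of morphism $x \to \Kappa\Gamma(x)$: the naturality constraint on the single generator $\varphi$ is exactly what kills the functors $\Gamma, \Kappa$ down to identities, and one must not skip it. Everything else is bookkeeping in the free monoid on one generator, where composition is addition of exponents, so all the equations become linear equations over $\mathbb{Z}_{\geq 0}$.
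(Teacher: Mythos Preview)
Your proof is correct and follows essentially the same approach as the paper: use naturality of $\eta$ on the generator $\varphi$ to force $ab=1$ and hence $\Gamma=\Kappa=\Iota_N$, then invoke the coherence condition $\Gamma\eta=\nu\Gamma$ to conclude $\eta=\nu$. The only cosmetic difference is that the paper introduces an isomorphic copy $N'$ (with generator $\psi$) to keep the two sides of the future equivalence notationally distinct, whereas you work directly in $N$.
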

	
	\begin{proof}
	For the sake of clarity, let $N'$ be an isomorphic copy of $N$, with single object $y$ and generating morphism $\psi$.
	
	Suppose $(\Gamma,\Kappa,\eta,\nu)$ is a future equivalence between $N$ and $N'$.
	Necessarily, $\Gamma(x)=y$ and $\Gamma(\varphi)=\psi^{i}$ for some $i \geq 0$, while $\Kappa(y)=x$ and $\Kappa(\psi)=\varphi^{j}$ for some $j \geq 0$.
	Since $\Kappa\Gamma(x)=x$, $\eta_{x} = \varphi^{k}$ for some $k \geq 0$, and similarly $\nu_{y}=\psi^{\ell}$ for some $\ell \geq 0$.
	
	Since $\eta$ is a natural transformation and $\Kappa\Gamma(x)=x$, the diagram 
	\[
		\begin{tikzcd}
			x \arrow[r, "\eta_{x}"] \arrow[d, "\varphi"']
				& x \arrow[d, "\Kappa\Gamma(\varphi)"]	\\
			x \ar[r, "\eta_{x}"'] & x
		\end{tikzcd}
	\]
	must commute.  
	Now, $\Kappa(\Gamma(\varphi)) = \Kappa(\psi^{i})=\varphi^{ij}$, so the commutativity of the above diagram implies that $ij+k=1+k$.
	It follows that $ij=1$, so $i=j=1$.  
	In other words, $\Gamma$ and $\Kappa$ are the functors defining the canonical isomorphism $N \cong N'$.
	
	The coherence condition for a future equivalence demands in particular that $\Gamma\eta = \nu\Gamma$.
	This translates in our case to $\Gamma(\varphi^{k}) = (\Gamma(\varphi))^{\ell}$; that is, $\psi^{k}=\psi^{\ell}$.
	Since the morphisms of $N$ are freely generated by $\psi$, it follows that $k = \ell$.
	\end{proof}
	
	\begin{prop}\label{prop:shift-equiv=interleaved}
	Let $F_{\ell} = (\Gamma,\Kappa,\varphi^{\ell},\psi^{\ell})$.
	An $F_{\ell}$-interleaving between dynamical systems $(X,f)$ and $(Y,g)$ is precisely a shift equivalence of lag $\ell$.
	\end{prop}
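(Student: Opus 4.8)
The plan is to unwind the definition of $F_{\ell}$-interleaving (Section~\ref{sec:fut-interleaving}) and match the resulting data, identity by identity, with the definition of shift equivalence. Write $(X,f)$ as the functor $F : N \to \cat{Top}$ and $(Y,g)$ as a functor $G : N' \to \cat{Top}$, where $N'$ is the isomorphic copy with object $y$ and generator $\psi$ used in the previous proposition, and $\Gamma,\Kappa$ are the functors defining the canonical isomorphism $N \cong N'$. Since $\Gamma(x)=y$ and $\Gamma(\varphi)=\psi$, the composite $G\Gamma : N \to \cat{Top}$ is exactly the functor encoding $(Y,g)$; likewise $F\Kappa : N' \to \cat{Top}$ encodes $(X,f)$, and because $\Kappa\Gamma=\Iota_{N}$ and $\Gamma\Kappa=\Iota_{N'}$ we have $F\Kappa\Gamma=F$ and $G\Gamma\Kappa=G$. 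To avoid a clash with the generator $\varphi$ of $N$, I would rename the two natural transformations appearing in the definition of $(\Gamma,\Kappa,\eta,\nu)$-interleaving as $a : F \Rightarrow G\Gamma$ and $b : G \Rightarrow F\Kappa$, so that the coherence conditions read $b_{\Gamma}a = F\eta$ and $a_{\Kappa}b = G\nu$.

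First I would translate the two natural transformations. Since $N$ and $N'$ each have a single object, $a$ amounts to a single continuous map $\alpha = a_{x} : X \to Y$ and $b$ to a single continuous map $\beta = b_{y} : Y \to X$. Naturality of $a$ with respect to the generator $\varphi$ says precisely that the square with horizontal maps $\alpha$ and vertical maps $f$ (source) and $g$ (target) commutes, i.e.\ $g\alpha = \alpha f$; similarly naturality of $b$ gives $f\beta = \beta g$.

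Next I would unwind the two coherence equations. Here $\eta = \varphi^{\ell}$ has single component $\varphi^{\ell}$ at $x$, so $F\eta : F \Rightarrow F\Kappa\Gamma = F$ has single component $F(\varphi^{\ell}) = f^{\ell}$ by functoriality of $F$, regarded as a map $X \to F\Kappa\Gamma(x) = X$. On the other hand the component of $b_{\Gamma}\circ a$ at $x$ is $b_{\Gamma(x)}\circ a_{x} = b_{y}\circ a_{x} = \beta\alpha$. Thus $b_{\Gamma}a = F\eta$ becomes exactly $\beta\alpha = f^{\ell}$. Symmetrically, $\nu = \psi^{\ell}$, $G\nu$ has component $G(\psi^{\ell}) = g^{\ell}$, the component of $a_{\Kappa}\circ b$ at $y$ is $a_{\Kappa(y)}\circ b_{y} = a_{x}\circ b_{y} = \alpha\beta$, and $a_{\Kappa}b = G\nu$ becomes $\alpha\beta = g^{\ell}$.

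Assembling the four identities $\alpha f = g\alpha$, $\beta g = f\beta$, $\beta\alpha = f^{\ell}$, $\alpha\beta = g^{\ell}$, we recover exactly the defining data of a shift equivalence of lag $\ell$; conversely any such $\alpha,\beta$ supply natural transformations $a,b$ satisfying the interleaving and coherence conditions, so the two notions coincide. I do not expect a genuine obstacle: the work is entirely bookkeeping with whiskering of natural transformations over one-object categories, and the only points requiring care are the notational clash noted above and keeping straight in which of the functors $G\Gamma$, $F\Kappa$, $F\Kappa\Gamma$, $G\Gamma\Kappa$ a given component is being computed.
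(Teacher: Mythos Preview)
Your proof is correct, and it is precisely the ``direct'' argument the paper explicitly acknowledges but chooses not to give. The paper instead proves the proposition by passing through the associated embedding category: it computes $I_{\ell} = \Phi(F_{\ell})$ explicitly, showing (via two lemmas) that $I_{\ell}$ is presented by generators $\varphi,\psi,a,b$ subject to $ab=\psi^{\ell}$, $ba=\varphi^{\ell}$, $\psi a = a\varphi$, $\varphi b = b\psi$. With this presentation in hand, an $I_{\ell}$-interleaving---i.e.\ a functor $I_{\ell}\to\cat{Top}$ restricting to $F$ and $G$---is manifestly the same data as a shift equivalence of lag $\ell$, and Proposition~\ref{prop:fut-emb} identifies $I_{\ell}$-interleavings with $F_{\ell}$-interleavings.

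Your route is shorter and entirely self-contained within the future-equivalence framework; the paper's route costs more in setup but has the payoff of exhibiting the interleaving category $I_{\ell}$ concretely, which illustrates the $\Phi$ construction of Section~\ref{sec:Phi-functor} on a nontrivial example and makes the correspondence visible at the level of generators and relations rather than components of natural transformations.
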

	
	We could prove Proposition~\ref{prop:shift-equiv=interleaved} directly, but it is more transparent once we see the form taken by the cospan embedding $I_{\ell}$ corresponding to $F_{\ell}$.
	Recall that $I_{\ell,0} = P_{0} \amalg Q_{0} = \{x,y\}$.
	Furthermore, $I_{\ell}(x,x)=P(x,x)=\{ \varphi^{k} \mid k \geq 0 \}$ and $I_{\ell}(y,y) = Q(y,y) = \{ \psi^{k} \mid k \geq 0 \}$.
	The morphism set $I_{\ell}(x,y) \cong Q(\Gamma(x),y) = Q(y,y)$;  we denote by $a_{i}$ the morphism corresponding to $\psi^{i}$.
	Similarly, $I_{\ell}(y,x) = \{ b_{k} \mid k \geq 0 \}$, where $b_{k}$ corresponds to $\varphi^{k} \in P(x,x)$.
	
	\begin{lem}
		For $i \geq 0$ and $k \geq 0$,
		\begin{enumerate}
			\item $a_{i+k} = \psi^{i} \circ a_{k} = a_{k} \circ \varphi^{i}$. 
			\item $b_{i+k} = b_{k} \circ \psi^{i} = \varphi^{i} \circ b_{k}$.
		\end{enumerate} 
		In particular, $a_{i} = \psi^{i} \circ a_{0} = a_{0} \circ \varphi^{i}$ and $b_{i} = b_{0} \circ \psi^{i} = \varphi^{i} \circ b_{0}$.
	\end{lem}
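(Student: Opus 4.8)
The plan is to unwind the definitions of $a_i$ and $b_k$ and apply the composition rules defining $I_\ell = I_{\Gamma,\Kappa}$ directly; no deep input is required. First I would record, from the preceding proposition, that $\Gamma$ and $\Kappa$ realize the canonical isomorphism $N \cong N'$, so that $\Gamma(\varphi^i) = \psi^i$ and $\Kappa(\psi^i) = \varphi^i$ for all $i \ge 0$. By the construction of Section~\ref{sec:Phi-functor}, $I_\ell(x,y) \cong Q(\Gamma x, y) = Q(y,y) = \{\psi^i \mid i \ge 0\}$ and $I_\ell(y,x) \cong P(\Kappa y, x) = P(x,x) = \{\varphi^k \mid k \ge 0\}$, where these bijections are exactly $a_i \mapsto \overline{a_i} = \psi^i$ and $b_k \mapsto \overline{b_k} = \varphi^k$. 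Since $\overline{(-)}$ is a bijection on each crossover hom-set, it suffices to verify each asserted equality after applying $\overline{(-)}$.

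For statement (1): $\psi^i \circ a_k$ is the composite of $a_k : x \to y$ followed by $\psi^i : y \to y$, a morphism lying entirely in $Q$, so the first composition rule gives $\overline{\psi^i \circ a_k} = \psi^i \circ \overline{a_k} = \psi^i \circ \psi^k = \psi^{i+k} = \overline{a_{i+k}}$. Likewise $a_k \circ \varphi^i$ is the composite of $\varphi^i : x \to x$ (a morphism in $P$) followed by $a_k : x \to y$, so the second composition rule gives $\overline{a_k \circ \varphi^i} = \overline{a_k} \circ \Gamma(\varphi^i) = \psi^k \circ \psi^i = \psi^{i+k} = \overline{a_{i+k}}$. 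Injectivity of $\overline{(-)}$ then yields $a_{i+k} = \psi^i \circ a_k = a_k \circ \varphi^i$.

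Statement (2) follows from the mirror-image argument, using the versions of the composition rules for crossovers starting in $Q$: $\overline{b_k \circ \psi^i} = \overline{b_k} \circ \Kappa(\psi^i) = \varphi^k \circ \varphi^i = \varphi^{i+k}$ and $\overline{\varphi^i \circ b_k} = \varphi^i \circ \overline{b_k} = \varphi^i \circ \varphi^k = \varphi^{i+k}$, so $b_{i+k} = b_k \circ \psi^i = \varphi^i \circ b_k$. The ``in particular'' clauses are simply the case $k = 0$, noting $\overline{a_0} = \psi^0 = 1_y$ and $\overline{b_0} = \varphi^0 = 1_x$.

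The only point that needs care — rather than a genuine obstacle — is bookkeeping: one must select the correct one of the two ``forward'' composition rules in each case and keep the order of composition straight inside the $\overline{(-)}$ formulas (recall $\overline{g \circ f} = g \circ \overline{f}$ when $g$ sits in the target copy, whereas $\overline{g \circ f} = \overline{g} \circ \Gamma(f)$ when $f$ sits in the source copy). Notably, the third, ``crossover'' composition rule involving $\eta = \varphi^\ell$ never enters, since every composite appearing here either stays on one side or passes from the $x$-side to the $y$-side without reversing; this is precisely why the lag $\ell$ does not appear in the statement.
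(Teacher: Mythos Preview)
Your proof is correct and follows essentially the same approach as the paper: both arguments unwind the composition rules of $I_{\Gamma,\Kappa}$ on crossover morphisms, using $\overline{\psi^i \circ a_k} = \psi^i \circ \overline{a_k}$ and $\overline{a_k \circ \varphi^i} = \overline{a_k} \circ \Gamma(\varphi^i)$ together with $\Gamma(\varphi)=\psi$, and then appeal to the bijection $a_j \leftrightarrow \psi^j$. Your added remarks on bookkeeping and why the lag $\ell$ does not appear are extra commentary, not a different argument.
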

	
	\begin{proof}
		 The composite $\psi^{i} \circ a_{k}$ is, by definition of $I_{\ell}$, the composite in $Q$ of the arrows
		 \[
			 y = \Gamma(x) \xrightarrow{\psi^{k}} y \xrightarrow{\psi^{i}} y
		\]
		that is, $\psi^{i+k}$, which corresponds under bijection to $a_{i+k}$.
		
		By definition, the composite
		\[
			x \xrightarrow{\varphi^{i}} x \xrightarrow{a_{k}} y
		\]
		is the composite
		\[
			y = \Gamma(x) \xrightarrow{\Gamma(\varphi^{i})} \Gamma(x) = y
			\xrightarrow{\psi^{k}} y.
		\]
		Since $\Gamma(\varphi)=\psi$, we have that $a_{k} \circ \varphi^{i}$ corresponds to $\psi^{k+i}$, and so equals $a_{k+i}$.
		
		The proof of the statements concerning $b_{j}$ is similar.
	\end{proof}
	
	Let us now simplify notation by setting $a = a_{0}$ and $b = b_{0}$.
	To compute the composites of all ``crossover'' arrows between $x$ and $y$, it suffices by the above Lemma to compute the composites $ab$ and $ba$.
	
	\begin{lem}
		In $I_{\ell}$, $ba = \varphi^{\ell}$ and $ab = \psi^{\ell}$.
	\end{lem}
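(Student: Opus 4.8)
The plan is to unwind the definition of composition for crossover morphisms in $I_\ell = I_{\Gamma,\Kappa}$ from Section~\ref{sec:Phi-functor}, exploiting the special fact that the two generating arrows $a = a_{0}$ and $b = b_{0}$ have identity bar-transposes. Under the identification $I_\ell(x,y) \cong Q(\Gamma(x),y) = Q(y,y)$ the arrow $a$ corresponds to $\psi^{0} = 1_{y}$, so $\bar a = 1_{\Gamma(x)}$; dually, under $I_\ell(y,x) \cong P(\Kappa(y),x) = P(x,x)$ the arrow $b$ corresponds to $\varphi^{0} = 1_{x}$, so $\bar b = 1_{\Kappa(y)}$.

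To compute $ba = b \circ a$, observe that $a : x \to y$ is a crossover from $P = N$ to $Q = N'$ and $b : y \to x$ a crossover back, so the third clause of the definition of composition in $I_{\Gamma,\Kappa}$ applies and presents $b\circ a$ as the composite in $P$
\[
	x \xrightarrow{\eta_{x}} \Kappa\Gamma(x) \xrightarrow{\Kappa(\bar a)} \Kappa(y) \xrightarrow{\bar b} x .
\]
Since $\eta_{x} = \varphi^{\ell}$ by the definition of $F_{\ell}$, since $\Kappa(\bar a) = \Kappa(1_{\Gamma(x)}) = 1_{\Kappa\Gamma(x)}$ because functors preserve identities, and since $\bar b = 1_{\Kappa(y)}$, this composite equals $\varphi^{\ell}$; hence $ba = \varphi^{\ell}$ as an element of $I_\ell(x,x) = P(x,x)$.

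To compute $ab = a \circ b$, the symmetric clause — the one governing a crossover from $Q$ to $P$ followed by a crossover back, in which $\nu$ plays the role of $\eta$ — presents $a\circ b$ as the composite in $Q$
\[
	y \xrightarrow{\nu_{y}} \Gamma\Kappa(y) \xrightarrow{\Gamma(\bar b)} \Gamma(x) \xrightarrow{\bar a} y .
\]
Here $\nu_{y} = \psi^{\ell}$, $\Gamma(\bar b) = \Gamma(1_{\Kappa(y)}) = 1_{\Gamma\Kappa(y)}$, and $\bar a = 1_{\Gamma(x)}$, so the composite collapses to $\psi^{\ell}$; hence $ab = \psi^{\ell}$ as an element of $I_\ell(y,y) = Q(y,y)$.

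There is no genuine obstacle here: in each case the argument is a one-line collapse of a three-arrow composite. The only points requiring care are selecting the correct clause of the definition of composition in $I_{\Gamma,\Kappa}$ (the clause in which $\eta$, respectively $\nu$, appears, since we are composing a $P$-to-$Q$ arrow with a $Q$-to-$P$ arrow) and remembering that $\bar a$ and $\bar b$ are identity morphisms, so that the two maps flanking $\eta_{x}$, respectively $\nu_{y}$, are themselves identities.
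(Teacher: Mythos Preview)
Your proof is correct and follows essentially the same route as the paper's: both unwind the third clause of the composition rule in $I_{\Gamma,\Kappa}$, observe that the two flanking maps are identities (you write $\bar a = 1_{\Gamma(x)}$, $\bar b = 1_{\Kappa(y)}$ where the paper writes $\psi^{0}$, $\varphi^{0}$), and conclude that the composite collapses to $\eta_{x}=\varphi^{\ell}$, respectively $\nu_{y}=\psi^{\ell}$. The only difference is that you spell out the symmetric case for $ab$ explicitly, whereas the paper leaves it as ``similar''.
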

	
	\begin{proof}
		From the definition of composition, the composite
		\[
			x \xrightarrow{a} y \xrightarrow{b} x
		\]
		is the composite in $P$ of
		\[
			x \xrightarrow{\eta_{x}}
			\Kappa\Gamma(x) \xrightarrow{\Kappa(\psi^{0})}
			\Kappa(y) \xrightarrow{\varphi^{0}} x.
		\]
		Since $\eta_{x} = \varphi^{\ell}$, $\varphi^{0} = 1_{x}$ and $\psi^{0} = 1_{y}$, the result follows.
		
		The corresponding result for $ab$ is similar.
	\end{proof}
	
	As a result, $I_{\ell}$ is the category generated by the directed graph 
	\[
		\begin{tikzcd}
			x \arrow[r, bend left, "a"]
			\arrow[loop left, "\varphi"] 
			& y \arrow[l, bend left, "b"]
			\arrow[loop right, "\psi"]
		\end{tikzcd}
	\]
	
	subject to the relations $ab = \psi^{\ell}$, $ba = \varphi^{\ell}$, $\psi a = a \varphi$, $\varphi b = b \psi$.




\appendix

\section{Enriched category theory} \label{sec:enriched-theory}

\subsection{Enriched categories}
\label{sec:enriched}

\cite{kelly:enriched}

Let $(\cat{V},\otimes,I)$ be a \emph{monoidal category}; this means that there is an associative operation on the level of sets and morphisms, $(X,Y) \mapsto X \otimes Y$, $(f,g) \mapsto f \otimes g$, that is coherent up to natural isomorphism, and for which the object $I$ is neutral. See~\cite[pp. 161-170]{maclane} for details.

A $\cat{V}$-\emph{category} $\cat{C}$ consists of a collection $\cat{C}_{0}$ of objects, and for each pair $X,Y \in \cat{C}_{0}$, a \emph{hom object} $\cat{C}(X,Y) \in \cat{V}$.  For every triple $X,Y,Z \in \cat{C}_{0}$, there is a composition morphism in $\cat{V}$,
\[
	\cat{C}(Y,Z) \otimes \cat{C}(X,Y) \xrightarrow{\kappa_{X,Y,Z}} \cat{C}(X,Z).
\]
Composition must be associative.
For every object $X \in \cat{C}_{0}$, there is a unique morphism $\eta_{X} : I \rightarrow \cat{C}(X,X)$, such that for every $Y \in \cat{C}_{0}$, the composites
\[
	I \otimes \cat{C}(Y,X) \xrightarrow{\eta_{X} \otimes 1} \cat{C}(X,X) \otimes \cat{C}(Y,X)
		\xrightarrow{\kappa_{Y,X,X}} \cat{C}(Y,X)
\]
and
\[
	\cat{C}(X,Y) \otimes I \xrightarrow{1 \otimes \eta_{X}} \cat{C}(X,Y) \otimes \cat{C}(X,X)
		\xrightarrow{\kappa_{X,X,Y}} \cat{C}(X,Y)
\]
are the canonical isomorphisms.

Let $\cat{C}$ and $\cat{D}$ be $\cat{V}$-categories.  A $\cat{V}$-\emph{functor} $F : \cat{C} \rightarrow \cat{D}$ consists of a mapping, $F_{0} : \cat{C}_{0} \rightarrow \cat{D}_{0}$, and for each pair $X,Y \in \cat{C}_{0}$, a morphism in $\cat{V}$, $F_{1} : \cat{C}(X,Y) \rightarrow \cat{D}(F(X),F(Y))$,  such that the diagrams
\[
	\begin{tikzcd}
		\cat{C}(Y,Z) \otimes \cat{C}(X,Y) 
			\arrow[r, "{F_{1} \otimes F_{1}}"]
			\arrow[d]
		& \cat{D}(F(Y),F(Z)) \otimes \cat{D}(F(X),F(Y)) 
			\arrow[d] \\
		\cat{C}(X,Z) 
			\arrow[r, "{F_{1}}"'] 
		& \cat{D}(F(X),F(Z))
	\end{tikzcd}
\]
where the vertical arrows are the composition morphisms, and
\[
	\begin{tikzcd}[row sep=tiny]
		& \cat{C}(X,X)	
			\arrow[dd, "{F_{1}}"]	\\
		I 
			\arrow[ur, "{\eta_{X}}"] 
			\arrow[dr, "{\eta_{F(X)}}"']	\\
		& \cat{D}(F(X),F(X))
	\end{tikzcd}
\]
commute.
There is a category $\cat{V}$-$\cat{Cat}$ of all small $\cat{V}$-categories and $\cat{V}$-functors.

A category can be viewed as a category enriched over $\cat{Set}$, the category of sets, and a strict 2-category can be defined as a category enriched over $\cat{Cat}$, the category of small categories. In each of these cases the monoidal structure is given by finite products. A 2-functor is the corresponding enriched functor between strict 2-categories.
Let $\cat{2\text{-}Cat}$ denote the category of all small strict 2-categories and 2-functors.

\subsection{Lawvere metric spaces as enriched categories}
\label{sec:lawvere-enriched}

\cite{lawvere:1973}

A \emph{Lawvere metric space} is a small category enriched in the strict monoidal poset $(([0,\infty],\geq),+,0)$.  Suppose that $\cat{C}$ is a Lawvere metric space.  We will denote $\cat{C}(X,Y)$ by $d(X,Y)$. 
We have that $0 \leq d(X,Y) \leq \infty$ for all $X,Y \in \cat{C}$, so objects can be infinitely distant from one another in a Lawvere metric space.  
The Triangle Inequality is simply the composition morphism $\kappa_{X,Y,Z}: d(Y,Z) + d(X,Y) \geq d(X,Y)$. 
The morphism $\eta_X:0 \geq d(X,X)$ implies that $d(X,X)=0$.
There is no requirement of symmetry in distance, and for $X \neq Y$ we may have $d(X,Y)=0$.
A Lawvere metric space is also called an extended quasipseudometric space.

Let $\cat{C}$ and $\cat{D}$ be Lawvere metric spaces.  A \emph{morphism of Lawvere metric spaces}, $F:\cat{C} \rightarrow \cat{D}$, is an enriched functor.  This means that for $X,Y \in \cat{C}_{0}$, we get a ``morphism'' in $([0,\infty],\geq)$, $d(X,Y) \geq d(F(X),F(Y))$, so $F$ is nonexpansive, i.e. 1-Lipschitz.  We denote by $\cat{Lawv}$ the category of all Lawvere metric spaces, and their morphisms.


\begin{deflemma}
There is a forgetful functor $U : \cat{Lawv} \rightarrow \cat{Cat}$ given by setting $(U\cat{C})_{0} = \cat{C}_{0}$ and, for $x,y \in \cat{C}_{0}$, 
\[
	(U\cat{C})(x,y) = \left\{
		\begin{array}{cl}
			\{*\}	& \text{if }d(x,y) < \infty	\\
			\varnothing	& \text{otherwise.}
		\end{array}
		\right.
\]
\end{deflemma}

\begin{proof}
  First we check that we have a category $U\cat{C}$. The existence of a canonical composition law follows from the triangle inequality. The associativity and identity axioms follow trivially. 

  Let $F: \cat{C} \to \cat{D} \in \cat{Lawv}$. We need to show that there exists a canonical functor $UF:U\cat{C} \to U\cat{D}$. For $x \in U\cat{C}$ define $UF(x) = F(x)$. Let $* \in U\cat{C}(x,y)$. This implies that $d(x,y) < \infty$.
Since $F$ is nonexpansive, $d(Fx,Fy) \leq d(x,y) < \infty$. Thus $U\cat{C}(Fx,Fy)=\{*\}$.
So define $UF(*) = *$.
The identity and composition conditions follow trivially.
\end{proof}

\begin{rmk}
	Note that $([0,\infty],\geq,+,0)$ is in fact \emph{closed} monoidal; namely, the functor $a \mapsto a + b$ has a right adjoint, $c \mapsto c^b$, defined on objects by $c^b = \max(c-b,0)$.
	It is easy to see that $a+b \geq c$ if and only if $a \geq c^b$.
\end{rmk}

\subsection{Weighted categories and enrichments}

A \emph{weighted set} is a set $X$ along with a function $w : X \rightarrow [0,\infty]$.
A \emph{morphism of weighted sets} $f: (X,w) \rightarrow (Y,v)$ is a mapping $f : X \rightarrow Y$ of the underlying sets that satisfies $v(f(x)) \leq w(x)$ for all $x \in X$.
The category of weighted sets and non-expansive maps is denoted by $\cat{wSet}$.
The summing of weights makes $\cat{wSet}$ into a monoidal category under the Cartesian product.
The neutral weighted set $(*,0)$ consists of one element of weight zero.

A category $\cat{C}$ enriched in $\cat{wSet}$ consists of a collection of objects, and for any two objects $x$ and $y$, a weighted set $(\cat{C}(x,y), w_{xy})$.
For any triple of objects $x$, $y$, and $z$, there is a composition morphism (in $\cat{wSet}$),
\[
	(\cat{C}(y,z),w_{yz}) \times (\cat{C}(x,y),w_{xy}) \rightarrow (\cat{C}(x,z),w_{xz}).
\]
In particular, if $f : x \rightarrow y$ and $g : y \rightarrow z$, then $w_{xz}(gf) \leq w_{yz}(g) + w_{xy}(f)$.
Also, for every object $x$, there is a unit morphism of weighted sets, $(*,0) \rightarrow (\cat{C}(x,x), w_{xx})$, $* \mapsto 1_{x}$, that is neutral for composition.  
In particular, $w_{xx}(1_{x}) = 0$.
In short, $\cat{C}$ is precisely a weighted category.

\subsection{From weighted categories to Lawvere metric spaces}
\label{sec:comparison}


The infimum defines a morphism of monoidal categories, $\iota : \cat{wSet} \rightarrow [0,\infty]$.
On objects, $\iota(X) = \inf_{x \in X} w(x)$.  
If $f : X \rightarrow Y$ is a morphism in $\cat{wSet}$, then $w(f(x)) \leq w(x)$ for all $x \in X$, and so 
\[
	\inf_{y \in Y} (w(y)) \leq w(f(x)) \leq w(x)
\]
for all $x \in X$.
It follows by the universal property of the infimum that $\inf_{x \in X}\{ w(x) \} \geq \inf_{y \in Y} \{ w(y) \}$, i.e., $\iota(X) \geq \iota(Y)$.

A straightforward argument shows that $\iota ( X \times Y ) = \iota(X) + \iota(Y)$.

In the other direction, define $\kappa : [0,\infty] \rightarrow \cat{wSet}$ to be, on the level of objects, the function that sends $a$ to the set $\{ *_{a} \}$, where $w(*_{a}) = a$.
If $a \geq b$ then the unique map $\{ *_{a} \} \rightarrow \{ *_{b} \}$ is a morphism in $\cat{wSet}$, so $\kappa$ is a functor.  
Since there is a natural isomorphism of weighted sets $\{ *_{a} \} \times \{ *_{b} \} \cong \{ *_{a+b} \}$, $\kappa$ is monoidal.

\begin{prop}
	The monoidal functor $\kappa$ is right-adjoint to $\iota$.
\end{prop}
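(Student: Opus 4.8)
The plan is to establish the adjunction $\iota \dashv \kappa$ by producing, for each weighted set $X$ and each $a \in [0,\infty]$, a bijection
\[
  \cat{wSet}(X,\kappa(a)) \;\cong\; [0,\infty]\bigl(\iota(X),a\bigr)
\]
natural in $X$ and in $a$. Here the right-hand hom-set, computed in the poset $([0,\infty],\geq)$, consists of a single element when $\iota(X) \geq a$ and is empty otherwise.

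First I would unwind the left-hand side. Since $\kappa(a) = \{*_a\}$ has a single underlying element, there is exactly one function $X \to \kappa(a)$; it is a morphism of weighted sets precisely when $w(*_a) = a \leq w(x)$ for all $x \in X$, i.e.\ precisely when $a$ is a lower bound for $\{w(x) : x \in X\}$. By the universal property of the infimum, this holds if and only if $a \leq \inf_{x \in X} w(x) = \iota(X)$. Thus both hom-sets are subsingletons which are nonempty under the identical condition $\iota(X) \geq a$, and the bijection follows. Naturality in both variables is then automatic, since a natural transformation between functors valued in subsingletons commutes with everything for lack of a second element to obstruct it.

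Equivalently, I could present the adjunction via unit and counit: the unit $\eta_X : X \to \kappa(\iota(X))$ is the unique map, which is nonexpansive because $\iota(X) = \inf_{x} w(x) \leq w(x)$; the counit $\epsilon_a : \iota(\kappa(a)) \to a$ is the identity, since $\iota(\kappa(a)) = \iota(\{*_a\}) = w(*_a) = a$. The two triangle identities then hold trivially in the posetal setting.

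I do not anticipate a genuine obstacle: the only point with any content is recognizing that nonexpansiveness of the canonical map $X \to \kappa(a)$ is exactly the lower-bound condition characterizing $\iota(X)$, which is immediate from the definition of $\iota$ as an infimum. To close, I would remark that, combined with the facts established just above that $\iota$ is strong monoidal ($\iota(X \times Y) = \iota(X) + \iota(Y)$ and $\iota(*,0) = 0$) and $\kappa$ is monoidal, this adjunction is in fact a monoidal adjunction, so the statement also holds at the monoidal level.
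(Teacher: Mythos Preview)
Your proof is correct and follows essentially the same approach as the paper: both establish the hom-set bijection by observing that $\cat{wSet}(X,\kappa(a))$ is the subsingleton which is nonempty exactly when $w(x)\geq a$ for all $x$, i.e.\ when $\iota(X)\geq a$, which is precisely the condition for $[0,\infty](\iota(X),a)$ to be nonempty. Your added remarks on naturality, the unit/counit description, and the monoidal nature of the adjunction go slightly beyond what the paper writes out, but the core argument is the same.
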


\begin{proof}
	Let $(X,w) \in \cat{wSet}$ and $a \in [0,\infty]$.
	If $w(x) \geq a$ for all $x \in X$, then there exists a unique arrow $(X,w) \rightarrow \kappa(a)$, otherwise $\cat{wSet}((X,w),\kappa(a)) = \varnothing$.
	In the former case holds if and only if $\inf_{x \in X} w(x) \geq a$, so we get a canonical bijection,
	\[
		\cat{wSet}((X,w),\kappa(a)) \cong [0,\infty](\iota(X,w),a).
	\]
\end{proof}

By standard enriched-category theory~\cite[Chapter 6]{borceux}, $\iota$ and $\kappa$ induce an adjunction, 
\[
	\begin{tikzcd}
		\cat{wCat} \arrow[rr, bend left, "\iota_{*}"] & \bot & \cat{Lawv}
		\arrow[ll, bend left, "\kappa_{*}"]
	\end{tikzcd}
\]
which we make explicit in the following definition/lemma.

\begin{deflemma}
There is a functor $i_{*}: \cat{wCat} \to \cat{Lawv}$ defined as follows.
For $(\cat{C},w_C) \in \cat{wCat}$, let $i_{*}(\cat{C},w_C)$ be the Lawvere metric space whose objects are those of $\cat{C}$.
For $X,Y \in \cat{C}$, define
\begin{equation*}
  d(X,Y) = \inf_{\gamma \in C(X,Y)} w_\cat{C}(\gamma),
\end{equation*}
where $d(X,Y) = \infty$ if $\cat{C}(X,Y) = \varnothing$.
\end{deflemma}

\begin{proof}
First we verify that $i_{*}(\cat{C},w_C)$ is a Lawvere metric space.
For all $X \in \cat{C}$, $w_C(1_X) = 0$, so it follows that $d(X,X) = \inf_{\gamma \in \cat{C}(X,X)} w_C(\gamma) = 0$.
For $X,Y,Z \in \cat{C}$,
\begin{eqnarray*}
d(Y,Z)+d(X,Y) &=& \inf_{\gamma' \in \cat{C}(Y,Z)} w_C(\gamma') + \inf_{\gamma \in \cat{C}(X,Y)} w_C(\gamma)\\
&\geq& \inf_{\gamma'' \in \cat{C}(X,Z)} w_C(\gamma'')\\
&=& d(X,Z).
\end{eqnarray*}

For a morphism $(F,\alpha): (\cat{C},w_C) \to (\cat{D},w_D) \in \cat{wCat}$, let $i_{*}(F,\alpha)$ be the functor from $i_{*}(\cat{C},w_C)$ to $i_{*}(\cat{D},w_D)$ given by $i_{*}(F,\alpha)(X) = F(X)$ for $X \in \cat{C}$.
We need to show that $i_{*}(F,\alpha)$ is nonexpansive. That is, for $X,Y \in \cat{C}$, $d(X,Y) \geq d(FX,FY)$. We can verify this as follows:
\begin{eqnarray*}
d(X,Y) &=& \inf_{\gamma \in \cat{C}(X,Y)} w_C(\gamma)\\
 &\geq& \inf_{\gamma\in \cat{C}(X,Y)} w_D(F(\gamma))\\
 &\geq& \inf_{\gamma' \in \cat{D}(FX,FY)} w_D(\gamma')\\
 &=& d(FX,FY),
\end{eqnarray*}
where the first inequality follows from the nonexpansiveness of $F$.
The identity and composition properties follow trivially.
\end{proof}



\section{Hausdorff distance as an interleaving distance}
\label{sec:hausdorff-interleaving}

In this section we show that the Hausdorff distance between subsets of a metric space (see Section~\ref{sec:hausdorff-metric} can be interpreted as an interleaving distance.

For our third definition of Hausdorff distance we need the following observation.

\begin{lem}
  If $A \subset B^r$ then for all $s \geq 0$, $A^s \subset B^{r+s}$.
\end{lem}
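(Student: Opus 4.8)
The plan is to deduce this immediately from the two containments recorded in Lemma~\ref{lem:offset} (which were proved for Lawvere metric spaces, hence apply a fortiori to the metric space $M$ here). Concretely, suppose $A \subset B^r$. Applying monotonicity of offsets, Lemma~\ref{lem:offset}(2), to this inclusion gives $A^s \subset (B^r)^s$. Then applying the composition estimate, Lemma~\ref{lem:offset}(1), gives $(B^r)^s \subset B^{r+s}$. Chaining the two yields $A^s \subset B^{r+s}$, as desired.

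For completeness one could instead argue directly, unwinding the definition of the $r$-offset: given $m \in A^s$, for each $\eps > 0$ pick $a \in A$ with $d(a,m) \le s + \eps/2$; since $a \in A \subset B^r$, pick $b \in B$ with $d(b,a) \le r + \eps/2$; then the triangle inequality gives $d(b,m) \le r + s + \eps$, and taking the infimum over $b$ and letting $\eps \to 0$ shows $m \in B^{r+s}$. But the two-line argument via Lemma~\ref{lem:offset} is cleaner and there is no real obstacle here — the statement is a routine bookkeeping lemma whose only purpose is to feed the third (interleaving-theoretic) characterization of Hausdorff distance in this appendix.

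\begin{proof}
  Assume $A \subset B^r$. By Lemma~\ref{lem:offset}(2), $A^s \subset (B^r)^s$, and by Lemma~\ref{lem:offset}(1), $(B^r)^s \subset B^{r+s}$. Hence $A^s \subset B^{r+s}$.
\end{proof}
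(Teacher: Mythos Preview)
Your proof is correct. The paper actually takes the direct $\eps$-argument you sketch in your ``for completeness'' paragraph: pick $x \in A^s$, find $a \in A$ with $d(a,x) \le s + \eps/2$, then use $a \in B^r$ to find $b \in B$ with $d(b,a) \le r + \eps/2$, and conclude via the triangle inequality. Your formal proof instead routes through Lemma~\ref{lem:offset}, which is a cleaner and more modular argument since those containments have already been established; the paper's choice simply re-runs the same $\eps/2$ bookkeeping that went into proving Lemma~\ref{lem:offset}(1). Either way the content is identical, and there is nothing to fix.
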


\begin{proof}
  Let $x \in A^s$ and let $\eps > 0$. Then there exists $a \in A$ such that $d(a,x) \leq s + \eps/2$. Since $A \subset B^r$, there exists $b \in B$ such that $d(b,a) \leq r + \eps/2$. By the triangle inequality, $d(b,x) \leq r+x+\eps$. Thus $\inf_{b\in B}d(b,x) \leq r+s$ and $x \in B^{r+s}$.
\end{proof}

Now for $a \geq 0$, let $F_A(a) = A^a$ and $F_B(a) = B^a$. Following~\cite{bdss:1}, we have the following.

\begin{defn}
  Two families of sets $F = (F(a))_{a \geq 0}$ and $G = (G(a))_{a \geq 0}$ and said to be \emph{$r$-interleaved} if for all $a \geq 0$, $F(a) \subset G(a+r)$ and $G(a) \subset F(a+r)$.
The \emph{interleaving distance} of $F$ and $G$ is given by
\begin{equation*}
  d(F,G) = \inf \{ r \geq 0 \ | \ \text{$F$ and $G$ are $r$-interleaved} \}.
\end{equation*}
\end{defn}
Combining this definition with the previous two lemmas we have the following.

\begin{cor}
  Let $A,B \subset M$. Let $F_A$ and $F_B$ be the families of offsets given by $F_A(a) = A^a$ and $F_B(a) = B^a$. Then the Hausdorff distance of $A$ and $B$ equals the interleaving distance of $F_A$ and $F_B$. That is,
  \begin{equation*}
    d_H(A,B) = d(F_A,F_B).
  \end{equation*}
\end{cor}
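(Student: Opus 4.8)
The plan is to prove the two inequalities $d(F_A,F_B) \le d_H(A,B)$ and $d_H(A,B) \le d(F_A,F_B)$ separately, in each direction translating between the ``$r$-interleaved'' condition and the offset-containment characterisation of Hausdorff distance from Lemma~\ref{lem:hausdorff-metric}.

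For $d(F_A,F_B) \le d_H(A,B)$, I would fix any $r \ge 0$ with $A \subset B^r$ and $B \subset A^r$; by Lemma~\ref{lem:hausdorff-metric} such $r$ exist and their infimum is exactly $d_H(A,B)$. Applying the offset-monotonicity lemma just proved (if $A \subset B^r$ then $A^s \subset B^{r+s}$ for all $s \ge 0$) with $s = a$ gives $F_A(a) = A^a \subset B^{a+r} = F_B(a+r)$ for every $a \ge 0$, and symmetrically $F_B(a) \subset F_A(a+r)$. Hence $F_A$ and $F_B$ are $r$-interleaved, so $d(F_A,F_B) \le r$, and taking the infimum over all such $r$ yields $d(F_A,F_B) \le d_H(A,B)$.

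For $d_H(A,B) \le d(F_A,F_B)$, suppose $F_A$ and $F_B$ are $r$-interleaved. Evaluating the interleaving condition at $a = 0$ gives $A^0 \subset B^r$ and $B^0 \subset A^r$. Since $A \subset A^0$ (because for $m \in A$ we have $\inf_{a \in A} d(a,m) \le d(m,m) = 0$) and likewise $B \subset B^0$, this yields $A \subset B^r$ and $B \subset A^r$, so $d_H(A,B) \le r$ by Lemma~\ref{lem:hausdorff-metric}. Taking the infimum over all $r$ for which $F_A$ and $F_B$ are $r$-interleaved gives $d_H(A,B) \le d(F_A,F_B)$.

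The argument is entirely routine once the two lemmas are in place, so there is no real obstacle; the only point requiring a moment's care is that $A^0$ is the closure of $A$ rather than $A$ itself, which is harmless here since only the inclusion $A \subset A^0$ (never the reverse) is used.
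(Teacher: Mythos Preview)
Your proof is correct and follows exactly the approach the paper intends: the paper states this as a corollary with no explicit proof, simply noting that it follows by ``combining this definition with the previous two lemmas'' (namely Lemma~\ref{lem:hausdorff-metric} and the offset-monotonicity lemma $A \subset B^r \Rightarrow A^s \subset B^{r+s}$). You have spelled out precisely those details, including the minor subtlety about $A \subset A^0$ in the reverse direction.
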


\begin{rmk}
  In the language of~\cite{bdss:1}, $F_A$ and $F_B$ are functors from the indexing category given by the poset $([0,\infty),\leq)$ to the category of subsets of $M$ with partial order given by inclusion. The indexing category has a (super)linear family of translations given by $\Omega_r(a) = a + r$. For details see~\cite[Sections 3.2, 3.5]{bdss:1}.
\end{rmk}

\begin{rmk}
  The Hausdorff distance between subsets of a Lawvere metric space can also be interpreted as an interleaving distance. However, since the endomorphisms ${}^s(-)$ and $(-)^s$ need not commute with each, the indexing category needs to be a continuous version of the free monoid on two generators.
\end{rmk}


\end{document}